\newtheorem{theorem}{Theorem}
\newtheorem{remark}{Remark}
\newtheorem{lemma}{Lemma}
\newtheorem{prop}{Proposition}
\newcommand\reallywidehat[1]{%
\savestack{\tmpbox}{\stretchto{%
  \scaleto{%
    \scalerel*[\widthof{\ensuremath{#1}}]{\kern.1pt\mathchar"0362\kern.1pt}%
    {\rule{0ex}{\textheight}}
  }{\textheight}%
}{2.4ex}}%
\stackon[-6.9pt]{#1}{\tmpbox}%
}
\numberwithin{equation}{section}
\DeclareMathOperator*{\argmax}{arg\,max}
\newcommand{\angflux}{f}
\newcommand{\angfluxvec}{\mathbf{f}}
\newcommand{\dx}{h}
\newcommand{\x}{\mathbf{x}}
\newcommand{\vel}{\mathbf{v}}
\newcommand{\vweight}{\omega}
\newcommand{\xset}{\Omega_{\x}}
\newcommand{\xboundary}{\partial \xset}
\newcommand{\normal}{\mathbf{n}}
\newcommand{\velset}{\mathbb{S}^{d_{\vel}-1}} 
\newcommand{\vareps}{\varepsilon}
\newcommand{\scat}{\sigma_s}
\newcommand{\absorp}{\sigma_a}
\newcommand{\total}{\sigma_t}
\newcommand{\source}{G}
\newcommand{\bc}{\angflux_{in}} 
\newcommand{\macro}{\rho}
\newcommand{\macrovec}{\boldsymbol{\macro}}
\newcommand{\pd}{K}
\newcommand{\edge}{\mathbf{e}}
\newcommand{\test}{\phi}
\newcommand{\Upwind}{\mathbf{D}}
\newcommand{\scatmat}{\mathbf{\Sigma}_s}
\newcommand{\absorpmat}{\mathbf{\Sigma}_a}
\newcommand{\rbs}{\boldsymbol{\mathcal{F}}_{RB}}
\newcommand{\orthorbsmat}{\boldsymbol{\mathcal{U}}_{RB}}
\newcommand{\reducedcoeff}{\mathbf{c}_{RB}}
\newcommand{\rbdim}{m}
\newcommand{\parameter}{\mu}
\newcommand{\mP}{\mathcal{P}}
\newcommand{\mT}{\mathcal{T}}
\newcommand{\mN}{\mathcal{N}}
\newcommand{\T}{T}
\newcommand{\mE}{\mathcal{E}}
\newcommand{\mM}{\mathcal{M}}
\newcommand{\mG}{\mathcal{G}}
\newcommand{\mathR}{\mathbb{R}}
\newcommand{\bA}{\mathbf{A}}
\newcommand{\bAr}{\hat{\bA}}  
\newcommand{\bArr}{\breve{\bA}}  
\newcommand{\bB}{\mathbf{B}}
\newcommand{\bG}{\mathbf{G}}
\newcommand{\bW}{\mathbf{W}}
\newcommand{\bQ}{\mathbf{Q}}
\newcommand{\bP}{\mathbf{P}}
\newcommand{\bR}{\mathbf{R}}
\newcommand{\bY}{\mathbf{Y}}
\newcommand{\bI}{\mathbf{I}}
\newcommand{\bd}{\mathbf{d}}
\newcommand{\bb}{\mathbf{b}}
\newcommand{\bbr}{\hat{\bb}}  
\newcommand{\bbrr}{\breve{\bb}}  
\newcommand{\rbTrial}{U_{RB}}
\newcommand{\rbsMat}{\boldsymbol{\mathcal{F}}_{RB}}
\newcommand{\dataVec}{\mathbf{b}}
\newcommand{\Uhh}{\hat{U}_h^{\pd}}
\newcommand{\massMat}{\mathbf{M}}
\newcommand{\scatmatPar}{\mathbf{\Sigma}_{s,\parameter}}
\newcommand{\absorpmatPar}{\mathbf{\Sigma}_{a,\parameter}}
\begin{document}
\title{Reduced Basis Methods for Parametric Steady-State Radiative Transfer Equation}
\author{
Kimberly Matsuda\thanks{Department of Mathematical Sciences, Rensselaer Polytechnic Institute, Troy, NY 12180, U.S.A. Email: 
{\tt matsuk2@rpi.edu}. }
\and
Yanlai Chen\thanks{Department of Mathematics, University of Massachusetts Dartmouth, North Dartmouth, MA 02747, USA. Email: {\tt{yanlai.chen@umassd.edu}}. }
\and
Yingda Cheng \thanks{Department of Mathematics, Virginia Tech,
Blacksburg, VA 24061 U.S.A.  Email: 
 {\tt yingda@vt.edu}.  
  }
\and 
Fengyan Li\thanks{Department of Mathematical Sciences, Rensselaer Polytechnic Institute, Troy, NY 12180, U.S.A. Email: 
{\tt lif@rpi.edu}.}
}

\maketitle
\abstract{The radiative transfer equation (RTE) is a fundamental mathematical model to describe physical phenomena involving the propagation of   radiation and its interactions with the host medium, and it arises in many applications. Deterministic methods can produce accurate solutions without any statistical noise, yet often at a price of expensive computational costs originating from the intrinsic high dimensionality of the model. This is more prominent in multi-query tasks, e.g., inverse problems and optimal design, when the RTE needs to be solved repeatedly. This motivates the developments of dimensionality and model order reduction techniques for such transport models.

With this work, we present the first systematic investigation  of  projection-based reduced order models (ROMs) following  the reduced basis method (RBM) framework to simulate the parametric steady-state RTE with isotropic scattering and one energy group. The use of RBM compared to standard proper orthogonal decomposition (POD)  is well motivated, especially considering that a large number of degrees of freedom is needed by full order models to solve high dimensional transport models like RTE.   Four ROMs are designed, with each defining a nested family of reduced surrogate solvers of  different resolution/fidelity. They are based on either a Galerkin or least-squares Petrov-Galerkin projection and utilize either an $L_1$ or residual-based importance/error indicator.  Two of the proposed ROMs are certified in the setting when the absorption cross section is positively bounded below uniformly. One technical focus and contribution lie in the proposed implementation strategies under the affine assumption of the parameter dependence of the model. These well-crafted broadly applicable strategies not only ensure the efficiency and accuracy of the offline training stage and the online prediction of reduced surrogate solvers, they also take into account the conditioning of the reduced systems as well as the stagnation-free residual evaluation for numerical robustness.   Computational complexities are derived for both the offline training and online prediction stages of the proposed model order reduction strategies, and they are demonstrated numerically along with the accuracy and robustness of the reduced surrogate solvers.  Numerically we observe four to six orders of magnitude speedup of our ROMs compared to full order models for some 2D2v examples.
}

\bigskip
\noindent {{\bf Key words:}  Model order reduction; reduced basis method;  least-squares Petrov-Galerkin; radiative transfer; transport model;  high dimension}

\section{Introduction}
\label{sec:intro}

The radiative transfer equation (RTE) is a fundamental mathematical model to describe physical phenomena involving the propagation of   radiation and its interactions with the host medium, and it arises in many areas of applications, such as astrophysics, medical imaging, nuclear engineering, and atmospheric science \cite{duderstadt1979transport,chandrasekhar2013radiative,rybicki2024radiative,stephens1984parameterization,klose1999iterative}. Deterministic methods directly simulate the unknown angular flux (i.e. the probability density function, after normalization) 
and can produce accurate solutions without any statistical noise as in Monte Carlo based simulations. However, they are computationally costly due to the intrinsic high dimensionality of the model, {originating} from unknown solutions being defined on a space-angle-energy phase space, possibly also depending on time. In multi-query tasks such as inverse problems, optimal design, and uncertainty quantification,
the RTE needs to be solved repeatedly, e.g., with different configurations of source, material property, or geometry.   It remains an important subject to design efficient numerical algorithms to provide accurate and robust direct solvers for the RTE and its parametric version.

Recent decades have seen active developments of dimensionality and model order reduction techniques for the RTE to mitigate and address the numerical challenge of high dimensionality.
Physics-based reduced models have a long history, including diffusion approximations found in classical textbooks, and moment methods with various closure strategies \cite{chandrasekhar1946radiative,mcclarren2010robust,huang2022machine,fan2020nonlinear}. Computation-based and data-driven strategies are more  versatile, with examples including  sparse-grid methods \cite{widmer2008sparse, guo2016sparse}, 
fast algorithms based on the integral formulation of the model defined in the physical space \cite{ren2019fast,fan2019fast}, 
tensor-based methods (e.g., proper generalized decomposition  \cite{dominesey2018reduced,dominesey2022reduced}, dynamical low rank approximations \cite{ding2021dynamical,einkemmer2021asymptotic}), 
and projection-based reduced order models.

Projection-based reduced order models (ROMs) \cite{benner2017model} are a general framework for constructing efficient surrogate solvers for (non-)parametric models. 
The framework is based on a reduced basis space (also referred to as reduced trial or reduced approximation space)  that is generated first, preferably of low dimension, to accurately approximate a (parameter-induced) solution manifold. A reduced solution is then sought from this reduced basis space through {a} Galerkin or Petrov-Galerkin projection. The efficiency of this framework is closely related to   the Kolmogorov $n$-width \cite{pinkus2012n}
of the respective solution manifold,  which measures how well the manifold can be approximated by $n$-dimensional linear spaces especially with relatively small $n$.
Proper orthogonal decomposition (POD) \cite{volkwein2011model} 
is one of the most widely used techniques to build reduced basis spaces in projection-based ROMs. Here, a problem-dependent reduced basis is generated by the method of snapshots from a sufficiently large collection of computed or physically measured samples in the solution manifold. 
 In the setting of steady-state RTE,  POD-based ROMs are developed in \cite{tencer2016reduced, tencer2017accelerated}   by treating the angular variable as a parameter in a purely absorbing medium (namely without the integral scattering operator), and in \cite{behne2022minimally, behne2023parametric} in the presence of physical parameters that describe, e.g., material properties and source.
In particular, a minimally-invasive  ROM is proposed in \cite{behne2022minimally}, and it is especially appealing when the operators in full order models (FOMs) are unavailable,  e.g. in matrix-free sweep-based FOMs. This moment-based ROM,
however,  suffers from the lifting bottleneck, in the sense that functions in the reduced basis space need to be lifted back to the full order approximation space of dimension $\mN$. To truly achieve dimensionality reduction, an intrusive angular-form based  ROM is developed in \cite{behne2023parametric}  by utilizing the affine assumption of the parameter dependence. 
A POD-based ROM is also developed in \cite{choi2021space} with an incremental space-time reduced basis construction algorithm for the time-dependent non-parametric RTE.

Reduced basis methods (RBMs) \cite{hesthaven2016certified,haasdonk2017reduced}
are another well-established framework to design projection-based ROMs for parametric models, with the main difference from the POD-based ones in their reduced basis generation. In RBM,   the reduced trial space  is  built greedily, with the reduced basis enriched in each greedy iteration by one snapshot that  corresponds to the high-fidelity full order solution at a parameter value  whose solution is approximated the least accurately,  were the current reduced basis space to be adopted to define the ROM. One signature advantage of the greedy procedure to generate reduced basis  is that only  the minimum number of full order solves is required.  Certified RBMs (e.g., based on some {\it a posteriori} error estimators)  are available for certain classes of parametric models \cite{hesthaven2016certified,haasdonk2017reduced}.   The development of RBM-based ROMs for transport models is rather limited. One line of recent work by three of the authors is to design RBM-based surrogate solvers for the steady-state \cite{peng2022reduced} and time-dependent \cite{peng2024micro} non-parametric RTE by treating the angular variable, possibly also time, as the parameter.

The goal of the present work is to further advance RBMs in simulating parametric transport models efficiently. More specifically, we design and numerically test several projection-based ROMs following the RBM framework to construct reduced surrogate solvers to efficiently simulate the parametric steady-state RTE with isotropic scattering and one energy group. In addition to efficiency and accuracy, great attention is paid to the conditioning of the reduced system matrices and the numerical robustness through the proposed implementation strategies under the affine assumption of the parameter dependence of the model. 
For our development, the high-fidelity FOM is chosen as the upwind discontinuous Galerkin discretization with piecewise $Q^K$ (with $K=1$) polynomial approximations  in space,  combined with the discrete ordinates method \cite{lewis1984computational} (i.e. the $S_N$ method) in angle, with the resulting algebraic system 
solved iteratively via the source iteration scheme accelerated by diffusion synthetic acceleration (SI-DSA, \cite{adams2002fast,peng2022reduced}, also see Appendix \ref{app:impt:cost:FOM}).
Our FOM is known to work uniformly well in a wide range of regimes as the scattering and absorption cross sections vary \cite{adams2001discontinuous,guermond2010asymptotic,sheng2021uniform}.

\subsection{Contributions and organization} Though RBM-based ROMs have been developed in \cite{peng2022reduced, peng2024micro} for the non-parametric RTE with the angular variable (and possibly also time) being treated as the parameter, or have been applied to design ROM-based preconditioners to accelerate full order solvers for the parametric RTE in \cite{peng2025flexible}, our work presents the following unique contributions.

\smallskip
\noindent {\bf The first systematic and multifaceted investigation of RBM-based ROMs for the parametric steady-state RTE.} In total, four ROMs are proposed that are based on either Galerkin projection or a least-squares Petrov-Galerkin projection, and uses either the $L_1$ error indicator \cite{chen2019robust} or the more traditional residual-based error indicator.  Our study covers  comprehensively their formulations, detailed implementation, estimation of computational complexity, the numerical  demonstration and evaluation as well as their comparison.

Several of the proposed ROMs  involve the residual of a given 
function in the full  order approximation space $U_h$, 
either through residual minimization in  \eqref{eq:rom:alg:p:PG} which is equivalent to a least-squares Petrov-Galerkin projection, or through a residual-based error indicator in \eqref{eq:res error indicator} to guide the greedy parameter selection. Unlike some other ROMs involving residuals such as POD-based Petrov-Galerkin ROMs in \cite{carlberg2011efficient,behne2022minimally, tano2021affine}, our ROMs are independent of the specific choice of the basis of the full order approximation space $U_h$ in \eqref{eq:Uh}, {because} the residual is measured as a function, instead of being measured through  their expansion coefficient vectors  with respect to a given  basis (see Remark \ref{rem:res}).

\smallskip
\noindent {\bf Robust, efficient, and stagnation-free algorithmic developments for implementation strategies.} 
Another main contribution of this work lies in the analysis-based implementation strategies in Section \ref{sec:impt:cost}. They are designed not only to fully harvest the {\it efficiency} of the proposed reduced order models, but also to ensure the reduced system matrices are reasonably conditioned while having good numerical {\it robustness} as the reduced dimension grows and the designed resolution/fidelity of the ROM increases.
By utilizing the affine assumption of the parameter dependence of the RTE model, strategies involving {\it offline-online} procedures are designed to implement several algorithmic ingredients. We 
{particularly want to} highlight the QR-based algorithms in Alg. \ref{alg:pLS:offline} - Alg. \ref{alg:ResC:1} (also see Theorems \ref{thm:why:QRp} - \ref{thm:ResC:1}) for efficient  residual minimization and residual evaluation at many instances of the parameter without the error stagnation that is often encountered as the reduced dimension grows (see Figure 2.6 in \cite{haasdonk2017reduced} and the associated discussion). Some variants to the implementation strategies in Appendix \ref{app:variants}  will provide more  insights into what is being proposed.  In contrast, the Petrov-Galerkin-based ROMs in \cite{behne2023parametric, tano2021affine} only focus on {ROM efficiency} by utilizing the affine assumption, without considering the conditioning or robustness of the implementation strategies,  which become important especially for ROMs with higher resolution/fidelity as  the reduced dimension grows.

With the greedy procedure to construct the reduced basis spaces recursively as in Alg. \ref{alg:RBM},   the RBM framework indeed generates a hierarchical family of ROMs of growing dimension and with 
{improved} resolution/fidelity, with the ROM of one dimension lower being used in the offline phase to construct the current ROM. Hence, our algorithmic advancements for implementation greatly improve the efficiency, conditioning, and robustness of the entire ROMs, including the model construction during the offline stage and the prediction and tests of the reduced order solvers during the online stage. It is important to note that these algorithmic developments for implementation are broadly applicable to ROMs for general parametric PDEs.

\smallskip
\noindent {\bf Practical speedup of four to six orders of magnitude.}  Numerically it is observed that the cost of the offline training phase to build one ROM of dimension $\rbdim$ is dominated by that of $\rbdim$ FOM solves,  evidencing the highly efficient offline training.    In  numerical experiments, three of the proposed ROMs (i.e. G-$L_1$, G-Res, PG-Res) consistently demonstrate themselves as efficient and reliable surrogate solvers of low or high resolution/fidelity to  simulate the parametric steady-state RTE  at  any parameter value, with a significant saving in computational cost over the FOM, e.g., by a speedup factor of $10^4-10^6$ in some 2D2v examples when FOM discretizations are of moderate size and when the reduced dimension is $\rbdim=15$. Such saving will be even more significant when the full dimension $\mN$ of the FOM is larger.  The fourth ROM, PG-$L_1$, with its efficiency to achieve certain resolution more sensitive to the first parameter selection during the offline greedy iterations,  can benefit from using an enhanced $L_1$ error indicator with extra but affordable offline training cost for an overall more  robust performance at the low or high resolution/fidelity.

The rest of the paper is organized as follows.
In Section \ref{sec:RTE and FOM}, we introduce the RTE as well as the FOM in both its variational form and its algebraic form. 
In Section \ref{sec:RBM}, four RBM-based ROMs are proposed, determined by their main building blocks including two projection-based reduced order solvers  and two error indicators, along with the spectral ratio stopping criterion.  Section \ref{sec:impt:cost} is devoted to the algorithmic developments to ensure efficient and robust implementation of the proposed ROMs. Computational complexities are also derived for the offline training stage to build each ROM and for the online prediction stage to compute reduced solutions. 
In Section \ref{sec:num}, we demonstrate the performance of our methods for their accuracy, efficiency, and robustness through a collection of physically relevant benchmark examples, {including 1D examples in slab geometry and 2D2v examples.}  
Concluding remarks are made in Section \ref{sec:conclusion}. {For easy reading, proofs of any lemma, proposition, and theorem are presented in Appendix, along with some variants of the algorithm and implementation strategies.}

\section{The RTE  and full order model (FOM)}
\label{sec:RTE and FOM}

Consider the steady-state RTE with isotropic scattering and one energy group,
\begin{subequations} \label{eq:kinetic transport equation}
\begin{align}
& \vel \cdot \nabla_{\x} \angflux(\x,\vel) + \total(\x) \angflux(\x,\vel) = \scat(\x) \macro(\x) + \source(\x),\quad (\x,\vel)\in \xset\times\velset,
\\
& \angflux(\x,\vel) = \bc(\x, \vel),
\
(\x, \vel)\in \Gamma_{-}(\xset). 
\end{align}
\end{subequations}
Here
$\angflux = \angflux(\x,\vel)$ is the angular flux at the spatial position $\x \in \xset\in\mathbb{R}^{d_\x}$ with the angular direction $\vel \in \velset$, $\macro=\macro(\x)=\frac{1}{|\velset|} \int_{ \velset} \angflux(\x,\vel) d\vel$ is the scalar flux (also the $0$-th order moment, or the macroscopic density), $\total=\total(\x) = \scat(\x) + \absorp(\x) >0$ is the total cross section, $\scat = \scat(\x) \geq 0$ is the scattering cross section, $\absorp = \absorp(\x) \geq 0$ is the absorption cross section, and $\source = \source(\x)$ is the isotropic source. The boundary condition is given at the inflow boundary $\Gamma_{-}(\xset)=\{(\x, \vel)\in \xboundary\times\velset, \vel \cdot \normal(\x) < 0\}$, with $\normal(\x)$ being the outward unit normal 
at $\x \in \xboundary$. In this paper, we are particularly interested in the parametric version of the RTE,
\begin{subequations} \label{eq:P-RTE}
\begin{align}
& \vel \cdot \nabla_{\x} \angflux(\x,\vel; \parameter) + \total(\x; \parameter) \angflux(\x,\vel; \parameter) = \scat(\x; \parameter) \macro(\x; \parameter) + \source(\x; \parameter),\quad (\x,\vel)\in \xset\times\velset,
\\
& \angflux(\x,\vel; \parameter) = \bc(\x, \vel; \parameter),
\
(\x, \vel)\in \Gamma_{-}(\xset).
\end{align}
\end{subequations}
The parameter $\parameter\in\mP\subset\mathR^{d}$ can be scalar or vector-valued, and it  represents one or more of the material properties (e.g. the scattering or absorption cross sections), source term,  boundary conditions and even the uncertainty of the model. The parameter set $\mP$ is compact in $\mathR^{d}$.

In the  following subsections, we will formulate a high-fidelity  FOM 
for the RTE \eqref{eq:kinetic transport equation} and derive its algebraic form for implementation.  
The FOM solutions will be regarded as the ground truth when we come to the design and the evaluation of reduced order models for the parametric RTE \eqref{eq:P-RTE}. The FOM will also be used to provide accurate snapshot solutions to build (low-dimensional)  reduced basis spaces in our ROMs.

\subsection{Angular discretization}

In the angular direction, we apply the discrete ordinates method \cite{lewis1984computational} (i.e. the $S_N$ method). To this end,
let  $\{\vel_j\}_{j=1}^{N_{\vel}}\subset \velset$ be a set of quadrature points and $\{\vweight_j\}_{j=1}^{N_{\vel}}$ be the 
scaled quadrature weights. We approximate $\angflux(\x, \vel_j)$ by $\angflux_j(\x)$, $j=1,\dots,N_{\vel}$, that satisfy
\begin{subequations} \label{eq:angular discretization}
\begin{align}
& \vel_j \cdot \nabla_{\x} \angflux_j(\x) + \total(\x) \angflux_j(\x) = \scat(\x) \sum_{k=1}^{N_\vel} \vweight_k\angflux_k(\x) + \source(\x), \quad \x\in\xset,
\\
& \angflux_j(\x) = \bc(\x,\vel_j),
\
\x \in \Gamma_{-}^j(\xset)=\{\x\in \xboundary, \vel_j \cdot \normal(\x) < 0\}.
\end{align}
\end{subequations}
When $d_{\vel} = 3$,  the following $N_\vel$-point Chebyshev-Legendre (CL) quadrature 
 on the unit sphere $\velset$ will be used,
\begin{equation}
\vel_j = (\cos(\theta_{k}) \sqrt{1-\xi_{l}^2}, \sin(\theta_{k}) \sqrt{1-\xi_l^2}, \xi_l), 
\quad
\vweight_j = \vweight^{(\theta)}_k \vweight^{(\xi)}_l, 
\quad 
1\leq k \leq N_{\theta},
\quad 
1 \leq l \leq N_{\xi},
\end{equation}
with $j=l+(k-1)N_\xi$, $N_\vel=N_\theta N_\xi$, and some positive even integer $N_\theta$. This quadrature formula, referred to as the $(N_\theta, N_\xi)$-CL quadrature,
 can be  regarded as the tensor product of the $N_\theta$-point Chebyshev quadrature 
  for the unit circle, with $\theta_k=(2k-1)\pi/N_\theta$ and {$\vweight^{(\theta)}_k=1/(2N_\theta)$}, and 
 the $N_\xi$-point Gauss-Legendre quadrature 
  on $[-1,1]$, with $\{\xi_l\}_{l=1}^{N_{\xi}}$ and $\{\vweight^{(\xi)}_l\}_{l=1}^{N_{\xi}}$ as the standard quadrature points and weights, respectively.

\subsection{Spatial discretization and FOM}

In space, we further apply the upwind  discontinuous Galerkin (DG) method \cite{adams2001discontinuous}. Without loss of generality, we present the 
discretization when $d_{\x}=2$. 
Let the spatial domain be $\xset = [x_L,x_R] \times [y_L,y_R]$, partitioned into a Cartesian mesh $\mT_h$, with a typical element $\T$. 
Related, let $\mE_h$ be the collection of the edges of all elements in $\mT_h$. In addition, we define 
\begin{equation}
\mE_{h,j}^{-} = \{\edge\in\mE_h: \edge \subset \xboundary, \vel_j \cdot \normal_\edge < 0\}, \quad \mE_{h,j}^{+} = \{\edge\in\mE_h: \edge \subset \xboundary, \vel_j \cdot \normal_\edge \geq 0\}.
\end{equation}
For an edge $\edge\subset \xboundary$,   $\normal_\edge$ represents the outward unit normal with respect to the domain $\xset$.

Associated with the mesh, we introduce a finite dimensional trial (i.e. approximation)  space
\begin{equation}
\Uhh= \{u(\x): u(\x) |_{\T} \in Q^{\pd}(\T), \; \T\in\mT_h \}, 
\end{equation}
where $Q^{\pd}(\T)$  is the set of polynomials on $\T$ with the degree up to $\pd$ in each spatial direction. 
Note that the functions in $\Uhh$ can be double-valued at interior edges. It is also helpful to define 
\begin{equation}U_h=U_h^{\pd}=\{g_h=(g_{h,1},g_{h,2},\dots,g_{h,N_{\vel}}): g_{h,j}=g_h(\cdot,\vel_j)\in\Uhh\}.
\label{eq:Uh}
\end{equation}
For each interior edge $\edge=\T\cap \T'$, with some $\T, \T'\in \mT_h$, the outward normal of $\T$ along $\edge$ is denoted as $\normal_{\edge, \T}$, and the trace of a function $\phi\in\Uhh$ along $\edge$ from the side of $T$ is denoted as $\phi|_\T$.  
 We also define the inflow boundary for an element $\T\in \mT_h$ with respect to the $j$-th angular direction $\vel_j$ as
\begin{equation}
\partial \T_{j}^{-} = \{\edge\subset \partial \T: \vel_j \cdot \normal_{\edge,\T} < 0\}.
\end{equation}

    We are now ready to write down the FOM after the discrete ordinates method is applied in angle and the upwind DG method is applied in space: we seek
 $\angflux_h=(\angflux_{\dx,1},\angflux_{\dx,2},\dots,\angflux_{\dx,N_{\vel}})\in U_h$,  with each $\angflux_{\dx,j}$ satisfying  
\begin{align} \label{eq:fom}
\sum_{T\in \mT_h}\int_{T} & \big(- (\vel_j \cdot \nabla_\x \test ) \angflux_{h,j} + \total \test \angflux_{h,j} \big) d \x
+
\sum_{\edge \in \mE_{h,j}^{+}}\int_{\edge}
\vel_j\cdot\normal_{\edge} \test\angflux_{h,j}ds\notag\\
&-
\sum_{T\in \mT_h}
\int_{\edge=T\cap T'\subset\big(\partial T_j^-\setminus  \mE_{h,j}^{-}\big)}\vel_j\cdot\normal_{\edge,T}\big(\test|_{T'}-\test|_{T}\big)\angflux_{h,j}|_{T'}ds\notag
\\
&=
\sum_{T\in \mT_h}\int_{T}
\big(\scat  \macro_h +\source\big) \test d \x
-
\sum_{\edge \in \mE_{h,j}^{-}} \int_{\edge} \bc(\cdot,\vel_j) \vel_j \cdot \normal_{\edge} \test ds,\quad \forall \test\in \Uhh,
\end{align}
where  $\macro_h(\x) = \sum_{k=1}^{N_{\vel}} \vweight_k \angflux_{h,k}(\x)$.   Equivalently, the FOM can be rewritten more compactly as follows: look for $\angflux_h\in U_h$ satisfying
\begin{equation}
a_h(\angflux_h, \test_h)=l_h(\test_h),\quad \test_h\in U_h,
\label{eq:fom:c}
\end{equation}
where the bilinear form $a_h(\cdot,\cdot)$ on $U_h\times  U_h$ and the linear form $l_h(\cdot)$ on $U_h$ are 
\begin{align}
a_h(\angflux_h, \test_h)=&
\sum_{j=1}^{N_{\vel}}\vweight_j\left\{\sum_{T\in \mT_h}\int_{T} \big(- (\vel_j \cdot \nabla_\x \test_{h,j} ) \angflux_{h,j} + \total \test_{h,j} \angflux_{h,j} \big) d \x+
\sum_{\edge \in \mE_{h,j}^{+}}\int_{\edge}
\vel_j\cdot\normal_{\edge} \test_{h,j}\angflux_{h,j}ds
\right.\notag\\
-&\left.
\sum_{T\in \mT_h}
\int_{\edge=T\cap T'\subset\big(\partial T_j^-\setminus  \mE_{h,j}^{-}\big)}\vel_j\cdot\normal_{\edge,T}\big(\test_{h,j}|_{T'}-\test_{h,j}|_{T}\big)\angflux_{h,j}|_{T'}ds
-\sum_{T\in \mT_h}\int_{T}
\scat  \macro_h  \test_{h,j} d \x \right\},\notag\\
l_h(\test_h)=&\sum_{j=1}^{N_{\vel}}\vweight_j\left\{\sum_{T\in \mT_h}\int_{T}
\source \test_{h,j} d \x
-
\sum_{\edge \in \mE_{h,j}^{-}} \int_{\edge} \bc(\cdot,\vel_j) \vel_j \cdot \normal_{\edge} \test_{h,j} ds\right\}.
\end{align}
When the FOM is applied to the parametric RTE \eqref{eq:P-RTE}, the numerical solution will be denoted as $\angflux_h(\cdot; \parameter)=\textrm{FOM}(\parameter)$.

\subsection{FOM: the algebraic form}
\label{sec:FOM:alg}

In this subsection, we will convert the FOM to its algebraic form, which will be solved iteratively by the source iteration scheme accelerated by diffusion synthetic acceleration (SI-DSA, \cite{adams2002fast,peng2022reduced}, also see Appendix \ref{app:impt:cost:FOM}).  Let $\{\phi_k(\x)\}_{k=1}^{N_\x}$ be a basis of $\Uhh$. For any function $g_h\in U_h$, we have $g_{h,j}=[\phi_1,\phi_2,\dots,\phi_{N_\x}]\mathbf{g}_j$,
with $\mathbf{g}_j\in\mathR^{N_\x}$
as the vector of the expansion coefficients. We further write $\mathbf{g}=[\mathbf{g}_1^T,\mathbf{g}_2^T,\dots,\mathbf{g}_{N_\vel}^T]^T$, referred to as the {\it coordinate vector} of $g_h$.  The matrix-vector form of the FOM \eqref{eq:fom} can be derived as
\begin{equation} \label{eq:fom:alg:j}
(\Upwind_j  + \hat{\mathbf{\Sigma}}_a) \angfluxvec_j  = \hat{\mathbf{\Sigma}}_s \big(\macrovec-\angfluxvec_j) + \dataVec_j,
\quad
\macrovec = \sum_{k=1}^{N_{\vel}} \vweight_k \angfluxvec_k,\quad
j=1,\cdots, N_{\vel},
\end{equation}
where $\Upwind_j, \hat{\mathbf{\Sigma}}_a, \hat{\mathbf{\Sigma}}_s 
\in \mathbb{R}^{N_{\x} \times N_{\x}}$, $\dataVec_j \in \mathbb{R}^{N_{\x}}$ are defined as
\begin{subequations}
\begin{align}
(\Upwind_j)_{kl}
 =&
-\sum_{T\in \mT_h}\int_{T} \left(\vel_j \cdot \nabla_\x \phi_k\right)  \phi_l   d \x
-
\sum_{T\in \mT_h}
\int_{\edge=T\cap T'\subset\big(\partial T_j^-\setminus  \mE_{h,j}^{-}\big)}\vel_j\cdot\normal_{\edge,T}\big(\phi_k|_{T'}-\phi_k|_{T}\big)\phi_l|_{T'}ds,\notag\\
&+\sum_{\edge \in \mE_{h,j}^{+}}\int_{\edge}
\vel_j\cdot\normal_{\edge} \phi_k\phi_l ds\\
(\hat{\mathbf{\Sigma}}_a)_{kl} 
 = &
\sum_{T\in\mT_h}\int_T \absorp(\x) \phi_k(\x) \phi_l(\x) d \x, \quad 
(\hat{\mathbf{\Sigma}}_s)_{kl} 
 = 
\sum_{T\in\mT_h}\int_T \scat(\x) \phi_k(\x) \phi_l(\x) d \x,  
\\
(\dataVec_j)_{k}
 = &
\sum_{T\in \mT_h}\int_{T}\source(\x) \phi_k(\x) d \x
-
\sum_{\edge \in \mE_{h,j}^{-}} \int_{\edge} \bc(\cdot,\vel_j) \vel_j \cdot \normal_{\edge} \phi_k ds.
\end{align}
\end{subequations}
For the convenience of later reference in the parametric setting, the contributions from the scattering and absorption processes are separated.   Given the local nature of $\Uhh$, we assume 
{that} each basis function $\phi_k(\x)$ is local, with its support being one mesh element $T\in\mT_h$. This will result in block-diagonal $\hat{\mathbf{\Sigma}}_a, \hat{\mathbf{\Sigma}}_s$ and a sparse 
$\Upwind_j$. 
If we further introduce $\dataVec=[\dataVec_1^T,\dataVec_2^T,\dots,\dataVec_{N_{\vel}}^T]^T$, \eqref{eq:fom:alg:j} will be written more compactly as
\begin{equation} \label{eq:fom:alg}
\mathbf{A} \angfluxvec = \dataVec,
\end{equation}
where $\bA\in\mathR^{\mN\times\mN}$ and $\dataVec\in \mathR^{\mN}$ with $\mN=N_{\x} N_{\vel}$. The system matrix $\bA$ is defined as
\begin{equation}
\label{eq:sys:mat}
\bA=\Upwind+\absorpmat+\scatmat,
    \end{equation}
 with $\Upwind=\text{diag}(\Upwind_1,\dots,\Upwind_{N_{\vel}})$, $\absorpmat=\text{diag}(\hat{\mathbf{\Sigma}}_a,\dots,\hat{\mathbf{\Sigma}}_a)$, and  
 \begin{equation}
     \scatmat=\big(\bI_{N_\vel}-[1,1,\dots,1]^T[\vweight_1,\dots,\vweight_{N_{\vel}}]\big)\otimes \hat{\mathbf{\Sigma}}_s.
     \label{eq:sys:scat}
 \end{equation} 
 Specifically, we have
\begin{equation}
\scatmat=\begin{bmatrix}
(1-\vweight_1)\hat{\mathbf{\Sigma}}_s   & -\vweight_2 \hat{\mathbf{\Sigma}}_s & \dots & -\vweight_{N_{\vel}} \hat{\mathbf{\Sigma}}_s \\
-\vweight_1 \hat{\mathbf{\Sigma}}_s &  (1-\vweight_2)\hat{\mathbf{\Sigma}}_s & \dots & -\vweight_{N_{\vel}} \hat{\mathbf{\Sigma}}_s \\
\vdots & \vdots & \vdots & \vdots \\
-\vweight_1 \hat{\mathbf{\Sigma}}_s & -\vweight_2 \hat{\mathbf{\Sigma}}_s & \dots & (1-\vweight_{N_{\vel}})\hat{\mathbf{\Sigma}}_s
\end{bmatrix}.
\end{equation}
 Here, $\bI_{N_\vel}$ is the $N_{\vel}\times N_{\vel}$ identity matrix and $\otimes$ is the Kronecker product.  The linear  system \eqref{eq:fom:alg} is essentially the algebraic representation of \eqref{eq:fom:c}.

For the discrete space $U_h$, we define  $\|g_h\|_h=\left(\sum_{j=1}^{N_\vel}{\vweight_j} \|g_{h,j}\|^2_{L_2(\xset)}\right)^{1/2}$, a ``semi-discrete'' $L_2$ norm, and the associated inner product is written  as $(\cdot, \cdot)_h$.  
This is a discrete analog of the scaled $L_2$ norm of $g\in L_2(\xset\times\velset)$, namely,  $|\velset|^{-1/2} \left(\int_{ \xset\times\velset} |g(\x,\vel)|^2d\x d\vel\right)^{1/2}$. 
For a given function $g_h\in U_h$, we further define its residual $r_h(g_h)\in U_h$ as follows,
\begin{equation}
    (r_h(g_h), \hat{g}_h)_h:=a_h(g_h, \hat{g}_h)-l_h(\hat{g}_h),\quad \forall \hat{g}_h\in U_h.
    \label{eq:res:gh}
\end{equation}
Note that $a_h(g_h, \cdot)-l_h(\cdot)$ is a linear {bounded} functional on $U_h$, measuring how much  $g_h$  does not satisfy the FOM, and  $r_h(g_h)$ is the  Riesz representation of this functional with respect to $(\cdot, \cdot)_h$.

\begin{lemma} Given $g_h\in U_h$,  the following algebraic representations hold for its $L_2$ norm $\|g_h\|_h$ and the $L_2$ norm of its residual $r_h(g_h)\in U_h$,
\begin{subequations}
\begin{align}
\|g_h\|^2_h&=
\mathbf{g}^T\textrm{diag}(\vweight_1\massMat,\vweight_2\massMat,\dots,\vweight_{N_{\vel}}\massMat)\mathbf{g},\label{lem:norm:alg.a}\\
\|r_h(g_h)\|_h^2&=(\bA\mathbf{g}-\dataVec)^T\textrm{diag}(\vweight_1\massMat^{-1},\vweight_2\massMat^{-1},\dots,\vweight_{N_{\vel}}\massMat^{-1})(\bA\mathbf{g}-\dataVec).
\label{lem:norm:alg.b}
\end{align}
\end{subequations}
Here $\massMat\in\mathR^{N_{\x}\times N_{\x}}$ is the mass matrix, and it is symmetric positive definite, with  $\int_{\xset}\phi_l\phi_k d\x$ as its  $(k,l)$ entry.
\label{lem:norm:alg}
\end{lemma}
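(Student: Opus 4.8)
The plan is to pass to coordinates with respect to the chosen local basis $\{\phi_k\}_{k=1}^{N_\x}$ of $\Uhh$ and to reduce both identities to the single mass matrix $\massMat$, which is the same in every angular block since each component $g_{h,j}$ lies in the same space $\Uhh$ with the same basis.

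First I would prove \eqref{lem:norm:alg.a}. Starting from the definition $\|g_h\|_h^2=\sum_{j=1}^{N_\vel}\vweight_j\|g_{h,j}\|_{L_2(\xset)}^2$, I expand $g_{h,j}=\sum_{k}(\mathbf{g}_j)_k\phi_k$, so that $\|g_{h,j}\|_{L_2(\xset)}^2=\sum_{k,l}(\mathbf{g}_j)_k(\mathbf{g}_j)_l\int_\xset\phi_k\phi_l\,d\x=\mathbf{g}_j^T\massMat\mathbf{g}_j$, and sum over $j$ with the weights $\vweight_j$ to assemble the block-diagonal quadratic form. The same computation applied to a pair of functions also gives the inner-product identity $(\hat g_h,\tilde g_h)_h=\hat{\mathbf{g}}^T\,\textrm{diag}(\vweight_1\massMat,\dots,\vweight_{N_\vel}\massMat)\,\tilde{\mathbf{g}}$ for all $\hat g_h,\tilde g_h\in U_h$, which I will reuse for the residual.

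For \eqref{lem:norm:alg.b}, the plan is to combine the definition of $r_h(g_h)$ as a Riesz representative with the algebraic assembly of $\bA$ and $\dataVec$. The crucial intermediate claim is that for any $\hat g_h\in U_h$ with coordinate blocks $\hat{\mathbf{g}}_j$ one has $a_h(g_h,\hat g_h)-l_h(\hat g_h)=\sum_{j=1}^{N_\vel}\vweight_j\,\hat{\mathbf{g}}_j^T(\bA\mathbf{g}-\dataVec)_j$, where $(\bA\mathbf{g}-\dataVec)_j\in\mathR^{N_\x}$ denotes the $j$-th block. This is read off from the assembly: for each fixed $j$, the system \eqref{eq:fom:alg:j} comes from testing the scalar equation \eqref{eq:fom} against the $\phi_l$'s, the weight $\vweight_j$ of the compact forms $a_h,l_h$ is shared by both sides of that $j$-th equation and was divided out of $\bA,\dataVec$, and the interdirection coupling through $\macro_h$ is carried by the block structure of $\scatmat$ in $\bA$. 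Once this is established, let $\mathbf{r}$ be the coordinate vector of $r_h(g_h)$ with blocks $\mathbf{r}_j$; equating $(r_h(g_h),\hat g_h)_h$ computed two ways — $\sum_j\vweight_j\,\mathbf{r}_j^T\massMat\hat{\mathbf{g}}_j$ from the inner-product identity and $\sum_j\vweight_j\,\hat{\mathbf{g}}_j^T(\bA\mathbf{g}-\dataVec)_j$ from \eqref{eq:res:gh} — and using the arbitrariness of each $\hat{\mathbf{g}}_j$ together with $\vweight_j>0$, gives $\massMat\mathbf{r}_j=(\bA\mathbf{g}-\dataVec)_j$, hence $\mathbf{r}_j=\massMat^{-1}(\bA\mathbf{g}-\dataVec)_j$ by invertibility of $\massMat$. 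Substituting this into \eqref{lem:norm:alg.a} applied to $r_h(g_h)$ and using symmetry of $\massMat^{-1}$ collapses $\massMat^{-1}\massMat\massMat^{-1}$ to $\massMat^{-1}$ and yields $\|r_h(g_h)\|_h^2=\sum_j\vweight_j\,(\bA\mathbf{g}-\dataVec)_j^T\massMat^{-1}(\bA\mathbf{g}-\dataVec)_j$, which is exactly \eqref{lem:norm:alg.b}.

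The main obstacle is the intermediate claim about $a_h(g_h,\hat g_h)-l_h(\hat g_h)$: one has to track carefully that the angular weights $\vweight_j$ sit inside $(\cdot,\cdot)_h$, $a_h$, and $l_h$ but were divided out when forming $\bA$ and $\dataVec$, and be careful with the per-block transpose conventions in the DG assembly (so that the functional evaluated at the test function $\phi_l$ in direction $j$ is indeed the $l$-th entry of the $j$-th block of $\bA\mathbf{g}-\dataVec$). Everything after that is mechanical linear algebra, and the $j$-independence of $\massMat$ is precisely what makes both right-hand sides block-diagonal.
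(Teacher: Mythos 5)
Your proposal is correct and follows essentially the same route as the paper's proof in Appendix B.1: coordinate expansion giving the weighted block-diagonal mass form, the algebraic identity $a_h(g_h,\hat g_h)-l_h(\hat g_h)=\hat{\mathbf{g}}^T\textrm{diag}(\vweight_1\bI_{N_\x},\dots,\vweight_{N_\vel}\bI_{N_\x})(\bA\mathbf{g}-\dataVec)$, and the resulting Riesz relation $\mathbf{r}=\textrm{diag}(\massMat^{-1},\dots,\massMat^{-1})(\bA\mathbf{g}-\dataVec)$ substituted back into \eqref{lem:norm:alg.a}. The only difference is presentational: you derive the intermediate identity blockwise from the assembly and invoke arbitrariness of the test coordinates, whereas the paper states it directly, so no gap remains.
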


{The proof of Lemma \ref{lem:norm:alg} is given in Appendix \ref{ap:lem1}. This lemma shows} that $\|g_h\|_h$ is a weighted norm of the coordinate vector $\mathbf{g}$, not simply $\|\mathbf{g}\|_{\ell_2}$. Similarly, $\|r_h(g_h)\|_h$ is a weighted norm of the algebraic residual $\bA\mathbf{g}-\dataVec$, not simply $\|\bA\mathbf{g}-\dataVec\|_{\ell_2}$.  The weighting matrices are related to the spatial and temporal discretizations of FOM, and particularly they depend on the specific choice of the basis being used for $U_h$. It is important to note that as a measurement for the residual of $g_h$, $\|r_h(g_h)\|_h$ is basis-independent, while   $\|\bA\mathbf{g}-\dataVec\|_{\ell_2}$ depends on the specific choice of basis of $U_h$.

\section{Proposed RBM-based reduced order models}
\label{sec:RBM}

In this section, we propose four \underline{reduced order models}\footnote{When needed, we distinguish the usages of the terms  {\it reduced order models} and {\it reduced order solvers}, with the former also including the reduced basis generation 
{during the} offline stage, while the latter specifically referring to 
{the} actual reduced  surrogate solvers, i.e., ROM($\rbTrial^{\rbdim};\parameter$).}, denoted as {G-$L_1$, G-Res, PG-$L_1$, PG-Res},  for efficiently simulating the parametric RTE  \eqref{eq:P-RTE} by following the  RBM framework. The standard RBM consists of an offline and online stage. In the offline training stage, a (low-dimensional) reduced basis space, also referred to as reduced trial or approximation space,   $$ \rbTrial^M=\textrm{span}\{\angflux_h(\cdot;\parameter_1), \angflux_h(\cdot;\parameter_2), \dots, \angflux_h(\cdot;\parameter_M)\}$$   is built/trained with $M\ll \mN$, where $\angflux_h(\cdot;\parameter_j)=\textrm{FOM}(\parameter_j)$ and the parameter values $\{\parameter_j\}_{j=1}^M$ are judiciously chosen via  a greedy procedure and guided by some rigorous {\it a posteriori} error estimator or efficient error indicator. 
In the online prediction stage, an approximate solution  at a given parameter value $\parameter$
 will be sought from $\rbTrial^M$ based on a \underline{reduced order solver}, ROM($\rbTrial^M;\parameter$). This reduced order solver  ROM($\rbTrial^{\rbdim};\parameter$), with $\rbTrial^{\rbdim}=\textrm{span}\{\angflux_h(\cdot;\parameter_1), \angflux_h(\cdot;\parameter_2), \dots, \angflux_h(\cdot;\parameter_{\rbdim})\}$, ${\rbdim}\leq M$, and the respective reduced solution  $\angflux_{RB}^{\rbdim}(\cdot;\parameter)$, is also used during the offline greedy iterations to build $\rbTrial^M$. Next we will start with the formulations of two reduced order solvers for the RTE model. $\rbdim$ will be referred to as the reduced dimension whose terminal value is $M$.

\subsection{Reduced order solver: ROM($\rbTrial^{\rbdim};\parameter$)}
\label{sec:ROM}

The reduced order solver ROM($\rbTrial^{\rbdim};\parameter$) will be presented in both the variational form and its algebraic form, with the latter directly related to numerical implementation. By explicitly including the dependence on the parameter, the FOM, with its variational formulation \eqref{eq:fom:c} and its algebraic formulation \eqref{eq:fom:alg}, is given as follows. 

\medskip
\noindent
\underline{FOM in its variational form:} seek  $\angflux_h(\cdot;\parameter)\in U_h$, satisfying 
\begin{equation}
a_{h,\parameter}(\angflux_h(\cdot;\parameter), \test_h)=l_{h,\parameter}(\test_h),\quad \test_h\in U_h.
\label{eq:fom:p}
\end{equation}
Related, the residual of $g_h\in U_h$, denoted as $r_{h,\parameter}(g_h)\in U_h$,  is defined as
\begin{equation}
    (r_{h,\parameter}(g_h), \hat{g}_h)_h:=a_{h,\parameter}(g_h, \hat{g}_h)-l_{h,\parameter}(\hat{g}_h),\quad \forall \hat{g}_h\in U_h.
    \label{eq:res:gh:par}
\end{equation}

\medskip
\noindent
\underline{FOM in its algebraic form:}
\begin{equation}\mathbf{A}_{\parameter} \angfluxvec_{\parameter}= \dataVec_{\parameter}, \quad \text{with}\;\; \mathbf{A}_{\parameter}\in\mathR^{\mN\times\mN},\;\; \dataVec_{\parameter}\in\mathR^{\mN}.
\label{eq:fom:alg:p}
\end{equation}
In this work, we consider two reduced order solvers: one is based on Galerkin projection, and the other is based on a least-squares Petrov-Galerkin projection. For their algebraic formulations, we first introduce the reduced matrix associated with the reduced basis space $\rbTrial^{\rbdim}$: 
\begin{equation}
    \rbsMat^{\rbdim} = [\angfluxvec_{\parameter_1}, \angfluxvec_{\parameter_2}, \dots, \angfluxvec_{\parameter_{\rbdim}}]\in \mathR^{\mN\times {\rbdim}}
\end{equation} 
whose column $\angfluxvec_{\parameter_j}$ is the coordinate vector of  
$\angflux_h(\cdot; \parameter_j)=\textrm{FOM}(\parameter_j)$. 
With the stopping criterion adopted during the greedy procedure, as discussed  in  Section \ref{sec:RBM2},   we assume $\rbsMat^{\rbdim}$ has full (column) rank, and $\rbdim<\mN$. 

To improve the ROM's conditioning, guided by \cite{haasdonk2017reduced} we will work with an orthonormal basis $\orthorbsmat^{\rbdim}$ (with respect to the standard $\ell_2$ inner product in $\mathR^{\mN}$) for the column space of $\rbsMat^{\rbdim}$. This will be obtained through the QR factorization $\rbsMat^{\rbdim}=\orthorbsmat^{\rbdim} \mathbf{R}_{RB}^{\rbdim}$, where   $(\orthorbsmat^{\rbdim})^T\orthorbsmat^{\rbdim}={\bI}_{{\rbdim}\times {\rbdim}}$ and $\mathbf{R}_{RB}^{\rbdim}\in\mathR^{\rbdim\times\rbdim}$ is upper-triangular. In our implementation, the QR factorization is computed {\it incrementally} via the Classical Gram-Schmidt with Reorthogonalization (CGSR) algorithm \cite{daniel1976reorthogonalization}, {resulting in the following nested structure when $\rbdim>1$:
\begin{equation}\orthorbsmat^{\rbdim}=[\orthorbsmat^{\rbdim-1}, \star], \quad \mathbf{R}_{RB}^{\rbdim}=\begin{bmatrix}
    \mathbf{R}_{RB}^{\rbdim-1} &\star\\
    0 &\star
    \end{bmatrix},
    \label{eq:CGSR}
    \end{equation}
    originated from $\rbsMat^{\rbdim}=[\rbsMat^{\rbdim-1}, \angfluxvec_{\parameter_{\rbdim}}]$.
    In particular, computing the last columns of $\orthorbsmat^{\rbdim}$ and $\mathbf{R}_{RB}^{\rbdim}$ is at a cost of $O(\mN)$.
It is shown numerically that the CGSR algorithm produces good quality orthonormal columns for $\orthorbsmat^{\rbdim}$. 
}

\subsubsection{ROM based on  Galerkin projection}
\label{sec:ROM-G}

Based on Galerkin projection, our first reduced order solver ROM($\rbTrial^{\rbdim};\parameter$) is defined as follows: seek a reduced solution $\angflux_{RB}^{\rbdim}(\cdot;\parameter)\in \rbTrial^{\rbdim}$, satisfying 
\begin{equation} 
a_{h,\parameter}(\angflux_{RB}^{\rbdim}(\cdot;\parameter), \test_h)=l_{h,\parameter}(\test_h),\quad \test_h\in \rbTrial^{\rbdim}.
\label{eq:rom:p:G}
\end{equation}

The reduced solution $\angflux_{RB}^m(\cdot;\parameter)$ approximates $\angflux_h(\cdot;\parameter)$, and its coordinate vector has the form $\orthorbsmat^{\rbdim} \reducedcoeff^{\rbdim}(\parameter)\approx \angfluxvec_\parameter$,  where the reduced expansion coefficients $\reducedcoeff^{\rbdim}(\parameter)\in\mathR^{\rbdim}$ solves
\begin{equation} \label{eq:rom:alg:p:G}
(\orthorbsmat^{\rbdim})^T\bA_{\parameter}\orthorbsmat^{\rbdim} \reducedcoeff^{\rbdim}(\parameter) = (\orthorbsmat^{\rbdim})^T \dataVec_{\parameter}. 
\end{equation}
For later reference, we denote the reduced system matrix and data vector in this ROM as 
\begin{equation}
\label{eq:SystemMatD-G}
\bArr_\parameter=(\orthorbsmat^{\rbdim})^T\bA_{\parameter}\orthorbsmat^{\rbdim},\quad  {\bbrr}_{\parameter}= (\orthorbsmat^{\rbdim})^T \dataVec_{\parameter},
\end{equation}
with the dependence on $\rbdim$ suppressed.

\subsubsection{ROM based on least-squares Petrov-Galerkin projection}
\label{sec:ROM-PG}

The second reduced order solver ROM($\rbTrial^{\rbdim};\parameter$) is defined based on residual minimization:  seek a reduced solution $\angflux_{RB}^{\rbdim}(\cdot;\parameter)\in \rbTrial^{\rbdim}$, satisfying 
\begin{equation}
\angflux_{RB}^{\rbdim}(\cdot;\parameter)=\textrm{argmin}_{g_h\in\rbTrial^{\rbdim}}\|r_{h,\parameter}(g_h)\|_h.
\label{eq:rom:p:PG}
\end{equation}

Using the algebraic representation of $\|r_{h,\parameter}(g_h)\|_h$ from Lemma \ref{lem:norm:alg}, and with $\reducedcoeff^{\rbdim}(\parameter)$ as the reduced expansion coefficients of $\angflux_{RB}^{\rbdim}(\cdot;\parameter)$,
the ROM in \eqref{eq:rom:p:PG}, written in its algebraic form, is indeed  a weighted least-squares problem,
\begin{equation}
\reducedcoeff^{\rbdim}(\parameter)=\textrm{argmin}_{\mathbf{c}\in\mathR^{\rbdim}}\| \bA_\parameter\orthorbsmat^{\rbdim} \mathbf{c}-\dataVec_{\parameter}\|_{\mM_h}.
\label{eq:rom:alg:p:PG}
\end{equation}
Here $\|\cdot\|_{\mM_h}$ is a weighted norm for $\mathR^{\mN}$, namely, 
$\|\mathbf{g}\|_{\mM_h}=\sqrt{\mathbf{g}^T \mM_h\mathbf{g}}$. The weighting matrix is 
\begin{equation}
\mM_h=\textrm{diag}(\vweight_1\massMat^{-1},\vweight_2\massMat^{-1},\dots,\vweight_{N_{\vel}}\massMat^{-1}),
\label{eq:Mh:weight}
\end{equation}
and it is symmetric positive definite, depending on the spatial and angular discretization of the FOM. Moreover, with the local nature of the spatial space $\Uhh$ (hence of $U_h$), $\mM_h$ is diagonal, or block-diagonal with each block being  $(\pd+1)^{d_{\x}}\times (\pd+1)^{d_{\x}}$.  In either case, it is easy to compute its Cholesky factorization $\mG$,  satisfying $\mM_h=\mG^T\mG$, preserving the same block-diagonal structure of $\mM_h$ with each block being upper-triangular or diagonal. This matrix $\mG$ will be used in Section \ref{sec:impt:cost:ROMPG} (also see Lemma \ref{lem:reform:ROMPG}) to efficiently solve \eqref{eq:rom:alg:p:PG} for a (large) collection of parameter values. For later reference (e.g., in Lemma \ref{lem:reform:ROMPG}), we denote the reduced system matrix and data vector in this ROM as \begin{equation}
\label{eq:SystemMatD-PG}
\bAr_\parameter=\mG\bA_{\parameter}\orthorbsmat^{\rbdim}, \quad  {\bbr}_{\parameter}= \mG\dataVec_{\parameter}.
\end{equation}

The ROM in \eqref{eq:rom:p:PG} (i.e. \eqref{eq:rom:alg:p:PG})  is also said to be 
 a least-squares Petrov-Galerkin (LS-Petrov-Galerkin) projection, due to its equivalent reformulation:  find $\reducedcoeff^{\rbdim}(\parameter)\in \mathR^{\rbdim}$ that satisfies
\begin{equation} \label{eq:rom:alg:abs}
\bW_{\parameter}^T\bA_{\parameter}\orthorbsmat^{\rbdim} \reducedcoeff^{\rbdim}(\parameter) = \bW_{\parameter}^T \dataVec_{\parameter}, 
\end{equation}
where 
\begin{equation} \label{eq:rom:Wh:PG}
\bW_{\parameter}=
\mM_h\bA_{\parameter}\orthorbsmat^{\rbdim}.
\end{equation}
In contrast, the ROM based on Galerkin projection in \eqref{eq:rom:alg:p:G} is defined as \eqref{eq:rom:alg:abs} yet with a different $\bW_{\parameter}$, namely, $\bW_{\parameter}=\orthorbsmat^{\rbdim}$.
The matrix $\bW_{\parameter}$ is the algebraic representation of the (reduced) test space. When $\bW_{\parameter}= \orthorbsmat^{\rbdim}$, \eqref{eq:rom:alg:abs} corresponds to Galerkin projection, otherwise, it corresponds to a Petrov-Galerkin projection.

\begin{remark} 
\label{rem:res}
An alternative and popular 
  Petrov-Galerkin-based ROM can be defined by taking $\bW_{\parameter}= \bA_{\parameter}\orthorbsmat^{\rbdim}$ in \eqref{eq:rom:alg:abs}, and   equivalently, by seeking
  \begin{equation}
\reducedcoeff^{\rbdim}(\parameter)=\textrm{argmin}_{\mathbf{c}\in\mathR^{\rbdim}}\| \bA_\parameter\orthorbsmat^{\rbdim} \mathbf{c}-\dataVec_{\parameter}\|_{\ell_2}.
\end{equation}
That is, the ROM is defined by minimizing the algebraic residual; hence, it will depend on the specific choice of the basis used for the  approximation space $\Uhh$ (hence for $U_h$). Strategies of this type have been widely used in model order reduction, including POD-based ROMs for the RTE in \cite{carlberg2011efficient,behne2022minimally, tano2021affine}. On the other hand, the ROMs involving the residual in their function form as in \eqref{eq:res:gh} and \eqref{eq:rom:p:PG}  are {\bf independent} of a specific choice of bases for $U_h$.
\end{remark}

\begin{remark}
\label{rem:PG:monotone}
    With the nested/embedded  structure of the reduced basis spaces, namely $\rbTrial^{{\rbdim}-1}\subset\rbTrial^{\rbdim}$,  the ROM solution in \eqref{eq:rom:p:PG} or \eqref{eq:rom:alg:p:PG} satisfies
\begin{align*}
&\|r_{h,\parameter}(\angflux_{RB}^{\rbdim}(\cdot;\parameter))\|_h\leq \|r_{h,\parameter}(\angflux_{RB}^{\rbdim-1}(\cdot;\parameter))\|_h\\
\Leftrightarrow&
   \| \bA_{\parameter}\orthorbsmat^{\rbdim} \reducedcoeff^{\rbdim}(\parameter) - \dataVec_{\parameter} \|_{\mM_h}\leq  \| \bA_{\parameter}\orthorbsmat^{\rbdim-1} \reducedcoeff^{\rbdim-1}(\parameter) - \dataVec_{\parameter} \|_{\mM_h}.
\end{align*}
That is,  for the reduced solution $\angflux_{RB}^{\rbdim}(\cdot;\parameter)$ related to a given $\parameter$, the $L_2$ norm  of its residual (or equivalently, the weighted algebraic residual)   is monotonically non-increasing as $\rbdim$ grows. 

\end{remark}

 \subsection{Four RBM-based reduced order models}
\label{sec:RBM2}

There are a total of four reduced order models proposed in this work. They all follow the RBM framework, with its offline training/learning stage given in Alg. \ref{alg:RBM}. (The implementation of the entire algorithm of the proposed reduced order models will be given later in Alg. \ref{alg:entireAlg} after implementation strategies are described in detail.)  Upon termination, a reduced order solver $\textrm{ROM}(\rbTrial^M; \cdot)$ is available, and will be used during the online stage for prediction or testing.  The offline  stage  starts with a sufficiently large parameter training set $\mP_{\text{train}}\subset\mP$ of a finite cardinality. Once the reduced space $\rbTrial^m$ is available, 
the next parameter value $\parameter_{m+1}$ will be chosen in a greedy fashion as the one whose FOM solution is least-represented if $\rbTrial^m$ were taken to define the reduced surrogate solver ROM($\rbTrial^m; \parameter$). To make this optimization task tractable, the greedy pick will be guided by an importance/error indicator $\Delta_{\rbdim}(\parameter)$.  
In the rest of this section, we will present two error indicators, 
 ${\bf L}_1$ and Res  for short, and the stopping criterion used in this work.

\begin{algorithm}[ht]
    \SetAlgoLined
{\bf Initialization:} With $\mu_1\in\mP_{\textrm{train}}$ arbitrarily chosen or as some average, compute $\angflux_h(\cdot; \parameter_1)=\textrm{FOM}(\parameter_1)$. Define $\rbTrial^1 = \mbox{span}\left\{\angflux_h(\cdot; \parameter_1)\right\}$.
Set
$\mP_1=\{\parameter_1\}$, and  $m=1$.\\
\While{stopping criteria not met,}{

{\it i.} For each $\parameter \in \mP_{\textrm{train}}\setminus\mP_m$,
 solve ROM($\rbTrial^m; \parameter$)
 and compute an importance/error indicator  $\Delta_{m}(\parameter)$.

 {\it ii.}  Choose $\parameter_{m+1} = 
\underset{\parameter\in{\mP_{\textrm{train}}\setminus\mP_m}}{\textrm{argmax}}
 {\Delta_{m}(\parameter)}$.
 
  {\it iii.}  Compute $\angflux_h(\cdot; \parameter_{m+1})=\textrm{FOM}(\parameter_{m+1})$. Update $\rbTrial^{m+1} =\rbTrial^m\bigoplus \{ \angflux_h(\cdot; \parameter_{m+1})\}$ and   $\mP_{m+1}:= \mP_{m}\cup\{\parameter_{m+1}\}$.
Set $m:= m+1$.
 }
{\bf{Output:}~}  Set $M=m$, and output $\rbTrial^M$ 
that defines the online $\textrm{ROM}(\rbTrial^M; \cdot).$
\medskip 
\caption{The offline stage of the RBM framework}
\label{alg:RBM}
\end{algorithm}

With the two reduced order solvers ROM($\rbTrial^{\rbdim};\parameter$) in Section \ref{sec:ROM} and the two error indicators  $\Delta_{\rbdim}(\parameter)$, {\it four}  RBM-based reduced order models are proposed in this work for the parametric RTE model, and they are denoted with the following shorthand notation:

\begin{itemize}
\item {\bf G-$L_1$}: $L_1$ importance indicator
+ ROM($\rbTrial^{\rbdim};\parameter$) via Galerkin projection;
\item {\bf PG-$L_1$}: $L_1$ importance indicator  
+ 
ROM($\rbTrial^{\rbdim};\parameter$) via LS-Petrov-Galerkin
projection;
\item {\bf G-Res}: residual-based error indicator  
+ ROM($\rbTrial^{\rbdim};\parameter$) via Galerkin projection;
\item {\bf PG-Res}: residual-based error indicator 
+  
ROM($\rbTrial^{\rbdim};\parameter$) via  LS-Petrov-Galerkin projection.
\end{itemize}

\medskip
\noindent {\bf 1.) $L_1$ error indicator.}  
The first choice of $\Delta_{\rbdim}(\parameter)$ is the $L_1$ importance/error indicator $\Delta^{(L)}_{\rbdim}(\parameter)$. This indicator is proposed in \cite{chen2019robust} and defined as 
\begin{equation} \label{eq:L1 error indicator}
\Delta_{\rbdim}^{(L)}(\parameter): = \| \tilde{\mathbf{c}}_{RB}^{\rbdim}(\parameter) \|_{\ell_1},
\end{equation}
where $\tilde{\mathbf{c}}_{RB}^{\rbdim}(\parameter)$ solves
$\rbs^{\rbdim} \tilde{\mathbf{c}}_{RB}^{\rbdim}(\parameter) = \orthorbsmat^{\rbdim} \reducedcoeff^{\rbdim}(\parameter).$    
In other words, it is the $\ell_1$-norm of the generalized coordinates vector of the reduced solution in $\rbTrial^m$ with respect to the greedily selected snapshot basis $\{\angflux_h(\cdot, \parameter_j)\}_{j=1}^\rbdim$. 
It was shown in \cite{chen2019robust} that $\tilde{\mathbf{c}}_{RB}^{\rbdim}(\parameter)$ 
represents a Lagrange interpolation basis in the parameter space, and $\Delta_{\rbdim}^{(L)}(\parameter)$  represents the corresponding Lebesgue function. The strategy of using the  $L_1$ importance indicator for the greedy selection will
control the growth of the respective Lebesgue constant.
In actual implementation, we utilize  $\rbs^{\rbdim} = \orthorbsmat^{\rbdim} \mathbf{R}_{RB}^{\rbdim}$, and compute  the indicator via
\begin{equation} \label{eq:L1 error indicator:b}
\Delta_{\rbdim}^{(L)}(\parameter)=\|(\mathbf{R}_{RB}^{\rbdim})^{-1} \reducedcoeff^{\rbdim}(\parameter)\|_{\ell_1}.\end{equation}

{This simple error indicator works well in most numerical experiments. It is observed (e.g. in Section \ref{sec:lattice}) that it may not always find the best parameter values especially during the early greedy iterations. In such a case, we propose and apply a variant, referred to as the {\it enhanced $k$-point  $L_1$ error indicator},  with $k$ as a hyper-parameter of a small natural number. This will be discussed in Appendix \ref{app:enhancedL1}.}

\medskip
\noindent {\bf 2.) Residual-based error indicator.}  
The second choice of $\Delta_{\rbdim}(\parameter)$ is the residual-based error indicator  $\Delta^{(R)}_{\rbdim}(\parameter)$, defined as the $L_2$ norm of the residual of the reduced solution,
\begin{equation} \label{eq:res error indicator}
\Delta_{\rbdim}^{(R)}(\parameter): =\|r_{h,\parameter}(\angflux_{RB}^{\rbdim}(\cdot;\parameter))\|_h= \| \mathbf{A}_\parameter \orthorbsmat^{\rbdim} \reducedcoeff^{\rbdim}(\parameter) - \dataVec_\parameter\|_{\mM_h}.
\end{equation}

When the absorption cross section is positively bounded below uniformly (in $\parameter$) by $\absorp^\star>0$ (see the example in Section \ref{sec:spatially homogeneous material}), the scaled residual $\Delta^{(R)}_{\rbdim}(\parameter)/\absorp^\star$ defines a rigorous {\it a posteriori} error estimator, hence the respective ROM is certified. This will be stated more accurately next and can be deduced from standard analysis for the upwind DG method applied to transport equations (e.g. see \cite{sheng2021uniform}) and \cite{haasdonk2017reduced}. {For completeness, the proof is outlined in Appendix \ref{ap:prop1}.}
\begin{prop}
  \label{prop:aPostErr}
  Let $\absorp^\star=\inf_{\x\in\xset,\parameter\in\mP} \absorp(\x; \parameter)$. If $\absorp^\star>0$, then 
  \begin{equation}
       \| \angflux_h(\cdot;\parameter) - \angflux_{RB}^\rbdim(\cdot;\parameter) \|_h\leq \frac{1}{\absorp^\star}\|r_{h,\parameter}(\angflux_{RB}^\rbdim(\cdot;\parameter))\|_h=\frac{\Delta^{(R)}_{\rbdim}(\parameter)}{\absorp^\star}.
       \label{eq:aPostErr}
  \end{equation} 
\end{prop}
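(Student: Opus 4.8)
The plan is to run the standard \emph{a posteriori} argument: derive an error--residual identity from Galerkin orthogonality and pair it with a coercivity (stability) estimate for $a_{h,\parameter}$ on $U_h$ that is uniform in $\parameter$ and has coercivity constant exactly $\absorp^\star$; this is the route alluded to via \cite{sheng2021uniform,haasdonk2017reduced}. First I would set $e_h=\angflux_h(\cdot;\parameter)-\angflux_{RB}^\rbdim(\cdot;\parameter)$, which lies in $U_h$ since $\rbTrial^\rbdim\subset U_h$. Subtracting the FOM identity \eqref{eq:fom:p}, valid for all $\test_h\in U_h$, from the definition \eqref{eq:res:gh:par} of the residual of $\angflux_{RB}^\rbdim(\cdot;\parameter)$ gives the error--residual relation
\[
(r_{h,\parameter}(\angflux_{RB}^\rbdim(\cdot;\parameter)),\hat g_h)_h=a_{h,\parameter}(\angflux_{RB}^\rbdim(\cdot;\parameter),\hat g_h)-l_{h,\parameter}(\hat g_h)=-\,a_{h,\parameter}(e_h,\hat g_h)\qquad\text{for all }\hat g_h\in U_h .
\]

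The core step is the coercivity bound $a_{h,\parameter}(g_h,g_h)\ge\absorp^\star\|g_h\|_h^2$ for every $g_h\in U_h$. Plugging $g_h$ into both arguments of $a_{h,\parameter}$ and integrating the advection terms by parts elementwise, the transport contribution (the volume term, the outflow-edge term on $\mE_{h,j}^{+}$, and the interior upwind jump term) collapses into a sum of nonnegative edge integrals -- this is the classical coercivity of the upwind DG transport operator (e.g. \cite{adams2001discontinuous,sheng2021uniform}). What remains is
\[
a_{h,\parameter}(g_h,g_h)\ \ge\ \sum_{j=1}^{N_\vel}\vweight_j\int_{\xset}\total\,g_{h,j}^2\,d\x\ -\ \int_{\xset}\scat\,\macro_h^2\,d\x,\qquad \macro_h=\sum_{k=1}^{N_\vel}\vweight_k g_{h,k}.
\]
Writing $\total=\scat+\absorp$, using $\scat\ge 0$ and the Cauchy--Schwarz bound $\macro_h^2=\big(\sum_k\vweight_k g_{h,k}\big)^2\le\big(\sum_k\vweight_k\big)\sum_k\vweight_k g_{h,k}^2=\sum_k\vweight_k g_{h,k}^2$ (the scaled weights satisfy $\sum_k\vweight_k=1$, as $\macro$ is a normalized angular average), the scattering terms recombine into a nonnegative remainder, leaving $a_{h,\parameter}(g_h,g_h)\ge\sum_j\vweight_j\int_{\xset}\absorp\,g_{h,j}^2\,d\x\ge\absorp^\star\|g_h\|_h^2$ by the definitions of $\|\cdot\|_h$ and $\absorp^\star$.

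Finally I would take $\hat g_h=e_h$ in the error--residual relation and apply Cauchy--Schwarz for $(\cdot,\cdot)_h$:
\[
\absorp^\star\|e_h\|_h^2\ \le\ a_{h,\parameter}(e_h,e_h)\ =\ -(r_{h,\parameter}(\angflux_{RB}^\rbdim(\cdot;\parameter)),e_h)_h\ \le\ \|r_{h,\parameter}(\angflux_{RB}^\rbdim(\cdot;\parameter))\|_h\,\|e_h\|_h ,
\]
so dividing by $\absorp^\star\|e_h\|_h$ (the case $e_h=0$ being immediate) yields $\|e_h\|_h\le\frac{1}{\absorp^\star}\|r_{h,\parameter}(\angflux_{RB}^\rbdim(\cdot;\parameter))\|_h$; the remaining equality is just the definition \eqref{eq:res error indicator} of $\Delta_\rbdim^{(R)}(\parameter)$. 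The only genuinely technical point is the coercivity step -- in particular checking that the DG transport part is nonnegative and that the isotropic-scattering terms recombine nonnegatively after the bound on $\macro_h$ -- and I do not expect difficulty there, since these are classical facts for upwind DG discretizations of transport equations; everything else is a two-line duality computation.
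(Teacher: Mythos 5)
Your proposal is correct and follows essentially the same route as the paper: coercivity of $a_{h,\parameter}$ on $U_h$ with constant $\absorp^\star$ (nonnegativity of the upwind DG transport part plus absorbing the isotropic scattering term), combined with the standard residual-based duality estimate and Cauchy--Schwarz. The only cosmetic differences are the sign convention for the error $e_h$ and that you establish the scattering bound via Cauchy--Schwarz with $\sum_k\vweight_k=1$, whereas the paper invokes the equivalent orthogonality $(\langle g_h\rangle_h,\, g_h-\langle g_h\rangle_h)_h=0$.
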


{When $\inf_{\x\in\xset,\parameter\in\mP} \absorp(\x; \parameter) = 0$, our model is no longer 
coercive with respect to $\|\cdot\|_h$. The stability of the operator will then depend on other factors (e.g. the geometry of the domain), not just on the cross sections. The  {\it a posteriori} error analysis for the FOM, which our residual-based error indicator inherits, remains to be developed, and we will leave the related  analysis and algorithm development to our future effort.}

\medskip
\noindent {\bf 3.) Spectral ratio  as stopping criterion.}
 It is known from \cite{chen2019robust} that the $L_1$ error indicator does not inform the actual errors in reduced solutions. There are also physically relevant cases with $\absorp^\star=0$ (see numerical examples in Section \ref{sec:num}). This indicates that the two  importance/error indicators  (or their scaled versions) adopted in this work generally neither certify the errors in the reduced solutions nor inform quantitatively how rich the reduced space $\rbTrial^\rbdim$ is for approximation.  Motivated by the strategy in \cite{peng2022reduced}, in Alg. \ref{alg:RBM}  we adopt the stopping criterion based on the following ``energy-type" spectral ratio $r^{(\rbdim)}$ of $\rbs^{\rbdim}$, 
\begin{equation} \label{eq:spectral ratio}
r^{(\rbdim)} = \frac{\sigma_{\rbdim}^{{(\rbdim)}}}{\sqrt{\sum_{j=1}^\rbdim \left(\sigma_j^{(\rbdim)}\right)^2}}.
\end{equation}
Here, $\sigma_1^{(\rbdim)}\geq \sigma_2^{(\rbdim)}\geq\cdots\sigma_\rbdim^{(\rbdim)}\geq 0$ are the singular values of $\rbs^{\rbdim}$. Though not being proved rigorously, it is observed that $r^{(\rbdim)}$ by our proposed methods is monotonically non-increasing. 
The reduced space $\rbTrial^\rbdim$ is deemed rich enough once $r^{(\rbdim)}$ reaches below a given error tolerance, namely $r^{(\rbdim)}\leq \text{tol}_{SRatio}$, for the first time. The greedy iteration in Alg. \ref{alg:RBM} will be terminated subsequently.  This ensures $\rbs^{\rbdim}$ $(\rbdim\leq M)$ has full rank.  The squared spectral ratio in \eqref{eq:spectral ratio} reflects the fraction of the energy introduced by the newly added snapshot $\angfluxvec_{\parameter_{\rbdim}}$ in the greedy procedure. In \cite{peng2022reduced}, the ``nuclear-type'' spectral ratio $\sigma_{\rbdim}^{{(\rbdim)}}/\left(\sum_{j=1}^\rbdim \sigma_j^{(\rbdim)}\right)$ was used.  The two definitions lead to negligible difference in the history of the spectral ratios as the reduced dimension $\rbdim$ increases.  
In actual implementation, one only needs to compute the singular values of $\rbs^{\rbdim}$, not its entire singular value decomposition.

\section{On implementation and computational complexity}
\label{sec:impt:cost}

Upon termination of the offline training/learning stage of the RBM framework in Alg. \ref{alg:RBM},  reduced order solvers $\textrm{ROM}(\rbTrial^M; \cdot)$ are available and they are defined  with respect to an $M$-dimensional reduced basis space $\rbTrial^{M}$, a much smaller approximation space than the $\mN$-dimensional $U_h$. 
Special care is needed in actual implementation to fully harvest the potential computational efficiency of these surrogate solvers  
in simulating the parametric RTE \eqref{eq:P-RTE} in multi-query settings.
This is indeed closely related to the efficient implementation of both the ROM-building offline stage and the online prediction stage, given that $\textrm{ROM}(\rbTrial^{\rbdim}; \cdot)$ with $\rbdim<M$ is used in the training stage.  This section will be dedicated to the algorithmic details and strategies to implement the major ingredients to build the reduced order solvers as described in Alg. \ref{alg:RBM}, with attention not only to the offline and online efficiency, but also to the conditioning of the sub-tasks along with numerical robustness that especially ensures stagnation-free residual evaluation.

By following the standard practice to achieve the algorithmic efficiency for multi-query tasks, we assume the parameter dependence of the RTE model \eqref{eq:P-RTE} is affine, and this is also referred to parameter separability \cite{haasdonk2017reduced}. Otherwise,  techniques such as the empirical interpolation method (EIM) 
 \cite{barrault2004empirical} will be applied first. 
With this assumption, terms such as $\scat(\x;\parameter)$, $\absorp(\x;\parameter)$, $\source(\x;\parameter)$,  $\bc(\x,\cdot;\parameter)$ are assumed to have a separable form, namely $\psi(\x;\parameter)=\sum_{q=1}^Q\theta_q(\parameter)\psi_q(\x)$, with a small or moderate $Q$. Algebraically, one equivalently has 
\begin{equation}
\scatmatPar  = \sum_{q=1}^{Q_s} \theta_q^s(\parameter) \scatmat^q,\quad
\absorpmatPar  = \sum_{q=1}^{Q_a} \theta_q^a(\parameter) \absorpmat^q,
\quad
\dataVec_{\parameter}  = \sum_{q=1}^{Q_b} \theta_q^b(\parameter) \dataVec^q,
\label{eq:sys:mat:affine}
\end{equation}
where $\{\scatmat^q\}_{q=1}^{Q_s}$ and $\{\absorpmat^q\}_{q=1}^{Q_a}$ are in $\mathR^{\mN\times\mN}$, and they are either sparse or have a special structure as discussed in Section \ref{sec:FOM:alg}, and $\{\dataVec^q\}_{q=1}^{Q_b}$ is in $\mathR^{\mN}$. They are {\it independent} of the parameter $\parameter$.
 The coefficients $\theta_q^s(\parameter), \theta_q^a(\parameter), \theta_q^b(\parameter)$ are continuous functions on the compact parameter set $\mP$.

In the offline greedy selection procedure of 
Alg. \ref{alg:RBM} or during the online prediction stage in multi-query settings, one would want to solve ROM($\rbTrial^{\rbdim};\parameter)$ and/or to compute the error indicator $ \Delta_{\rbdim}(\parameter)$ for a (large) collection of $N_{\parameter}$ parameter values in $\mP_{\textrm{interest}}\subset\mP$, with $N_{\parameter}=|\mP_{\textrm{interest}}|$.  With the affine structure of the parameter dependence being utilized, good computational efficiency can be achieved for these building blocks by embedding an additional offline-online framework.  More specifically, in the offline phase of such framework, {\it parameter-independent} pre-computation is performed, while in the online phase,  {\it parameter-dependent} evaluation or finding solutions to problems with much reduced sizes will be carried out at each parameter instance.
Such a strategy is not new for model order reduction and can be done fairly straightforwardly for the ROM($\rbTrial^{\rbdim};\cdot$) based on Galerkin projection,  while more algorithmic development is needed for that based on LS-Petrov-Galerkin projection to avoid worsened conditioning (see discussions in Section \ref{sec:spatially varying scattering}).
In addition,  when the residual $r_{h,\parameter}(\cdot)$ is involved, e.g., in  ROM($\rbTrial^{\rbdim};\parameter$) based on LS-Petrov-Galerkin projection,  or in calculating the residual-based error indicator, or even in computing  training or test   residual errors to assess the proposed methods, additional algorithmic strategy is needed for robust residual evaluation that is free from stagnation when the reduced dimension $\rbdim$ grows and the designed resolution/fidelity of the ROMs improves (see numerical experiments in Sections \ref{sec:two-material}-\ref{sec:spatially varying scattering}; also see Figure 2.6 in \cite{haasdonk2017reduced} and the associated discussion). 
{All of these will be elaborated on in Sections \ref{sec:impt:cost:ROMG}-\ref{sec:indicator-cost}, along with the computational complexity of each building block algorithm.   In Section \ref{sec:TotalCost}, we provide the implementation as well as the computational complexity of the offline and online stages of the entire algorithm of all four proposed RBM-based ROMs.} 

This section presents one major technical contribution of our work. To further provide perspective and insights to some of our proposed implementation strategies, several variants will be discussed in Appendix \ref{app:variants} and they will be  referred to in this section whenever relevant. 
These variants have their own merits, and we will point out the situations when they work well and when they may not be as robust or as efficient as the strategies proposed in this section.

\subsection{ROM($\rbTrial^{\rbdim};\parameter$) via Galerkin projection} 
\label{sec:impt:cost:ROMG}

To implement the reduced order solver ROM($\rbTrial^{\rbdim};\parameter$) based on  Galerkin projection in Section \ref{sec:ROM-G}, we first 
utilize the affine assumption and write the reduced system matrix ${\bArr}_\parameter$ and data vector ${\bbrr}_{\parameter}$ in \eqref{eq:SystemMatD-G} as follows,
\begin{subequations}
\begin{align}
{\bArr}_\parameter&=\sum_{q=1}^{Q_A} \theta_q^A(\parameter) {\bArr}^q\\
&:=
\Big((\orthorbsmat^{\rbdim})^T\Upwind\orthorbsmat^{\rbdim}\Big)+ 
\sum_{q=1}^{Q_s} \theta_q^s(\parameter) \Big((\orthorbsmat^{\rbdim})^T\scatmat^q\orthorbsmat^{\rbdim}\Big)+ \sum_{q=1}^{Q_a} \theta_q^a(\parameter) \Big((\orthorbsmat^{\rbdim})^T\absorpmat^q\orthorbsmat^{\rbdim}\Big),\notag\\
{\bbrr}_{\parameter} & = \sum_{q=1}^{Q_b} \theta_q^b(\parameter){\bbrr}^q,
\end{align}
\label{eq:A:G:p}
\end{subequations}
with the parameter-independent  $\{{\bArr}^q\}_{q=1}^{Q_A}\subset\mathR^{\rbdim\times \rbdim}$ consisting of $(\orthorbsmat^{\rbdim})^T\Upwind\orthorbsmat^{\rbdim}$, $\{(\orthorbsmat^{\rbdim})^T\scatmat^q\orthorbsmat^{\rbdim}\}_{q=1}^{Q_s}$, $\{
(\orthorbsmat^{\rbdim})^T\absorpmat^q\orthorbsmat^{\rbdim}\}_{q=1}^{Q_a}$,  $Q_A=1+Q_s+Q_a$, and 
the parameter-independent ${\bbrr}^q=(\orthorbsmat^{\rbdim})^T\dataVec^q\in\mathR^{\rbdim}$, $q=1,\cdots, Q_b$. The ROM($\rbTrial^{\rbdim};\parameter$) in  \eqref{eq:rom:alg:p:G} will become
\begin{equation}
  {\bArr}_\parameter\reducedcoeff^{\rbdim}(\parameter)={\bbrr}_{\parameter}.
    \label{eq:rom:G:sol}
\end{equation}

$\bArr_{\parameter}$ has full rank, and this property is inherited from the reduced matrix $\rbsMat^{\rbdim}$ (or its orthonormalized 
$\orthorbsmat^{\rbdim}$) being full rank, as discussed in Section \ref{sec:RBM2},  and $\bA_{\parameter}$ being invertible\footnote{This is observed to hold in  all our numerical examples, and is proved in \cite{sheng2021uniform}  when the absorption cross section is bounded below uniformly by a positive constant, namely $\absorp^\star=\inf_{\x\in\xset,\parameter\in\mP} \absorp(\x; \parameter)>0$.}.
To solve this reduced $\rbdim\times\rbdim$ problem for a (large) collection of $N_{\parameter}$ parameter values in $\mP_{\textrm{interest}}$,  we will utilize the separable structure of ${\bArr}_\parameter$ and ${\bbrr}_{\parameter}$ and follow an offline-online framework to achieve  good computational efficiency.  This is outlined in Alg. \ref{alg:pG}. {As detailed  in Appendix \ref{app:CC:alg2}, the associated computational complexity is  
\begin{equation}
  \boxed{  O\big(\mN(\rbdim^2 Q_A+\rbdim Q_b)\big)+ O\big(N_{\parameter}(\rbdim^3+\rbdim^2 Q_A+\rbdim Q_b)\big)}.
    \label{eq:CC:ROM-G}
\end{equation}
In  comparison, without the affine structure, a cost of $O(\mN N_{\parameter} \rbdim^2)$ is required. 
With the dependence on $\mN$ and  $N_{\parameter}$ in a multiplicative form, namely, $\mN N_{\parameter}$,  the cost in \eqref{eq:CC:ROM-G} is much lower in  a multi-query setting when $N_\parameter\gg 1$.}

\begin{algorithm}[ht]
\caption{To solve the Galerkin-based ROM  \eqref{eq:rom:alg:p:G} (or \eqref{eq:rom:G:sol}) for all $\parameter\in\mP_{\textrm{interest}}$}

  \begin{algorithmic}[1]  
 \State {\bf Offline:} Pre-compute  ${\bArr}^q\in\mathR^{\rbdim\times\rbdim}$, $q=1,\dots, Q_A$ and ${\bbrr}^q\in\mathR^{\rbdim}$, $q=1,\dots, Q_b$ as defined in Section \ref{sec:impt:cost:ROMG}.
\State {\bf Online:} For  each $\parameter\in\mP_{\textrm{interest}}$, 
    \begin{algorithmic}
  \State i.) Compute ${\bArr}_\parameter=\sum_{q=1}^{Q_A} \theta_q^A(\parameter) {\bArr}^q\in\mathR^{\rbdim\times\rbdim}$, and ${\bbrr}_{\parameter}  =\sum_{q=1}^{Q_b} \theta_q^b(\parameter){\bbrr}^q\in\mathR^{\rbdim}$. 
  \State ii.)  Solve \eqref{eq:rom:G:sol} to get $\reducedcoeff^{\rbdim}(\parameter)\in\mathR^{\rbdim}$.
  \end{algorithmic}
\end{algorithmic}
\label{alg:pG}
\end{algorithm}

\subsection{ROM($\rbTrial^{\rbdim};\parameter$) via LS-Petrov-Galerkin projection}
\label{sec:impt:cost:ROMPG}

To implement the reduced order solver ROM($\rbTrial^{\rbdim};\parameter$) based on  LS-Petrov-Galerkin projection,  one needs to solve a weighted parametric least-squares problem \eqref{eq:rom:alg:p:PG}. Directly solving the associated (weighted) normal equation \eqref{eq:rom:alg:abs} with \eqref{eq:rom:Wh:PG} can suffer from worsened conditioning, due to $\textrm{cond}_2\big((\mM_h\bA_{\parameter}\orthorbsmat^{\rbdim})^T \bA_{\parameter}\orthorbsmat^{\rbdim}\big) = 
\big(\textrm{cond}_2(\mG\bA_{\parameter}\orthorbsmat^{\rbdim})\big)^2$ (also see numerical example in Section \ref{sec:spatially varying scattering}).  
Instead, this reduced weighed least-squares problem \eqref{eq:rom:alg:p:PG} will be solved via QR factorization. To this end, we first reformulate \eqref{eq:rom:alg:p:PG} into a standard (i.e. non-weighted) least-squares problem, and this can be verified directly.

\begin{lemma}
\label{lem:reform:ROMPG}
The weighted least-squares problem \eqref{eq:rom:alg:p:PG} is equivalent to 
\begin{equation}
\reducedcoeff^{\rbdim}(\parameter)=\textrm{argmin}_{\mathbf{c}\in\mathR^{\rbdim}}\| {\bAr}_{\parameter} \mathbf{c}-{\bbr}_{\parameter}\|_{\ell_2},
\label{eq:rom:PG:noW}
\end{equation}
where the reduced system matrix ${\bAr}_\parameter$ and data vector ${\bbr}_{\parameter}$ in \eqref{eq:SystemMatD-PG} with the affine assumption are given as follows, 
\begin{equation}
{\bAr}_{\parameter}=\sum_{q=1}^{Q_A} \theta_q^A(\parameter) {\bAr}^q:=
\Big(\mG\Upwind\orthorbsmat^{\rbdim}\Big)+ 
\sum_{q=1}^{Q_s} \theta_q^s(\parameter) \Big(\mG\scatmat^q\orthorbsmat^{\rbdim}\Big)+ \sum_{q=1}^{Q_a} \theta_q^a(\parameter) \Big(\mG\absorpmat^q\orthorbsmat^{\rbdim}\Big),
\;\;
{\bbr}_{\parameter}  = \sum_{q=1}^{Q_b} \theta_q^b(\parameter){\bbr}^q.
\label{eq:A:PG:p}
\end{equation}
Here $\mG$ is the Cholesky  factor of the weighting matrix $\mM_h$, as defined in Section \ref{sec:ROM-PG};  $\{{\bAr}^q\}_{q=1}^{Q_A}\subset\mathR^{\mN\times \rbdim}$, and they are parameter-independent and consist of $\mG\Upwind\orthorbsmat^{\rbdim}$, $\{  \mG\scatmat^q\orthorbsmat^{\rbdim}\}_{q=1}^{Q_s}$, $\{
\mG\absorpmat^q\orthorbsmat^{\rbdim}\}_{q=1}^{Q_a}$, with $Q_A=1+Q_s+Q_a$;
$\{{\bbr}^q\}_{q=1}^{Q_b}\subset \mathR^{\mN}$ are parameter-independent, with ${\bbr}^q=\mG\dataVec^q$.
\end{lemma}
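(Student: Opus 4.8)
The plan is to verify the claimed equivalence directly by rewriting the weighted norm in \eqref{eq:rom:alg:p:PG} in terms of the Cholesky factor $\mG$. Recall that $\mM_h = \mG^T\mG$ by definition of the Cholesky factorization. Hence for any vector $\mathbf{y}\in\mathR^{\mN}$ we have $\|\mathbf{y}\|_{\mM_h}^2 = \mathbf{y}^T\mM_h\mathbf{y} = \mathbf{y}^T\mG^T\mG\mathbf{y} = (\mG\mathbf{y})^T(\mG\mathbf{y}) = \|\mG\mathbf{y}\|_{\ell_2}^2$. Applying this with $\mathbf{y}=\bA_\parameter\orthorbsmat^{\rbdim}\mathbf{c}-\dataVec_{\parameter}$ and using linearity of $\mG$ gives
\begin{equation*}
\|\bA_\parameter\orthorbsmat^{\rbdim}\mathbf{c}-\dataVec_{\parameter}\|_{\mM_h}
= \|\mG(\bA_\parameter\orthorbsmat^{\rbdim}\mathbf{c}-\dataVec_{\parameter})\|_{\ell_2}
= \|\mG\bA_\parameter\orthorbsmat^{\rbdim}\mathbf{c}-\mG\dataVec_{\parameter}\|_{\ell_2}
= \|\bAr_\parameter\mathbf{c}-\bbr_\parameter\|_{\ell_2},
\end{equation*}
where the last equality uses the notation $\bAr_\parameter=\mG\bA_\parameter\orthorbsmat^{\rbdim}$ and $\bbr_\parameter=\mG\dataVec_\parameter$ from \eqref{eq:SystemMatD-PG}. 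Since the two objective functions agree pointwise in $\mathbf{c}$, their minimizers coincide, which establishes \eqref{eq:rom:PG:noW}.

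Next I would derive the affine decomposition \eqref{eq:A:PG:p}. Starting from the system matrix splitting $\bA_\parameter = \Upwind + \absorpmatPar + \scatmatPar$ in \eqref{eq:sys:mat} (the $\Upwind$ block being parameter-independent) together with the affine expansions \eqref{eq:sys:mat:affine} of $\scatmatPar$, $\absorpmatPar$, and $\dataVec_\parameter$, I substitute into $\bAr_\parameter=\mG\bA_\parameter\orthorbsmat^{\rbdim}$ and distribute $\mG(\,\cdot\,)\orthorbsmat^{\rbdim}$ over the sum. This yields
\begin{equation*}
\bAr_\parameter = \mG\Upwind\orthorbsmat^{\rbdim} + \sum_{q=1}^{Q_s}\theta_q^s(\parameter)\,\mG\scatmat^q\orthorbsmat^{\rbdim} + \sum_{q=1}^{Q_a}\theta_q^a(\parameter)\,\mG\absorpmat^q\orthorbsmat^{\rbdim},
\end{equation*}
and defining $\{\bAr^q\}_{q=1}^{Q_A}$ to be the collection $\mG\Upwind\orthorbsmat^{\rbdim}$, $\{\mG\scatmat^q\orthorbsmat^{\rbdim}\}_{q=1}^{Q_s}$, $\{\mG\absorpmat^q\orthorbsmat^{\rbdim}\}_{q=1}^{Q_a}$ with $Q_A=1+Q_s+Q_a$ and corresponding scalar coefficients $\theta_q^A(\parameter)$ (with the leading coefficient identically $1$) gives the stated form. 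The data-vector expansion $\bbr_\parameter=\mG\dataVec_\parameter = \sum_{q=1}^{Q_b}\theta_q^b(\parameter)\,\mG\dataVec^q = \sum_{q=1}^{Q_b}\theta_q^b(\parameter)\,\bbr^q$ follows the same way. All of $\bAr^q$, $\bbr^q$ are parameter-independent since $\mG$, $\Upwind$, $\scatmat^q$, $\absorpmat^q$, $\dataVec^q$, and $\orthorbsmat^{\rbdim}$ are.

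Honestly, this statement is essentially a bookkeeping exercise and I do not anticipate a genuine obstacle; the only point requiring a moment of care is confirming that $\mG$ is well-defined, i.e., that the Cholesky factorization $\mM_h=\mG^T\mG$ exists — but this is guaranteed because $\mM_h$ in \eqref{eq:Mh:weight} is symmetric positive definite, being block-diagonal with blocks $\vweight_j\massMat^{-1}$ and $\massMat$ itself symmetric positive definite (Lemma \ref{lem:norm:alg}) with $\vweight_j>0$. One should also note that the norm-equivalence step does not require $\mG$ to be square or invertible in any essential way — only $\mM_h=\mG^T\mG$ is used — though here $\mG$ is in fact square and invertible, which is what makes the reformulation numerically advantageous as discussed after the lemma.
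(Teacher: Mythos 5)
Your proof is correct and is precisely the direct verification the paper has in mind (the paper states the lemma ``can be verified directly'' and gives no further argument): the identity $\|\mathbf{y}\|_{\mM_h}=\|\mG\mathbf{y}\|_{\ell_2}$ from $\mM_h=\mG^T\mG$, followed by substitution of the splitting $\bA_\parameter=\Upwind+\absorpmatPar+\scatmatPar$ and the affine expansions \eqref{eq:sys:mat:affine}. Your added remarks on the existence of the Cholesky factor and on what is actually needed of $\mG$ are accurate and harmless.
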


$\bAr_{\parameter}$ has full rank,  just as $\bArr_{\parameter}$ in the previous subsection.  
Suppose we have access to its (reduced or economy) QR factorization, namely ${\bAr}_{\parameter}=\widehat{\bQ}_\parameter\bR_\parameter$, with $\widehat{\bQ}_\parameter\in \mathR^{\mN\times \rbdim}$ satisfying $\widehat{\bQ}_\parameter^T\widehat{\bQ}_\parameter=\bI_{\rbdim\times \rbdim}$, and $\bR_\parameter\in \mathR^{\rbdim\times \rbdim}$ being upper triangular and invertible; then mathematically the solution to \eqref{eq:rom:PG:noW} is
\begin{equation}
\reducedcoeff^{\rbdim}(\parameter)=\bR_\parameter^{-1} \widehat{\bQ}_\parameter^T {\bbr}_\parameter.
\label{eq:rom:PG:sol}
\end{equation}
It is computationally expensive to directly compute the QR factorization of the parametric matrix $\bAr_{\parameter}$ for many parameter values and then compute the respective solution from \eqref{eq:rom:PG:sol}. 
To reduce the computational complexity,  the following offline-online procedure, described in  Alg. \ref{alg:pLS:offline}-Alg. \ref{alg:pLS:online} and inspired by  recent development for low-rank approximations of parametric matrices in \cite{kressner2024randomized}, is proposed to solve \eqref{eq:rom:PG:noW}, with the mathematical justification provided by Theorem \ref{thm:why:QRp} {and proved in Appendix \ref{ap:thm1}.} Recall $\mN\gg \rbdim$ and $Q_A, Q_b$ are moderate. Thus, it is reasonable to assume $\mN\ge \rbdim Q_A+Q_b$.

\begin{algorithm}[ht]
\caption{Offline stage to solve the parametric least-squares problem \eqref{eq:rom:alg:p:PG} (or \eqref{eq:rom:PG:noW})}
%
  \begin{algorithmic}[1]  
 \State Compute $\bAr^q, q=1,\dots Q_A$ and $\bbr^q, q=1,\dots Q_b$ as defined in Lemma \ref{lem:reform:ROMPG}.
 \State For the concatenated matrix $\bB\in\mathbb{R}^{\mN\times Q_B}$, defined as
 \begin{equation}
\bB=[\bAr^1, \bAr^2, \dots, \bAr^{Q_A}, \bbr^1,\bbr^2,\dots,\bbr^{Q_b}],\quad \textrm{with}\;\; {Q_B=\rbdim Q_A+Q_b,}
\label{eq:QRp:0}
 \end{equation}
 perform economy QR factorization with column pivoting, namely
\begin{equation}
\label{eq:QRp:1}
    \bQ\bR=\bB\bP,
\end{equation}
where $\bQ\in \mathR^{\mN\times s}$ has orthonormal columns, $ \bR\in\mathR^{s\times Q_B}$ is upper-triangular, and $\bP\in\mathR^{Q_B\times Q_B}$ is a permutation matrix. Here $\rbdim \leq s\leq Q_B$.
  \State Compute $\bY^q=\bQ^T\bAr^q \in\mathR^{s\times \rbdim}$, $q=1,2,\dots, Q_A$; Compute  $\tilde{\bb}^q=\bQ^T\bbr^q\in\mathR^{s}$, $q=1,2,\dots, Q_b$.
\end{algorithmic}
\label{alg:pLS:offline}
\end{algorithm}

\begin{algorithm}[ht]
\caption{Online stage to solve the parametric least-squares problem \eqref{eq:rom:alg:p:PG} (or \eqref{eq:rom:PG:noW}) for all $\parameter\in\mP_{\textrm{interest}}$}

For each $\parameter\in\mP_{\textrm{interest}}$, we proceed as follows to solve \eqref{eq:rom:alg:p:PG} (or \eqref{eq:rom:PG:noW}).
  \begin{algorithmic}[1]  

\State Compute $\bY_\parameter=\sum_{q=1}^{Q_A}\theta_q^A(\parameter)\bY^q\in\mathR^{s\times \rbdim}$.
 \State Compute QR factorization for the full-rank $\bY_\parameter$ and   get
\begin{equation}
\label{eq:QRp:2}
\tilde{\bQ}_\parameter\bR_\parameter=\bY_\parameter. 
\end{equation}
Here $\tilde{\bQ}_\parameter \in\mathR^{s\times \rbdim}$ has orthonormal columns, $\bR_\parameter\in  \mathR^{\rbdim\times \rbdim}$ is upper-triangular. 
 \State   Compute $\bd_\parameter=\tilde{\bQ}_\mu^T\left(\sum_{q=1}^{Q_b} \theta_q^b(\parameter) \tilde{\bb}^q\right)\in\mathR^{\rbdim}.$
 \State  Solve an upper-triangular system and get $\reducedcoeff^{\rbdim}(\parameter)=\bR_\parameter^{-1}\bd_\parameter\in\mathR^{\rbdim}.$
\end{algorithmic}
\label{alg:pLS:online}
\end{algorithm}

\begin{theorem} 
\label{thm:why:QRp}
Assume $\mN\ge Q_B=\rbdim Q_A+Q_b$. The economy QR factorization of $\bAr_{\parameter}$ is $\bAr_{\parameter}=\widehat{\bQ}_\parameter\bR_\parameter$, where $\widehat{\bQ}_\parameter=\bQ\tilde{\bQ}_\parameter\in\mathR^{\mN\times\rbdim}$, satisfying $\widehat{\bQ}_\parameter^T\widehat{\bQ}_\parameter=\bI_{\rbdim\times \rbdim}$, and $\bR_\parameter\in \mathR^{\rbdim\times \rbdim}$ being upper triangular. Here, $\bQ\in\mathR^{\mN\times s}$ is computed from \eqref{eq:QRp:1} in Alg. \ref{alg:pLS:offline} and its rank is $s$, with some $s\in[\rbdim, Q_B]$, while $\tilde{\bQ}_\mu\in\mathR^{s\times \rbdim}$ and $ \bR_\mu$ are computed from \eqref{eq:QRp:2} in Alg. \ref{alg:pLS:online}. In addition,  $\bY_\parameter$ from 
Step 1 of Alg. \ref{alg:pLS:online} has full rank. As a consequence, based on \eqref{eq:rom:PG:sol}, the solution to \eqref{eq:rom:PG:noW} is 
\begin{equation}
    \reducedcoeff^{\rbdim}(\parameter)=\bR_\parameter^{-1} (\bQ\tilde{\bQ}_\parameter)^T (\sum_{q=1}^{Q_b} \theta_q^b(\parameter){\bbr}^q)=\bR_\parameter^{-1} \tilde{\bQ}_\parameter^T \left(\sum_{q=1}^{Q_b} \theta_q^b(\parameter)(\bQ^T{\bbr}^q)\right).
    \label{eq:QRp:3}
\end{equation}
\end{theorem}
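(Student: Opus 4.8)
The plan is to verify the claimed economy QR factorization of $\bAr_\parameter$ by tracking how the offline factorization in Alg.~\ref{alg:pLS:offline} and the online factorization in Alg.~\ref{alg:pLS:online} compose, and then to confirm the full-rank assertions needed for the formula \eqref{eq:QRp:3} to make sense.

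First I would establish the key algebraic identity relating $\bAr_\parameter$ to the offline quantities. Since $\bB = [\bAr^1,\dots,\bAr^{Q_A},\bbr^1,\dots,\bbr^{Q_b}]$ and $\bQ\bR = \bB\bP$, every column of $\bB$ lies in the column space of $\bQ$; in particular each $\bAr^q$ satisfies $\bAr^q = \bQ(\bQ^T\bAr^q) = \bQ\bY^q$ (using $\bQ^T\bQ = \bI_s$ so that $\bQ\bQ^T$ acts as the identity on $\mathrm{col}(\bQ)$). Summing against the affine coefficients and using \eqref{eq:A:PG:p} gives $\bAr_\parameter = \sum_q \theta_q^A(\parameter)\bQ\bY^q = \bQ\bY_\parameter$ with $\bY_\parameter = \sum_q \theta_q^A(\parameter)\bY^q$ as in Step~1 of Alg.~\ref{alg:pLS:online}. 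Then substituting the online QR factorization $\bY_\parameter = \tilde\bQ_\parameter\bR_\parameter$ yields $\bAr_\parameter = \bQ\tilde\bQ_\parameter\bR_\parameter = \widehat\bQ_\parameter\bR_\parameter$. It remains to check $\widehat\bQ_\parameter^T\widehat\bQ_\parameter = \bI_\rbdim$, which is immediate: $(\bQ\tilde\bQ_\parameter)^T(\bQ\tilde\bQ_\parameter) = \tilde\bQ_\parameter^T(\bQ^T\bQ)\tilde\bQ_\parameter = \tilde\bQ_\parameter^T\tilde\bQ_\parameter = \bI_\rbdim$. Since $\bR_\parameter$ is upper-triangular by construction of the online QR step, this is a genuine economy QR factorization of $\bAr_\parameter$.

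Next I would address the full-rank claims, which is where the real content lies. We are told (from Section~\ref{sec:RBM2} and the footnote in Section~\ref{sec:impt:cost:ROMG}) that $\bAr_\parameter \in \mathbb{R}^{\mN\times\rbdim}$ has full column rank $\rbdim$. Because $\bAr_\parameter = \bQ\bY_\parameter$ and $\bQ$ has orthonormal (hence linearly independent) columns, $\mathrm{rank}(\bY_\parameter) = \mathrm{rank}(\bQ\bY_\parameter) = \mathrm{rank}(\bAr_\parameter) = \rbdim$, so $\bY_\parameter \in \mathbb{R}^{s\times\rbdim}$ has full column rank — this justifies taking its economy QR factorization in Step~2 and guarantees $\bR_\parameter$ is invertible. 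The bound $s \in [\rbdim, Q_B]$ follows from the column-pivoted QR of $\bB$: $s = \mathrm{rank}(\bB) \le \min(\mN, Q_B) = Q_B$ under the assumption $\mN \ge Q_B$, and $s \ge \rbdim$ because $\mathrm{col}(\bAr^1) \subseteq \mathrm{col}(\bB)$ has dimension $\rbdim$ (as $\bAr^1 = \mG\Upwind\orthorbsmat^\rbdim$ has full column rank — $\mG$ is invertible, $\Upwind$ is the block-diagonal upwind matrix which should be nonsingular, and $\orthorbsmat^\rbdim$ has orthonormal columns; alternatively one can invoke that $\bAr_\parameter = \bQ\bY_\parameter$ already forces $s \ge \rbdim$).

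Finally, having the economy QR factorization in hand, the solution formula follows from \eqref{eq:rom:PG:sol}: $\reducedcoeff^\rbdim(\parameter) = \bR_\parameter^{-1}\widehat\bQ_\parameter^T\bbr_\parameter = \bR_\parameter^{-1}(\bQ\tilde\bQ_\parameter)^T(\sum_q\theta_q^b(\parameter)\bbr^q)$, and pushing $\bQ^T$ inside the sum gives $\bR_\parameter^{-1}\tilde\bQ_\parameter^T(\sum_q\theta_q^b(\parameter)(\bQ^T\bbr^q)) = \bR_\parameter^{-1}\tilde\bQ_\parameter^T(\sum_q\theta_q^b(\parameter)\tilde\bb^q)$, matching both \eqref{eq:QRp:3} and Steps~3--4 of Alg.~\ref{alg:pLS:online}. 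I expect the main obstacle to be the careful bookkeeping of the full-rank chain — specifically arguing cleanly that $\bQ\bQ^T$ fixes the columns of $\bB$ (equivalently that column-pivoted QR captures the full column space of $\bB$ even when $\bB$ is rank-deficient, which is exactly what pivoting buys us) and that rank is preserved under left-multiplication by the orthonormal-column matrix $\bQ$; everything else is routine manipulation of orthogonal projectors and upper-triangular systems.
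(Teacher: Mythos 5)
Your proposal is correct and follows essentially the same route as the paper's proof: the key step in both is $\mathrm{Range}(\bAr_\parameter)\subset\mathrm{Range}(\bB)=\mathrm{Range}(\bQ)$, giving $\bAr_\parameter=\bQ\bY_\parameter=\bQ\tilde{\bQ}_\parameter\bR_\parameter$, with orthonormality of $\widehat{\bQ}_\parameter$ and full rank of $\bY_\parameter$ verified exactly as the paper does (your rank-preservation argument under left multiplication by $\bQ$ is equivalent to the paper's null-space inclusion $\mathrm{Null}(\bY_\parameter)\subset\mathrm{Null}(\bAr_\parameter)$). The only caveat is your primary argument for $s\ge\rbdim$ via nonsingularity of $\Upwind$, which the paper never asserts; your fallback ($\rbdim=\mathrm{rank}(\bAr_\parameter)\le\mathrm{rank}(\bQ)=s$) is the paper's argument and suffices.
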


{As detailed in Appendix \ref{app:CC:alg34}, the computational complexity of Alg. \ref{alg:pLS:offline}-Alg. \ref{alg:pLS:online}  is 
\begin{equation}
\boxed{O\big(\mN (m^2Q_A^2+Q_b^2)\big)+O\big(N_{\parameter} (\rbdim^3 Q_A+\rbdim^2 (Q_A^2+Q_b)+Q_b^2\big)}.
    \label{eq:CC:ROM-PG}
\end{equation}
In comparison, without the affine structure, a cost of $O(\mN N_{\parameter} \rbdim^2)$ is required. Again, our proposed strategy Alg. \ref{alg:pLS:offline}-Alg. \ref{alg:pLS:online} is more efficient in a multi-query setting when $N_\parameter\gg 1$.}

\begin{remark} At a first glance, it seems more natural to define $\textbf{B}$ in \eqref{eq:QRp:0} without the data vectors, namely, to define $\bB=[\bAr^1, \bAr^2, \dots, \bAr^{Q_A}]$ just as in \cite{kressner2024randomized}. This is largely true. The data vectors  are included in $\textbf{B}$ in our proposed strategy, due to some extra consideration (e.g., for robust residual evaluation) of the overall algorithm.  This will be further elaborated on in Appendix \ref{app:variants:1} where a variant Alg. \ref{alg:pLS:offline}$^\prime$-Alg. \ref{alg:pLS:online} of our implementation strategy is discussed.
\end{remark}

\begin{remark}
To ensure good computational efficiency in solving \eqref{eq:rom:PG:noW} for many parameter values,  it is important that one never directly forms or calculates $\widehat{\bQ}_{\parameter}$ in the proposed procedure above, as this matrix depends on the full order dimension $\mN$ and is of size $\mN\times\rbdim$.
\end{remark}

\subsection{Evaluations of error indicators and residual}
\label{sec:indicator-cost}

We now turn to the task of computing error indicators  for a (large) collection of  $N_\parameter$ parameter values. 
A closely related task is to evaluate the residual 
$\|\bA_{\parameter} \orthorbsmat^{\rbdim}\mathbf{c} - \dataVec_\parameter\|_{\mM_h}=\| {\bAr}_{\parameter} \mathbf{c}-{\bbr}_{\parameter}\|_{\ell_2}$ with $\mathbf{c}=\reducedcoeff^{\rbdim}(\parameter)$ repeatedly, arising either from the computation of residual-based error indicators, or from the evaluation of training and test residual errors to demonstrate the proposed methods.

\medskip
\noindent {\bf 1.) $L_1$ error indicator $\Delta^{(L)}_{\rbdim}(\parameter)$.}  
Recall $\Delta_{\rbdim}^{(L)}(\parameter)=\|(\mathbf{R}_{RB}^{\rbdim})^{-1} \reducedcoeff^{\rbdim}(\parameter)\|_{\ell_1}$ in \eqref{eq:L1 error indicator:b}, with the upper-triangular $\mathbf{R}_{RB}^{\rbdim}$ and the reduced expansion coefficient vector $\reducedcoeff^{\rbdim}(\parameter)$  available as outlined in Alg. \ref{alg:RBM}, and  $(\mathbf{R}_{RB}^{\rbdim})^{-1} \reducedcoeff^{\rbdim}(\parameter)$ solved by back substitution with exactly $m^2$ basic operations. 
The total cost of computing $\Delta_{\rbdim}^{(L)}(\parameter)$ at $N_\parameter$ parameter values is hence 
\begin{equation} \label{eq:L1 error indicator cost}
\boxed{O(N_{\parameter} (\rbdim^2 + \rbdim))=O(N_{\parameter} \rbdim^2)}.
\end{equation}
\noindent {\bf 2.)  Residual $\|\bA_{\parameter} \orthorbsmat^{\rbdim}\mathbf{c} - \dataVec_\parameter\|_{\mM_h}=\| {\bAr}_{\parameter} \mathbf{c}-{\bbr}_{\parameter}\|_{\ell_2}$ with $\mathbf{c}=\reducedcoeff^{\rbdim}(\parameter)$ at $\parameter$}.  This is also the  residual-based error indicator  $\Delta^{(R)}_{\rbdim}(\parameter)$.
    In order to compute $\| {\bAr}_{\parameter} \mathbf{c}-{\bbr}_{\parameter}\|_{\ell_2}$ with $\mathbf{c}=\reducedcoeff^{\rbdim}(\parameter)$ at a (large) collection of $\parameter$
    both efficiently and robustly, Alg. \ref{alg:ResC:1}  is proposed\footnote{Alg. \ref{alg:ResC:1} can be used to evaluate $\| {\bAr}_{\parameter} \mathbf{c}-{\bbr}_{\parameter}\|_{\ell_2}$ at any $(\parameter, \mathbf{c})$ pair, not just the case with $\mathbf{c}=\reducedcoeff^{\rbdim}(\parameter)$.},  with the mathematical justification  provided in Theorem \ref{thm:ResC:1}  {and proved in Appendix \ref{ap:thm2}.}  This algorithm is based on QR factorization, in a similar spirit as Alg. \ref{alg:pLS:offline}-Alg. \ref{alg:pLS:online} yet with additional consideration to improve the numerical robustness and efficiency, also see  Appendix \ref{app:variants:2}-\ref{app:variant:3} where some related variants are discussed.

\begin{theorem} 
\label{thm:ResC:1} 
Using the same notation as in  Lemma \ref{lem:reform:ROMPG} and Alg. \ref{alg:pLS:offline}, and denoting
\begin{equation}
    \boldsymbol{\theta}^A(\parameter)=[\theta_1^A(\parameter),\theta_2^A(\parameter),\dots,\theta_{Q_A}^A(\parameter)]^T, \quad \boldsymbol{\theta}^b(\parameter)=[\theta_1^b(\parameter),\theta_2^b(\parameter),\dots,\theta_{Q_b}^b(\parameter)]^T,
    \label{eq:ResC:1}
\end{equation} 
the following holds
\begin{equation} 
\|\bA_{\parameter} \orthorbsmat^{\rbdim}\mathbf{c} - \dataVec_\parameter\|_{\mM_h}=\| {\bAr}_{\parameter} \mathbf{c}-{\bbr}_{\parameter}\|_{\ell_2}=\left\|\bR \bP^T\begin{bmatrix}\boldsymbol{\theta}^A(\parameter)\otimes \mathbf{c}\\
-\boldsymbol{\theta}^b(\parameter)
\end{bmatrix}\right\|_{\ell_2}.
\label{eq:ResC:0}
    \end{equation}
\end{theorem}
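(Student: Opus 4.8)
The plan is to trace through the offline-online QR construction of Alg.~\ref{alg:pLS:offline} and express the residual entirely in terms of the parameter-independent factor $\bR\bP^T$. First I would start from the affine expansion in Lemma~\ref{lem:reform:ROMPG}, writing $\bAr_\parameter = \sum_{q=1}^{Q_A}\theta_q^A(\parameter)\bAr^q$ and $\bbr_\parameter = \sum_{q=1}^{Q_b}\theta_q^b(\parameter)\bbr^q$. The key observation is that the vector $\bAr_\parameter\mathbf{c} - \bbr_\parameter \in \mathR^{\mN}$ is a linear combination of the columns of the concatenated matrix $\bB = [\bAr^1,\dots,\bAr^{Q_A},\bbr^1,\dots,\bbr^{Q_b}]$; specifically, $\bAr_\parameter\mathbf{c} - \bbr_\parameter = \bB\begin{bmatrix}\boldsymbol{\theta}^A(\parameter)\otimes\mathbf{c}\\ -\boldsymbol{\theta}^b(\parameter)\end{bmatrix}$, since $\bAr^q\mathbf{c}$ picks out the appropriate block of columns of $\bB$ scaled by the entries of $\mathbf{c}$ (this is exactly what the Kronecker product $\boldsymbol{\theta}^A(\parameter)\otimes\mathbf{c}$ encodes), and the $\bbr^q$ blocks contribute $-\boldsymbol{\theta}^b(\parameter)$.

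Next I would substitute the column-pivoted QR factorization $\bB\bP = \bQ\bR$ from \eqref{eq:QRp:1}, equivalently $\bB = \bQ\bR\bP^T$. Then
\begin{equation}
\bAr_\parameter\mathbf{c} - \bbr_\parameter = \bQ\bR\bP^T\begin{bmatrix}\boldsymbol{\theta}^A(\parameter)\otimes\mathbf{c}\\ -\boldsymbol{\theta}^b(\parameter)\end{bmatrix}.
\end{equation}
Taking the $\ell_2$ norm and using that $\bQ$ has orthonormal columns, so that $\|\bQ\mathbf{z}\|_{\ell_2} = \|\mathbf{z}\|_{\ell_2}$ for any $\mathbf{z}\in\mathR^s$, immediately yields $\|\bAr_\parameter\mathbf{c} - \bbr_\parameter\|_{\ell_2} = \|\bR\bP^T[\boldsymbol{\theta}^A(\parameter)\otimes\mathbf{c};\, -\boldsymbol{\theta}^b(\parameter)]\|_{\ell_2}$, which is the claimed identity. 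The leftmost equality $\|\bA_\parameter\orthorbsmat^{\rbdim}\mathbf{c} - \dataVec_\parameter\|_{\mM_h} = \|\bAr_\parameter\mathbf{c} - \bbr_\parameter\|_{\ell_2}$ is just Lemma~\ref{lem:reform:ROMPG} (the reformulation from the $\mM_h$-weighted norm to the standard $\ell_2$ norm via the Cholesky factor $\mG$, since $\bAr_\parameter = \mG\bA_\parameter\orthorbsmat^{\rbdim}$ and $\bbr_\parameter = \mG\dataVec_\parameter$), so nothing new is needed there.

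The only mildly delicate step — and the one I'd be most careful about — is verifying the block-structured identity $\bAr_\parameter\mathbf{c} - \bbr_\parameter = \bB[\boldsymbol{\theta}^A(\parameter)\otimes\mathbf{c};\, -\boldsymbol{\theta}^b(\parameter)]$, i.e. confirming that the ordering of blocks in $\bB$ from \eqref{eq:QRp:0} matches the ordering induced by the Kronecker product $\boldsymbol{\theta}^A(\parameter)\otimes\mathbf{c}$. Concretely, $\bB\,(\boldsymbol{\theta}^A(\parameter)\otimes\mathbf{c}) = \sum_{q=1}^{Q_A}\theta_q^A(\parameter)\,\bAr^q\mathbf{c}$ because the $q$-th block of $\boldsymbol{\theta}^A(\parameter)\otimes\mathbf{c}$ is $\theta_q^A(\parameter)\mathbf{c}$ and it multiplies the $q$-th block $\bAr^q$ of $\bB$; this reproduces $\bAr_\parameter\mathbf{c}$, and the trailing $-\boldsymbol{\theta}^b(\parameter)$ against $[\bbr^1,\dots,\bbr^{Q_b}]$ reproduces $-\bbr_\parameter$. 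This is a routine bookkeeping check once the conventions are pinned down, so the theorem follows with essentially no hard analysis — the substance is entirely in the setup of Alg.~\ref{alg:pLS:offline}, and the proof merely records why that setup is correct.
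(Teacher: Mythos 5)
Your proposal is correct and follows essentially the same route as the paper's proof: rewrite $\bAr_{\parameter}\mathbf{c}-\bbr_{\parameter}$ as $\bB$ applied to the stacked vector $[\boldsymbol{\theta}^A(\parameter)\otimes\mathbf{c};\,-\boldsymbol{\theta}^b(\parameter)]$ via the affine expansion, substitute the column-pivoted QR factorization $\bB\bP=\bQ\bR$ (i.e. $\bB=\bQ\bR\bP^T$), and drop $\bQ$ using the orthonormality of its columns, with the leftmost equality coming from Lemma \ref{lem:reform:ROMPG}. The Kronecker bookkeeping you flag as the delicate step is exactly the identity the paper invokes, so nothing is missing.
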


\begin{algorithm}[ht]
\caption{To compute the residual $\| {\bAr}_{\parameter} \mathbf{c}-{\bbr}_{\parameter}\|_{\ell_2}$ with $\mathbf{c}=\reducedcoeff^{\rbdim}(\parameter)$ for all $\parameter\in\mP_{\textrm{interest}}$}
{\bf Offline stage:}
Following the same notation as in Alg. \ref{alg:pLS:offline}  
and Theorem  \ref{thm:ResC:1},
  \begin{algorithmic}[1]  
 \State If the residual computation is for either PG-$L_1$ or PG-Res, go to next step; if it is for  G-$L_1$ or G-Res, perform Steps 1-2 of Alg. \ref{alg:pLS:offline}. 
 \State Compute $\bG=\bR \bP^T\in\mathR^{s\times Q_B}$.
\end{algorithmic}
{\bf Online stage:} For each $\parameter\in\mP_{\textrm{interest}}$ with $\mathbf{c}=\reducedcoeff^{\rbdim}(\parameter)$,
 \begin{algorithmic}[1]  
 \State Compute $\left\|\bG\begin{bmatrix}\boldsymbol{\theta}^A(\parameter)\otimes \mathbf{c}\\
-\boldsymbol{\theta}^b(\parameter)
\end{bmatrix}\right\|_{\ell_2}$, and this gives
the residual $\| {\bAr}_{\parameter} \mathbf{c}-{\bbr}_{\parameter}\|_{\ell_2}$.
\end{algorithmic}
\label{alg:ResC:1}
\end{algorithm}

By similar procedures as in Appendix \ref{app:CC}  to count the cost and $s\leq Q_B=\rbdim Q_A+Q_b$, one can come up with the computational complexity\footnote{Caution is needed to interpret the cost. For example, if one builds the reduced order model \textrm{G}-$L_1$ without the need to compute the training residual errors, Alg. \ref{alg:ResC:1} will  not be needed.} of Alg. \ref{alg:ResC:1} to evaluate the residual $\| {\bAr}_{\parameter} \mathbf{c}-{\bbr}_{\parameter}\|_{\ell_2}$ with $\mathbf{c}=\reducedcoeff^{\rbdim}(\parameter)$ for $N_\parameter$ parameter values in $\mP_{\textrm{interest}}$,  equivalently, at $N_\parameter$ instances of $(\mu, \mathbf{c})$.
\begin{subequations}
 \label{eq:CC:ResC}
\begin{align}
        &\boxed{O\big((\mN+N_{\parameter}) (\rbdim^2 Q_A^2 + Q_b^2)\big)}
        &\textrm{if this is for G-$L_1$ or G-Res,}\label{eq:CC:ResC:G}\\
    &\boxed{O\big(N_{\parameter} (\rbdim^2 Q_A^2 + Q_b^2)\big)}&\textrm{if this is for PG-$L_1$ or PG-Res.}\label{eq:CC:ResC:PG}
\end{align}
\end{subequations}

\begin{remark}
\label{rem:cost:2indicators}
With a more detailed counting, one can see that the number of basic operations ($+, -, *, \div,\sqrt{\cdot}$) required to evaluate one $L_1$  error indicator $\Delta^{(L)}_{\rbdim}(\cdot)$ is always no bigger than that for one residual-based error indicator $\Delta^{(R)}_{\rbdim}(\cdot)$. For the former, it is $m^2+2m$, while for the latter, the online stage of Alg. \ref{alg:ResC:1} alone will require at least $s\times(2Q_B-1)$ operations, and 
$$s\times(2Q_B-1)=s(2(mQ_A+Q_b)-1)\geq m(2(m+1)-1)=2m^2+m\geq m^2+2m.$$ 
\end{remark}

\subsection{The entire algorithm  and  computational complexity}
\label{sec:TotalCost}

In Alg. \ref{alg:entireAlg}, the implementation of the entire algorithm of all four RBM-based ROMs is summarized. Based on this, the computational complexities will be assembled and discussed for the offline stage and the online stage of our proposed reduced order models.

\begin{algorithm}[!ht]  
\caption{Implementation of the entire algorithm: RBM-based ROM}
\label{alg:entireAlg}
\SetAlgoLined

{\bf Offline stage:} \Comment{to build/train ROM($\rbTrial^{M};\parameter$)}

\smallskip
\Indp
{\bf Input:}  Maximum number of greedy iterations $M_{\textrm{tol}}$; tolerance for spectral ratio $\text{tol}_{SRatio}$;  training set  $\mP_{\textrm{train}}$

\smallskip
 {\bf Initialization:} With $\mu_1\in\mP_{\textrm{train}}$ arbitrarily chosen or as some average,
        \\
        \Indp i.) Solve $\textrm{FOM}(\parameter_1)$ to obtain $\angfluxvec_{\parameter_1}$.
        \\
        ii.) Form $\rbsMat^1 = [\angfluxvec_{\parameter_1}]$ and its normalized version $\orthorbsmat^1$ satisfying $\rbsMat^1 = \orthorbsmat^1 \mathbf{R}_{RB}^1$ with $\mathbf{R}_{RB}^1 = \| \angfluxvec_{\parameter_1} \|_{\ell_2} \in \mathbb{R}$.
        \\
        iii.) Set $\mP_1 = \{\parameter_1\}$, $r^{(1)}=1$, $\rbdim = 1$.

\Indm 
\smallskip
 {\bf Reduced basis generation:}\\
 \While{$r^{(\rbdim)}>\text{tol}_{SRatio}$ and $\rbdim<M_{\textrm{tol}}$,}{
    i.) For each $\parameter \in \mP_{\textrm{interest}}=\mP_{\textrm{train}} \setminus \mP_{\rbdim}$,  run  ROM$(\rbTrial^\rbdim;\parameter)$ by 
$
\begin{cases}
      \text{Alg. \ref{alg:pG} \quad \tcc{for G-$L_1$ and G-Res}}\\
      \text{Alg. \ref{alg:pLS:offline}- Alg. \ref{alg:pLS:online} 
      \quad \tcc{for PG-$L_1$ and PG-Res}}
  \end{cases}
  $
  
       to get reduced expansion coefficients $\reducedcoeff^{\rbdim}(\parameter)$, and compute error indicator as 
   $\begin{cases}
         \Delta_{\rbdim}(\parameter)=\Delta_{\rbdim}^{(L)}(\parameter) \;\text{in \eqref{eq:L1 error indicator:b}
          \quad \tcc{for G-$L_1$ and PG-$L_1$}}\\
         \Delta_{\rbdim}(\parameter)=\Delta_{\rbdim}^{(R)}(\parameter) \; \text{by Alg.  \ref{alg:ResC:1}} 
         \quad \text{\tcc{for G-Res and PG-Res}}
     \end{cases} 
     $
     
    ii.) Choose $\parameter_{m+1} = \underset{\parameter\in{\mP_{\textrm{train}}\setminus\mP_m}}{\textrm{argmax}} {\Delta_{m}(\parameter)}$.
    \\
    iii.) Solve $\textrm{FOM}(\parameter_{\rbdim+1})$ to obtain $\angfluxvec_{\parameter_{\rbdim+1}}$.
    \\
    iv.) Form $\rbsMat^{\rbdim+1} = [\rbsMat^{\rbdim}, \angfluxvec_{\parameter_{\rbdim+1}}]$ and update its QR factorization $\rbsMat^{\rbdim+1} = \orthorbsmat^{\rbdim+1} \mathbf{R}_{RB}^{\rbdim+1}$ by computing the last columns of $\orthorbsmat^{\rbdim+1}$ and $\mathbf{R}_{RB}^{\rbdim+1}$ (see \eqref{eq:CGSR} for their nested form) through  the  incremental Classical Gram-Schmidt with Reorthogonalization (CGSR) algorithm.
    \\
    v.) Compute the spectral ratio $r^{(\rbdim+1)}$ based on the singular values of $\rbsMat^{\rbdim+1}$.\\
    vi.) Set $\mP_{\rbdim+1} := \mP_{\rbdim} \cup \{\parameter_{\rbdim+1}\}$ and $\rbdim: = \rbdim + 1$. 
}
{\bf Pre-compute for online stage:} pre-compute by 
$\begin{cases}
      \text{Step 1 of Alg. \ref{alg:pG} \quad \tcc{for G-$L_1$ and G-Res}}\\
      \text{Alg. \ref{alg:pLS:offline} 
      \quad \tcc{for PG-$L_1$ and PG-Res}}\\
  \end{cases}$

{\bf Output:} Set $M = \rbdim$; 
Output 
$\orthorbsmat^{M}$, and 
$\begin{cases}
{\bArr}^q\in\mathR^{M\times M}, q=1,\dots, Q_A; {\bbrr}^q\in\mathR^{M}, q=1,\dots, Q_b\quad \text{\tcc{for G-$L_1$ and G-Res}}\\
\bY^q\in\mathR^{s\times M}, q=1,2,\dots, Q_A; \tilde{\bb}^q\in\mathR^{s}, q=1,2,\dots, Q_b 
\quad \text{\tcc{for PG-$L_1$ and PG-Res}}
\end{cases}$

\medskip
\Indm 
{\bf Online stage:} \Comment{to predict/test by  ROM($\rbTrial^{M};\parameter$)}

 \Indp
For each $\parameter \in \mP_{\textrm{interest}}=\mP_{\textrm{test}}$,  compute the reduced expansion coefficient $\reducedcoeff^{M}(\parameter)$ that defines the reduced solution $\angfluxvec_{RB}^{M}(\parameter) = \orthorbsmat^{M} \reducedcoeff^{M}(\parameter)$ by 
 $ \begin{cases}
      \text{Step 2 of Alg. \ref{alg:pG} \quad \tcc{for G-$L_1$ and G-Res}}\\
      \text{Alg. \ref{alg:pLS:online}  \quad \tcc{for PG-$L_1$ and PG-Res}}\\
  \end{cases}$
 with   $\rbdim = M$.
\end{algorithm}

The main cost of the offline stage comes from reduced basis generation, and we will examine the cost of each major step of its $\rbdim$-th greedy iteration. For Step i, the cost $
\mathcal{C}^{(\rbdim)}$ is summarized in Table \ref{tab:cost:RBM:m:stepi} and this is to run  ROM($\rbTrial^\rbdim;\parameter)$ and to compute $\Delta_\rbdim(\parameter)$ at $N_\parameter$ instances of the parameter, with $N_\parameter\leq |\mP_{\textrm{train}}|$. Some estimates, e.g.,  $2\rbdim Q_b\leq 2\rbdim Q_AQ_b\leq Q_b^2+\rbdim^2 Q_A^2$ are used, to simplify the bound of the cost.
For Step iii, the cost to compute one full order solve is $O(\mN N_{iter})=O^{(FOM)}(\mN N_{iter})$, as  discussed in Appendix \ref{app:impt:cost:FOM}, while for Step iv, the cost to orthogonalize a new column of $\rbsMat^{\rbdim+1}$ is $O(\mN)$. We here use $O^{(FOM)}$ to specifically denote and highlight the big-O from the full order solver. The cost of $O(\mN\rbdim^2)$ to compute the spectral ratio $r^{(\rbdim)}$ in Step v  will be excluded, given that $r^{(\rbdim)}$ does not certify the errors in reduced solutions and the total complexity is estimated for a fixed reduced dimension $M$ upon offline termination. Once reduced basis generation is completed, one also pre-computes for the online stage, with the termination reduced dimension $M$, and the cost  
$\mathcal{C}^{(M)}_{\textrm{pre}}$  will be
\begin{equation}
   \mathcal{C}^{(M)}_{\textrm{pre}} =
   \begin{cases}
     O\left(\mN(M^2 Q_A+MQ_b)\right)& \textrm{ for G-}L_1 \;\textrm{and G-Res}\\
     O\left(\mN(M^2 Q_A^2+Q_b^2)\right)& \textrm{ for PG-}L_1 \;\textrm{and PG-Res}\\       
   \end{cases}.
\end{equation}

\begin{table}[ht]
    \centering
    \begin{tabular}{l|c|c|c}
    \hline
    &ROM($\rbTrial^\rbdim;\parameter)$& $\Delta_\rbdim(\parameter)$& ${\mathcal C}^{(\rbdim)}=$ total cost of Step i of Alg. \ref{alg:entireAlg} in $\rbdim$-th iteration\\
    \hline
{\bf G}-$L_1$&\eqref{eq:CC:ROM-G}&\eqref{eq:L1 error indicator cost}&$O\big(\mN(\rbdim^2 Q_A+\rbdim Q_b)\big)+ O\big(N_{\parameter}(\rbdim^3+\rbdim^2 Q_A+\rbdim Q_b)\big)$\\
{\bf G-Res}&\eqref{eq:CC:ROM-G}&\eqref{eq:CC:ResC:G}&$O\big(\mN(\rbdim^2 Q_A^2+Q_b^2)\big)+ O\big(N_{\parameter}(\rbdim^3+\rbdim^2 Q_A^2+Q_b^2)\big)$\\
 {\bf PG}-$L_1$&\eqref{eq:CC:ROM-PG}&\eqref{eq:L1 error indicator cost}&$O\big(\mN (m^2Q_A^2+Q_b^2)\big)+O\big(N_{\parameter} (\rbdim^3 Q_A+\rbdim^2 (Q_A^2+Q_b)+Q_b^2\big)$
\\
{\bf PG-Res}&\eqref{eq:CC:ROM-PG}&\eqref{eq:CC:ResC:PG}&$O\big(\mN (m^2Q_A^2+Q_b^2)\big)+O\big(N_{\parameter} (\rbdim^3 Q_A+\rbdim^2 (Q_A^2+Q_b)+Q_b^2\big)$\\
\hline
    \end{tabular}
    \caption{The computational cost ${\mathcal C}^{(\rbdim)}$ of Step i of Alg. \ref{alg:entireAlg} during the $\rbdim$-th greedy iteration. }
    \label{tab:cost:RBM:m:stepi}
\end{table}

With all these, the total offline computational complexity of Alg. \ref{alg:entireAlg} to build the four RBM-based reduced order models ROM($\rbTrial^M;\cdot)$ of $M$-dimension will be
\begin{align}\label{eq:final:cost:off}
  &\mathcal{C}^{(M)}_{\textrm{offline}} =O^{(FOM)}(\mN N_{iter}M)+ O(\mN M)+ \mathcal{C}^{(M)}_{\textrm{pre}}+ \sum_{\rbdim=1}^{M-1} {\mathcal C}^{(\rbdim)} \notag\\
  &\hspace{0.37in}=O^{(FOM)}(\mN N_{iter}M)+ O(\mN M)\\
  +& 
  \scalebox{0.8}{$\begin{cases}O\big(\mN(((M-1)^3+M^2) Q_A+((M-1)^2+M) Q_b)\big)+ O\big(N_{\parameter}((M-1)^4+(M-1)^3 Q_A+(M-1)^2 Q_b)\big) & \textrm{for G-}L_1\\
  O\big(\mN((M-1)^3 Q_A^2+M^2 Q_A+(M-1) Q_b^2+MQ_b)\big)+ O\big(N_{\parameter}((M-1)^4+(M-1)^3 Q_A^2+ (M-1) Q_b^2)\big)&\textrm{for G-Res}\\
  O\big(\mN (((M-1)^3+M^2)Q_A^2+M Q_b^2)\big)+O\big(N_{\parameter}((M-1)^4 Q_A+(M-1)^3 (Q_A^2+Q_b)+(M-1) Q_b^2)\big) & \textrm{for PG-}L_1, \textrm{PG-Res}
 \end{cases}$
 }
 \notag
\end{align}
with $N_{\parameter}=|\mP_{\textrm{interest}}|$.  Finally, the online prediction cost to compute  reduced solutions at $N_{\parameter}=|\mP_{\textrm{test}}|$ instances of parameter is
\begin{equation}
   \mathcal{C}^{(M)}_{\textrm{online}} =
   \begin{cases}
     O\left(|\mP_{\textrm{test}}| (M^3+M^2 Q_A+M Q_b) \right)& \textrm{ for G-}L_1 \;\textrm{and G-Res}\\
     O\left(|\mP_{\textrm{test}}| (M^3 Q_A+M^2 (Q_A^2+Q_b)+Q_b^2)\right)& \textrm{ for PG-}L_1 \;\textrm{and PG-Res}\\       
   \end{cases}.
   \label{eq:final:cost:on}
\end{equation}

\begin{remark}
Though the theoretical estimates of the offline computational complexity depend on $M$ at a faster than linear rate, our experiments in Section \ref{sec:num:2D2v} show that the FOM solves {(with $\mN\gg 1$)} can dominate the computational cost, and a much lower growth rate in $M$ can be observed in practice throughout the offline greedy process until its termination.
\end{remark}
\begin{remark}
\label{rem:costComp:offline}
    One cannot directly compare the computational complexity of different ROMs based on \eqref{eq:final:cost:off}. Numerical comparisons will be performed instead. From Remark \ref{rem:cost:2indicators}, one can conclude that to build reduced order models of the same dimension, G-Res is relatively more costly than G-$L_1$, and PG-Res is relatively more costly than PG-$L_1$, and these will be confirmed numerically.  
\end{remark}

\section{Numerical examples}
\label{sec:num}

In this section, we demonstrate the performance of the four proposed RBM-based ROMs for their accuracy, efficiency, and robustness through a collection of physically relevant benchmark {1D examples in slab geometry (with $d_\x=1$, also see Section \ref{sec:num:1D1v})} and 2D2v ($d_\x=2, d_\vel=3$) examples.   In all numerical examples, the trial space in \eqref{eq:Uh} for the FOM consists of piecewise linear polynomials with $\pd=1$, and the FOM solutions $\angflux_h(\cdot;\parameter)$ are regarded as the ground truth.
To quantify the resolution of the reduced solutions computed by $\textrm{ROM}(\rbTrial^{\rbdim};\parameter)$ at a specific parameter value $\parameter$, we consider the following errors.
\begin{equation}
\mathcal{E}_{L_2,\rbdim}(\parameter)  = \| \angflux_h(\cdot;\parameter) - \angflux_{RB}^\rbdim(\cdot; \parameter) \|_h, \quad
\mathcal{E}_{Res,\rbdim}(\parameter)  = \|r_{h,\parameter}(\angflux_{RB}^\rbdim(\cdot;\parameter))\|_h.
\end{equation}
The former measures the $L_2$ error between the FOM and ROM solutions, and the latter measures the residual of a ROM solution. 

ROM errors will be reported during the offline training/learning stage and also during the online prediction/testing stage, and will be referred to as training errors and test errors, respectively. Recall that during the offline stage,   a nested sequence of reduced basis spaces,  $\rbTrial^{\rbdim}\subset \rbTrial^{\rbdim+1}$ ($m=1,2,3,\dots$), are built along with a hierarchical family of ROMs, $\textrm{ROM}(\rbTrial^{\rbdim};\parameter)$, with different resolutions/fidelity. 
This follows a greedy procedure and is guided by an error indicator, with a new parameter value $\parameter_{\rbdim+1}\in\mP_{\textrm{train}}\subset \mP$ 
{chosen} in each greedy iteration.
The $L_2$ and residual training errors are defined at $\parameter_{\rbdim+1}$, namely 
\begin{equation} \label{eq:training errors}
\mathcal{E}_{L_2,\rbdim}^{train} = \mathcal{E}_{L_2,\rbdim}(\parameter_{\rbdim+1}), \qquad
 \mathcal{E}_{Res,\rbdim}^{train} = \mathcal{E}_{Res,\rbdim}(\parameter_{\rbdim+1}). 
\end{equation}
They measure the resolutions of the reduced surrogate solvers $\textrm{ROM}(\rbTrial^{\rbdim};\cdot)$. How these errors change in reduced dimension $\rbdim$  informs the effectiveness of the error indicator and reduced surrogate solvers. 

The online $L_2$ and residual test errors below measure the predictability and generalizability of the ROMs for unseen parameter values from $\mP_{\textrm{test}}\subset \mP$ 
\begin{equation}\label{eq:test errors}
    \mathcal{E}_{L_2,\rbdim}^{test} = \max_{\parameter \in \mP_{\textrm{test}}} \mathcal{E}_{L_2,\rbdim}(\parameter),\qquad 
    \mathcal{E}_{Res,\rbdim}^{test} = \max_{\parameter \in \mP_{\textrm{test}}} \mathcal{E}_{Res,\rbdim}(\parameter)
\end{equation}
If the offline error indicators are effective, one would expect that both training and test errors of the same type (i.e. $L_2$ type or residual type) are comparable in {magnitude} and they generally decrease as $\rbdim$ grows. Faster decay of these errors in $\rbdim$ implies better efficiency of the respective $\textrm{ROM}(\rbTrial^{\rbdim};\cdot)$ with relatively smaller reduced dimension $\rbdim$.  Related to the affine form of the reduced matrices and data vectors in \eqref{eq:A:G:p} and \eqref{eq:A:PG:p},  $Q_b=1$ in all examples, while $Q_A=3$ for most examples, except for the line source example in Section \ref{sec:line source} ($Q_A=2$) and the pin-cell example in Section \ref{sec:pin-cell} ($Q_A=4$).  {In this work, the training set for each example is chosen deterministically, i.e., as uniform meshes in the parameter space $\mP$, with its actual size  based on extensive numerical experiments.}
Unless otherwise specified, the starting $\parameter_1$ in Alg. \ref{alg:entireAlg} is chosen as the geometric center of the parameter set $\mP$ that is also in $\mP_{\textrm{train}}$.

 \subsection{{1D slab geometry}}
\label{sec:num:1D1v}

We start with a set of experiments to study the proposed ROMs when they are applied to  a simplified {1D model in slab geometry},  
\begin{subequations}
\label{eq:1d:RTE}
\begin{align}
& v \partial_x \angflux(x,v) + \total(x) \angflux(x,v) = \scat(x) \macro(x) + \source(x), \quad (x,v) \in [x_L,x_R] \times [-1,1], \\
& \angflux(x_L,v) = g_L, \;v > 0, \quad \angflux(x_R,v) = g_R, \;v < 0, 
\end{align}
\end{subequations}
where $\macro(x) = \frac{1}{2} \int_{-1}^{1} \angflux(x,v) d v$.
The model is derived by assuming the angular flux only depends on the $x$ variable  with $d_\x=1$ (in the slab direction). $\angflux$ in \eqref{eq:1d:RTE} is in fact the average of the angular flux with respect to the $(y,z)$ cosine directions of $\vel$, while $v$ stands for the $x$ cosine of $\vel$.  The FOM discretization in Section \ref{sec:ROM} can be directly adapted to \eqref{eq:1d:RTE}. 
For all numerical examples, a uniform mesh of $N_x$ elements is used for the spatial domain $[x_L,x_R]$, and $N_v$-point Gauss-Legendre quadrature is applied in the $v$ direction, with $N_v=16$.
Besides, with the relatively low dimensionality of this simplified model, direct methods are used to solve the FOM.

\subsubsection{Spatially homogeneous material}
\label{sec:spatially homogeneous material}

In this subsection, we consider an example with spatially homogeneous material on the domain $[x_L, x_R]=[0,4]$, with constant source  $\source(x) = 0.01$ and zero inflow boundary conditions.  The parameters are the scattering and absorption cross sections, namely $\scat = \parameter_s$, $\absorp = \parameter_a$, with $\parameter=(\parameter_s, \parameter_a)\in \mP= [1,2] \times [5,6]$.   The training parameter set $\mP_{\textrm{train}}$ consists of $21 \times 21$ equally spaced points in $\mP$, and the test set  $\mP_{\textrm{test}}$ consists of $10 \times 10$   random points uniformly sampled from $\mP$. A uniform spatial mesh with $N_x = 80$ is used, and this corresponds to $\mN=(K+1)*N_x*N_v=2560$. We take $\text{tol}_{SRatio}=10^{-8}$ as the tolerance for the spectral ratio to terminate the offline stage of the RBM.

We implement the four ROMs, namely, G-$L_1$, G-Res, PG-$L_1$, PG-Res, with their $L_2$ and residual type training and test errors presented in Figure \ref{fig:1d Weak Scattering Strong Absorption Training and Test Errors}, and their spectral ratios $r^{(\rbdim)}$ in Figure \ref{fig:1d Spatially Homogeneous Material Training and Test Errors Scaled Residual and SR}-(a), all as a function of the reduced dimension $\rbdim$. In Figure \ref{fig:1d Spatially Homogeneous Material Points Picked in Greedy Search}, we further present the first 10 parameter points that are picked in the offline stage of the four ROMs. The sizes of the symbols encode the order of the greedy
selection, and this convention will be followed for the remaining numerical examples except in Figure \ref{fig:2d Lattice 20 Points Greedy Search}.
 The following is a summary of our observations.

\begin{enumerate}[label=(\roman*)]

\item All training and test errors generally decrease as the reduced dimension $\rbdim$ grows, indeed at exponential rates, showing that   the reduced surrogate solvers $\{\textrm{ROM}(\rbTrial^{\rbdim};\cdot)\}_{\rbdim}$ improve in resolution as the reduced trial space $\rbTrial^{\rbdim}$ is enriched.

\item The training errors are generally comparable to the corresponding test errors, confirming the good predictability and generalization capacity of the ROMs.

\item The number of iterations required for the spectral ratio to reach a certain level of tolerance 
in general is comparable across the four ROMs, evidencing the robustness of the spectral ratio as a stopping criterion.  It is important to note that the spectral ratio does not certify the errors, either $L_2$ type or residual type, of the ROM solutions. 
\item  The greedy strategy appears to favor ``extreme" points in the parameter space $\mP$, in the sense that all points beyond the initial $\parameter_1$, that is hand-picked as the center of $\mP$, are located on the boundary of $\mP$. In particular, greedy search picks points at or near the corners of $\mP$ during the early iterations.

\item It can be shown based on Remark \ref{rem:PG:monotone} that the training and test errors of the residual type by  PG-Res  will decrease monotonically. This is confirmed by Figure \ref{fig:1d Weak Scattering Strong Absorption Training and Test Errors}-(d). Though not guaranteed in general,  such monotonicity is observed for G-Res in this example.
\item In this example, $\absorp^\star=\inf_{x\in[x_L, x_R],\parameter\in\mP} \absorp(x; \parameter)=5>0$. Based on Proposition \ref{prop:aPostErr}, G-Res and PG-Res are certified once the residual-based error indicator 
is scaled by $(\absorp^\star)^{-1}$ with $\absorp^\star=5$. Moreover,  the scaled residual training (resp. test) errors are upper bounds of the $L_2$ training (resp. test) errors, namely,
\begin{equation}
    \mathcal{E}_{L_2,\rbdim}^{train}\leq \frac{1}{5}\mathcal{E}_{Res,\rbdim}^{train},\qquad \mathcal{E}_{L_2,\rbdim}^{test}\leq \frac{1}{5}\mathcal{E}_{Res,\rbdim}^{test},
    \label{eq:certified}
\end{equation}
and this is numerically confirmed by Figure \ref{fig:1d Spatially Homogeneous Material Training and Test Errors Scaled Residual and SR}-(b)(c).

\item 
On the other hand,  for G-$L_1$ and PG-$L_1$ when the $L_1$ error indicator is used, 
there are some dips in the training errors compared with the test errors in the relatively early iterations, and these dips are relatively bigger for PG-$L_1$. This seems to indicate that the best parameter points may not always be selected during these iterations. Another contributing factor can be our handpicked $\parameter_1$. As evidenced above, parameter values on the boundary of $\mP$ are preferred during the greedy iterations, our strategy of using the geometry center of $\mP$ as $\parameter_1$ likely has a more lingering effect on G-$L_1$ and PG-$L_1$ for this example as the greedy iterations progress.

\end{enumerate}

 It is important to note that the choice of both the error indicator (i.e. $L_1$ or residual type) and the projection type (i.e.  Galerkin or LS-Petrov-Galerkin type) contribute to each ROM. For this example, G-Res and PG-Res perform better in the sense that they are more consistent when improving the resolution as the reduced dimension $\rbdim$ increases, and the quality of the ROMs seem to be more sensitive to the choice of error indicators.

\begin{figure}[h!]
\centering{
\begin{subfigure}{0.45\textwidth}
\includegraphics[width=\linewidth]{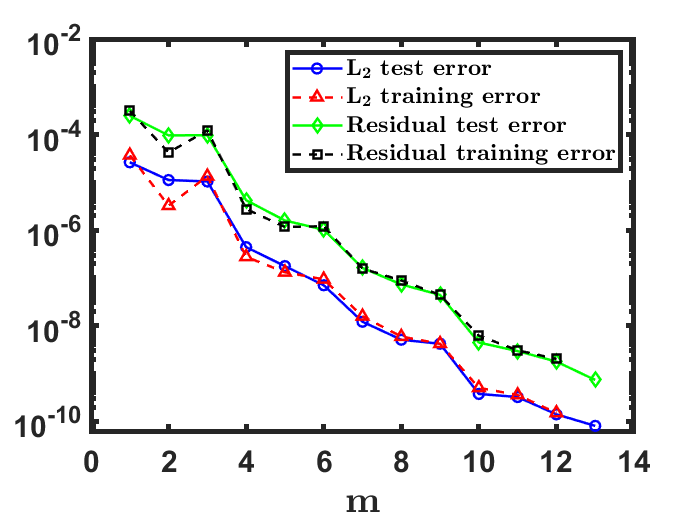}
\caption{}
\end{subfigure}
\begin{subfigure}{0.45\textwidth}
\includegraphics[width=\linewidth]
{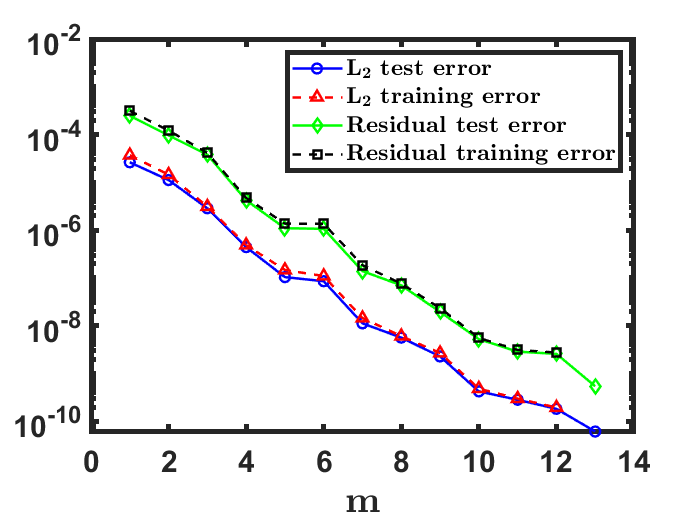}
\caption{}
\end{subfigure}

\begin{subfigure}{0.45\textwidth}
\includegraphics[width=\linewidth]
{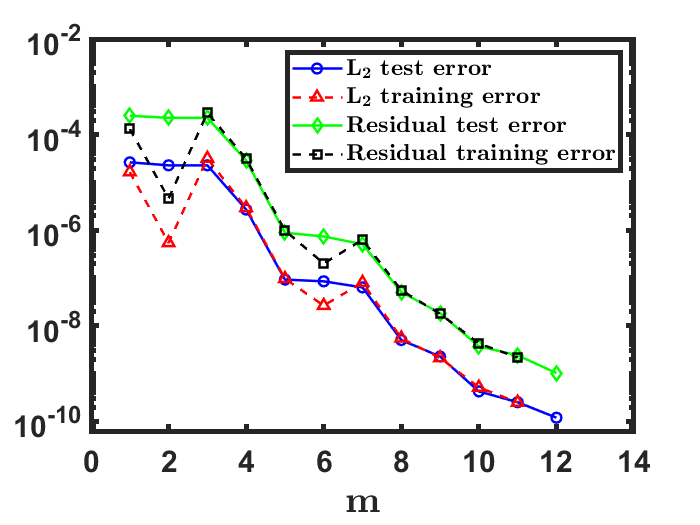}
\caption{}
\end{subfigure}
\begin{subfigure}{0.45\textwidth}
\includegraphics[width=\linewidth]
{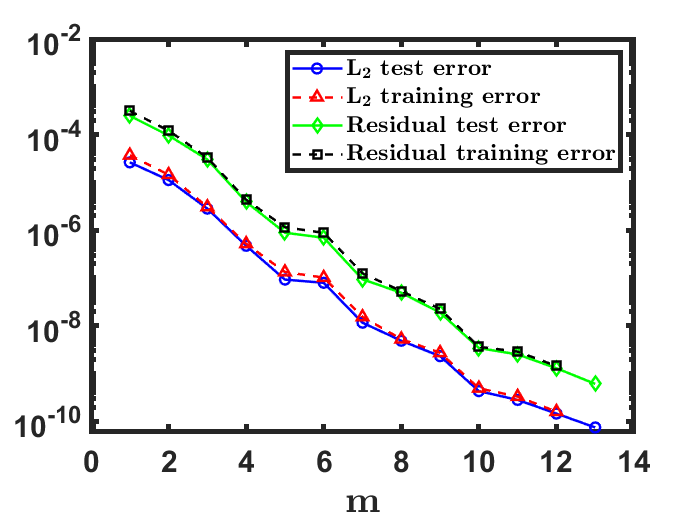}
\caption{}
\end{subfigure}
}
\caption{{1D slab geometry:} Spatially homogeneous material. Training and test errors using (a) G-$L_1$, (b) G-Res, (c) PG-$L_1$, (d) PG-Res.} 
\label{fig:1d Weak Scattering Strong Absorption Training and Test Errors}
\end{figure}

\begin{figure}[h!]
\centering{
\begin{subfigure}{0.3\textwidth}
\includegraphics[width=\linewidth]
{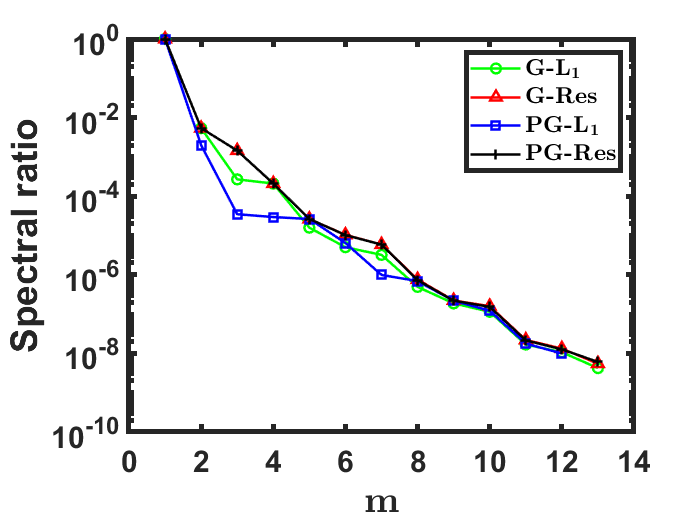}
\caption{}
\end{subfigure}
\begin{subfigure}{0.3\textwidth}
\includegraphics[width=\linewidth]
{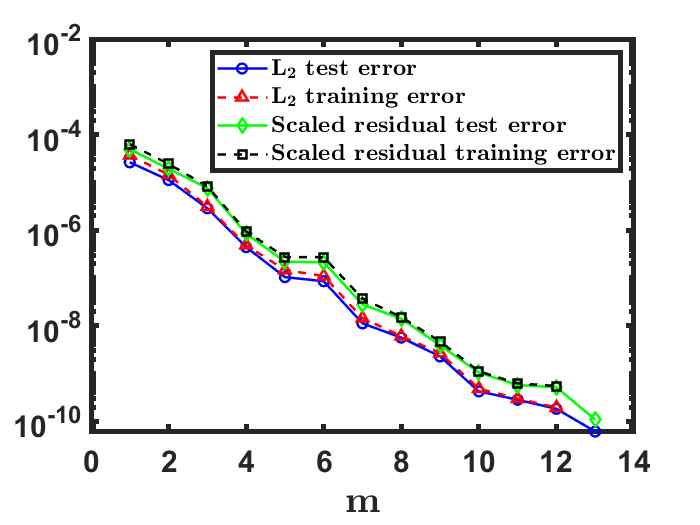}
\caption{}
\end{subfigure}
\begin{subfigure}{0.3\textwidth}
\includegraphics[width=\linewidth]
{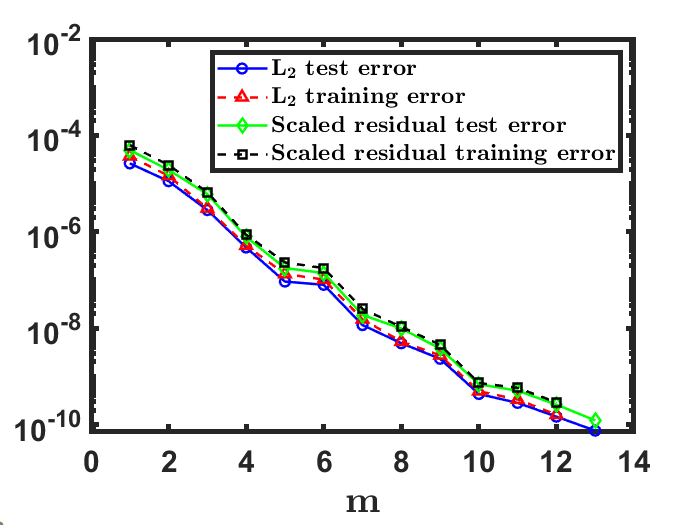}
\caption{}
\end{subfigure}
}
\caption{1D slab geometry: Spatially homogeneous material. (a) Histories of  the spectral ratio. $L_2$ training and test errors with ROM residual training and test errors scaled by $\frac{1}{\absorp^\star}$ with $\absorp^\star =5$ using (b) G-Res, (c) PG-Res. } 
\label{fig:1d Spatially Homogeneous Material Training and Test Errors Scaled Residual and SR}
\end{figure}

\begin{figure}[ht!]
\includegraphics[width=0.24\textwidth,height=1.3in]{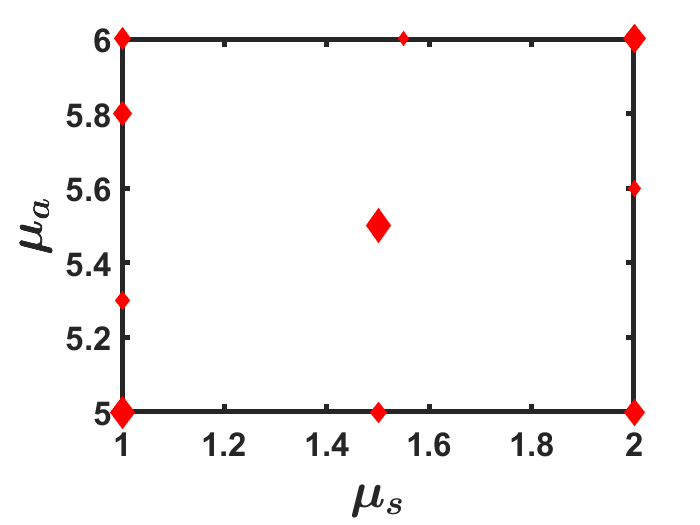}
\includegraphics[width=0.24\textwidth, height=1.3in]
{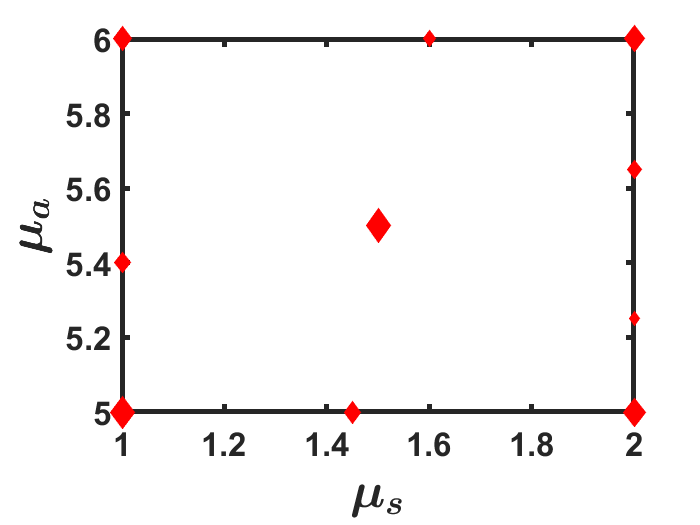}
\includegraphics[width=0.24\textwidth, height=1.3in]
{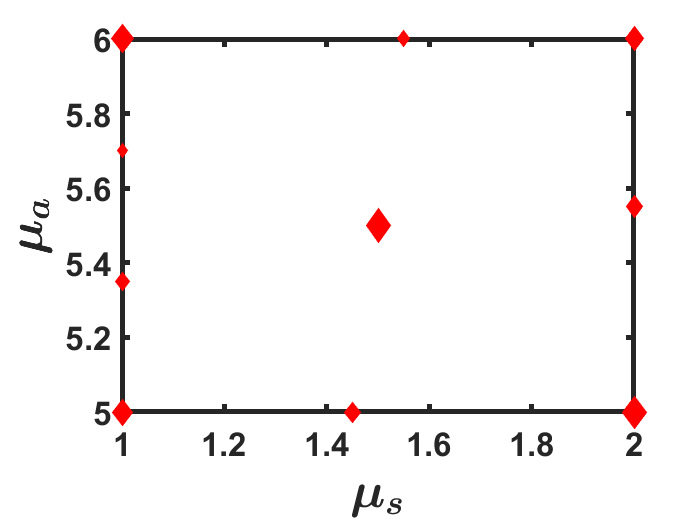}
\includegraphics[width=0.24\textwidth, height=1.3in]
{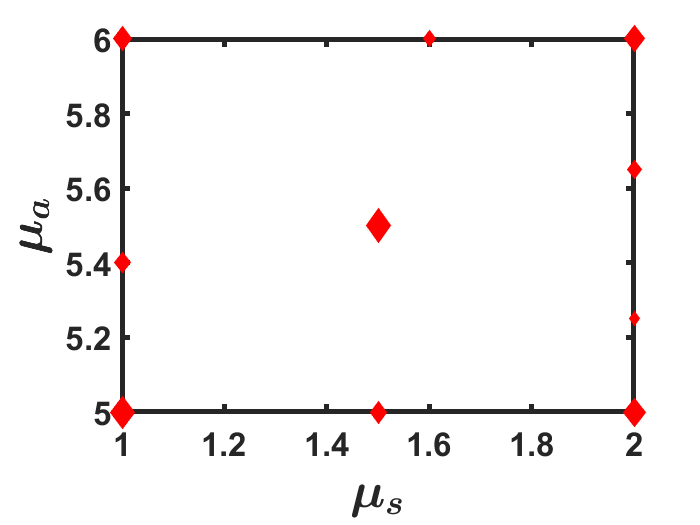}
\caption{1D slab geometry: Spatially homogeneous material. From left to right: first 10 parameter points picked in greedy search using G-$L_1$, G-Res,  PG-$L_1$, PG-Res, respectively.}
\label{fig:1d Spatially Homogeneous Material Points Picked in Greedy Search}
\end{figure}

\subsubsection{Two-material problem}
\label{sec:two-material}

In this subsection, we consider a two-material problem on the domain $[x_L, x_R]=[0,4]$ with zero source and the inflow boundary conditions $\angflux(x_L,v) = 5$ for $v > 0$, and $\angflux(x_R,v) = 0$ for $v < 0$.
The parameter $\parameter=(\parameter_s,\parameter_a)\in \mP = [90,100] \times [1,2]$, arising from the modeling of the scattering and absorption cross sections, namely,
\[
\scat(x; \parameter)
=
\begin{cases}
0, & 0 < x < 1 \\
\parameter_s, & 1 < x < 4
\end{cases}
,
\quad
\absorp(x; \parameter)
=
\begin{cases}
\parameter_a, & 0 < x < 1 \\
0, & 1 < x < 4
\end{cases}
.
\]
In this multi-scale problem, the background medium on $[0,1]$ is purely absorbing, and that on $[1,4]$ displays strong scattering. 
The training parameter set $\mP_{\textrm{train}}$ consists of $101 \times 21$ equally spaced points in $\mP$, and the test set $\mP_{\textrm{test}}$ consists of $10 \times 10$ randomly chosen points uniformly sampled from $\mP$.
 A uniform spatial mesh with $N_x = 120$ is used and this gives $\mN=3840$, and we take $\text{tol}_{SRatio}=10^{-10}$ as the tolerance for the spectral ratio to terminate the offline stage of the RBM.

Again, we implement all four ROMs and present their $L_2$ and residual type training and test errors in Figure \ref{fig:1d Two-Material Problem Training and Test Errors}, and their spectral ratios $r^{(\rbdim)}$ in Figure \ref{fig:1d TWo-Material Problem Spectral Ratio and Max Cond(ROM) and PG-Res alternative strategy}-(a), all as functions of the reduced dimension $\rbdim$. 
In Figure \ref{fig:1d Two-Material Problem Points Picked in Greedy Search}, we further present the first 10 parameter points that are picked in the offline stage of the four ROMs.   Some similar observations are made as the example in Section \ref{sec:spatially homogeneous material}, including the overall decreasing trends of the training and testing errors {at exponential  decay rates} as $\rbdim$ {increases} in Figure \ref{fig:1d Two-Material Problem Training and Test Errors},
the training and test errors of the same type {having comparable magnitudes} for each ROM,  the training and test errors of the residual
type by PG-Res decreasing monotonically, and the greedy selection favoring ``extreme" parameter values during the early iterations in Figure \ref{fig:1d Two-Material Problem Points Picked in Greedy Search}.  The following observations are particularly made for this example.

\begin{itemize}
    \item 
Among the four ROMs and with the same $\rbdim$, the training and test errors of the residual type are all comparable, while the training and testing errors of $L_2$ type by Galerkin based ROMs are relatively smaller, indicating {that} G-$L_1$ and G-Res are more accurate for this example.

\item The number of iterations required for the spectral ratio to reach a certain level of tolerance in general is comparable in the four ROMs, with PG-$L_1$ requiring 2-3 fewer iterations.

\item  For this example, G-$L_1$ ROMs and G-Res ROMs perform more {consistently}.  With PG-$L_1$, the training error displays some visible dips {in comparison to} the test errors of the same type (i.e. $L_2$ or residual type), indicating {that} the best parameter values are not  always selected. Such dips are also observed for PG-Res, yet only in the $L_2$ errors, while the residual errors in PG-Res  stay monotone in $\rbdim$ as discussed in  Remark \ref{rem:PG:monotone}.

\item For the two components of the parameter $\parameter=(\parameter_s,\parameter_a)$, though $\parameter_a$ has a narrower range, relatively more parameter values are selected along the $\parameter_a$ direction, evidencing that the solution manifold is more sensitive to this parameter component, at least during the early greedy selection.

\end{itemize}

\begin{figure}[ht!]
\centering{
\begin{subfigure}{0.45\textwidth}
\includegraphics[width=\linewidth]{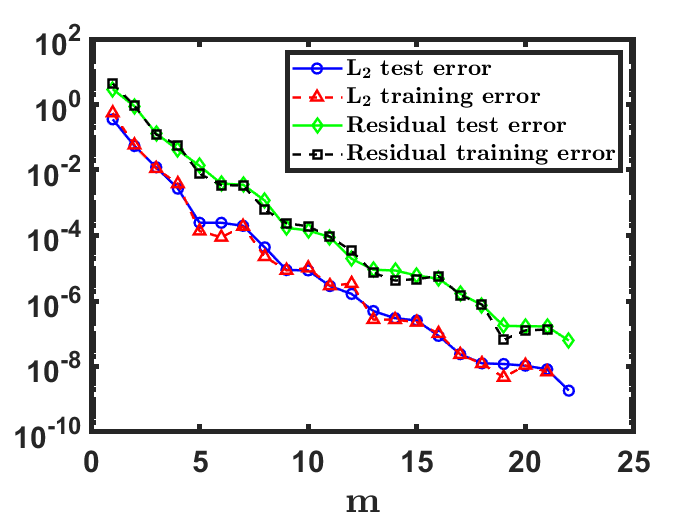}
\caption{}
\end{subfigure}
\begin{subfigure}{0.45\textwidth}
\includegraphics[width=\linewidth]{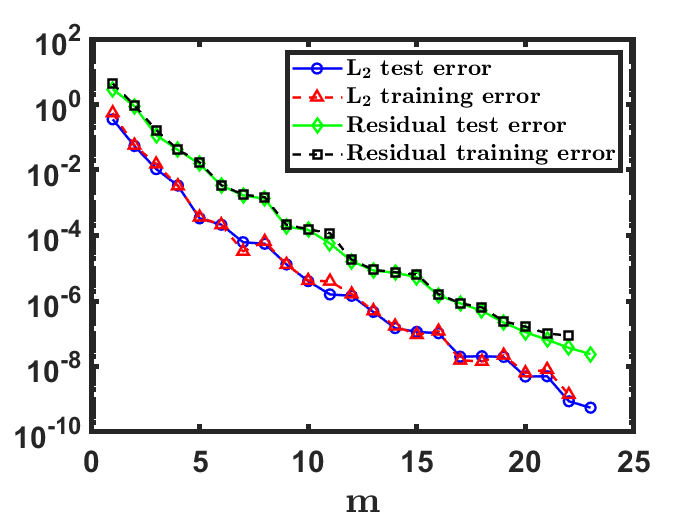}
\caption{}
\end{subfigure}

\begin{subfigure}{0.45\textwidth}
\includegraphics[width=\linewidth]{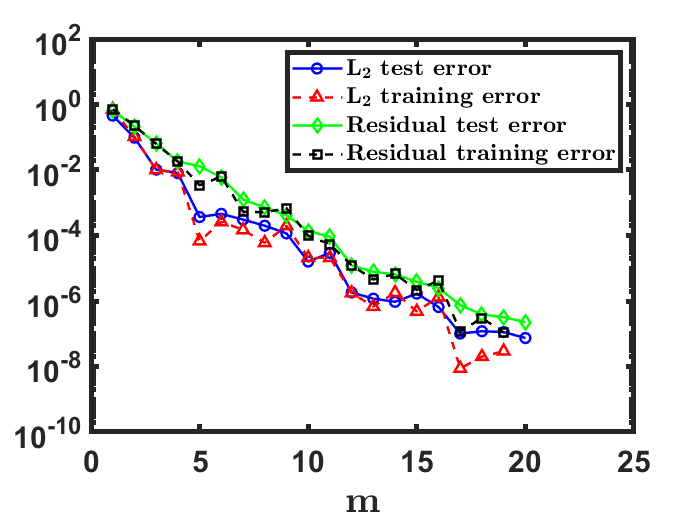}
\caption{}
\end{subfigure}
\begin{subfigure}{0.45\textwidth}
\includegraphics[width=\linewidth]{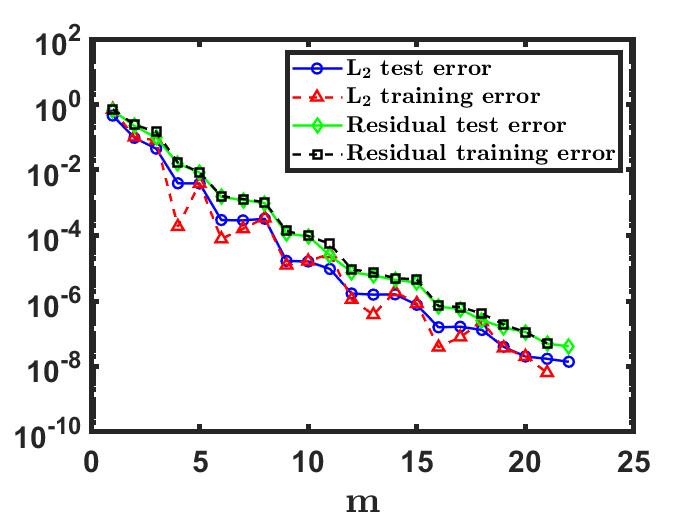}
\caption{}
\end{subfigure}
}
\caption{1D slab geometry: Two-material problem. Training and test errors using (a) G-$L_1$, (b) G-Res, (c) PG-$L_1$, (d) PG-Res.} 
\label{fig:1d Two-Material Problem Training and Test Errors}

\end{figure}

\begin{figure}[ht]
\centering{
\begin{subfigure}{0.3\textwidth}
\includegraphics[width=\linewidth]{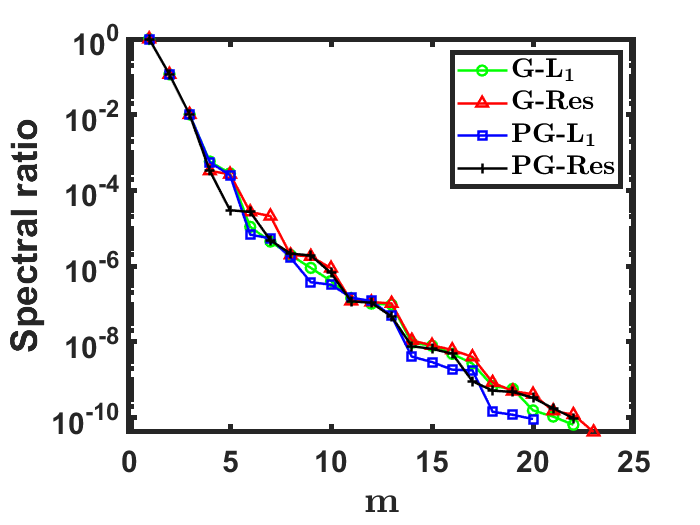}
\caption{}
\end{subfigure}
\begin{subfigure}{0.3\textwidth}
\includegraphics[width=\linewidth]{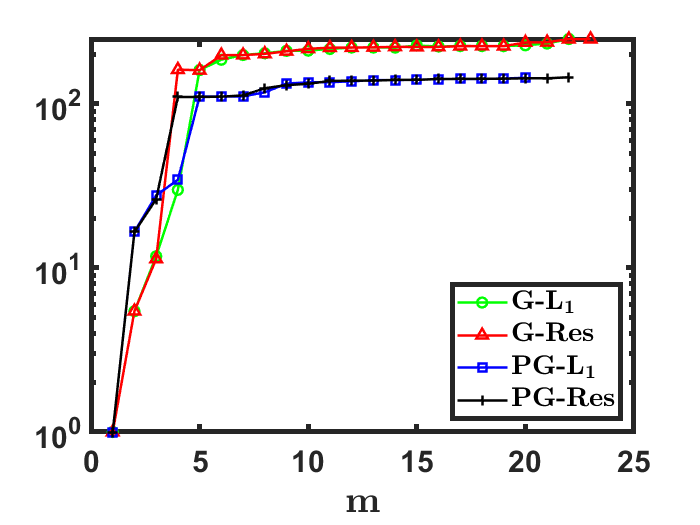}
\caption{}
\end{subfigure}
\begin{subfigure}{0.3\textwidth}
\includegraphics[width=\linewidth]{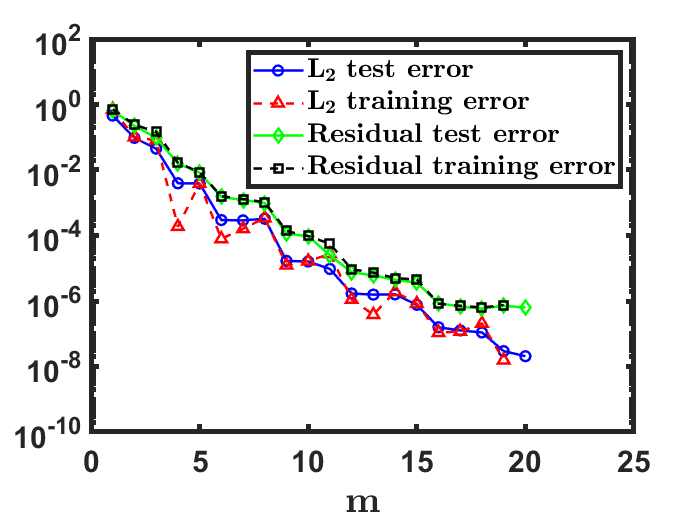}
\caption{}
\end{subfigure}
}
\caption{1D slab geometry: Two-material problem. (a) Histories of the spectral ratio. (b)  Maximum condition numbers of reduced system matrices over  test parameter set $\mP_{\textrm{test}}$. (c) Training and test errors using PG-Res and the variant implementation strategy Alg. \ref{alg:pLS:offline}$^\prime$ - Alg. \ref{alg:pLS:online} in Appendix \ref{app:variants:1} {and Eqn.}  \eqref{eq:Res-Alt1} to evaluate the residual.
}
\label{fig:1d TWo-Material Problem Spectral Ratio and Max Cond(ROM) and PG-Res alternative strategy}
\end{figure}

\begin{figure}[ht]
\includegraphics[width=0.24\textwidth,height=1.3in]
{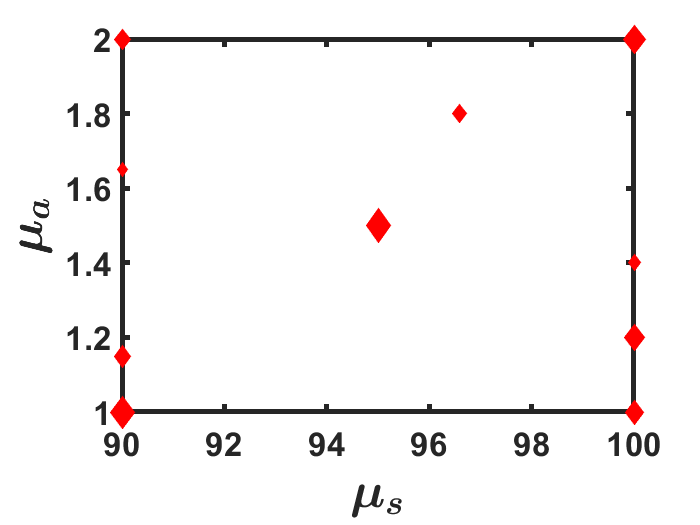}
\includegraphics[width=0.24\textwidth, height=1.3in]
{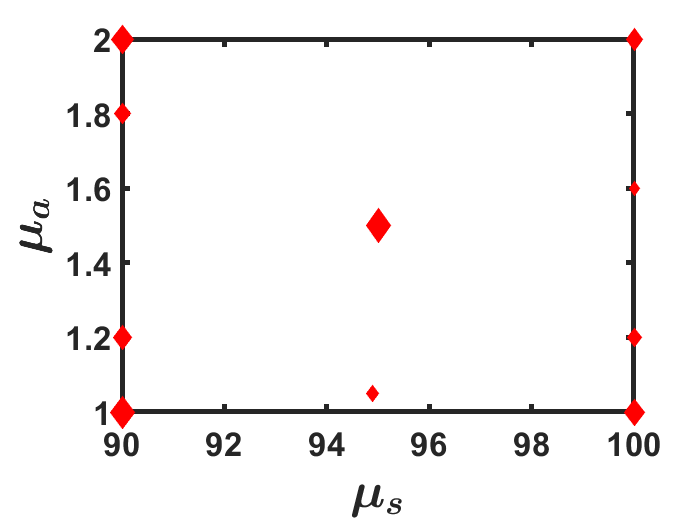}
\includegraphics[width=0.24\textwidth, height=1.3in]
{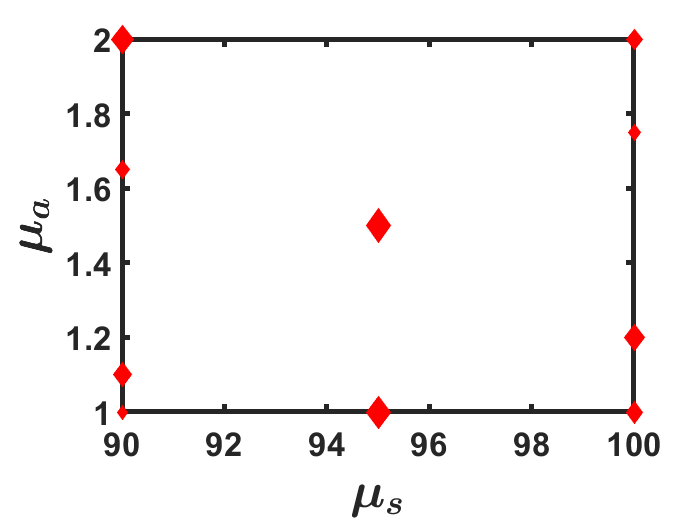}
\includegraphics[width=0.24\textwidth, height=1.3in]
{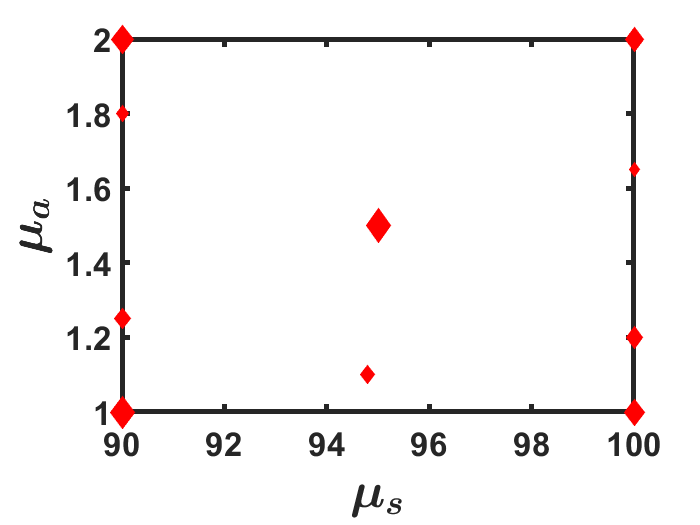}
\caption{1D slab geometry: Two-material problem. From left to right: first 10 parameter points picked in greedy search using G-$L_1$, G-Res,  PG-$L_1$, PG-Res, respectively.}
\label{fig:1d Two-Material Problem Points Picked in Greedy Search}
\end{figure}

For G-$L_1$ and G-Res, the gap between the $L_2$ and residual type errors is relatively larger. It is speculated that this is due to the difference in the conditioning of the reduced system matrices, and the supporting evidence can be found in Figure \ref{fig:1d TWo-Material Problem Spectral Ratio and Max Cond(ROM) and PG-Res alternative strategy}-(b), where we report 
the maximum condition numbers of the reduced system matrices over $\mP_{\textrm{test}}$ for each ROM, namely
$\max_{\parameter\in\mP_{\textrm{test}}}\textrm{Cond}_2({\bArr}_\parameter)$ for G-$L_1$ and G-Res, and $\max_{\parameter\in\mP_{\textrm{test}}}\textrm{Cond}_2({\bAr}_\parameter)$ for PG-$L_1$ and PG-Res, as $\rbdim$ grows. Recall ${\bArr}_\parameter$ is defined in \eqref{eq:SystemMatD-G} and ${\bAr}_\parameter$ is defined in \eqref{eq:SystemMatD-PG}.

In Figure \ref{fig:1d Two-Material Problem Rho Basis Functions}, we further plot the zero-th order moment, the reduced density $\macro$, of the first 5 (orthonormalized) reduced basis functions by the four ROMs.
With the reduced basis functions being problem-dependent, the profiles of their respective density display a sharp transition and capture the physical interior layer at the material interface $x=1$.  Note that 
the leading three $\macro$'s by all methods are comparable. This corresponds to the spectral ratio around $10^{-2}$.  For the fourth $\macro$, those by G-$L_1$ and PG-$L_1$ are similar, and those by G-Res and PG-Res are similar. Even though the reduced densities with smaller $\rbdim$ are close for all four ROMs, the training and test errors of these methods show greater difference, and this can be attributed to the difference of Galerkin and Petrov-Galerkin projections, and surely also the dependence of the methods more on the angular flux $\angflux$, not just on the macroscopic $\rho$.

\begin{figure}[ht]
\includegraphics[width=0.24\textwidth,height=1.5in]
{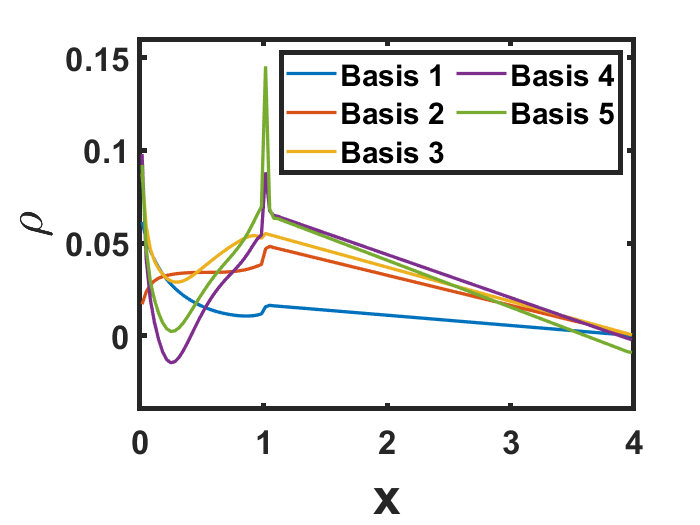}
\includegraphics[width=0.24\textwidth, height=1.5in]
{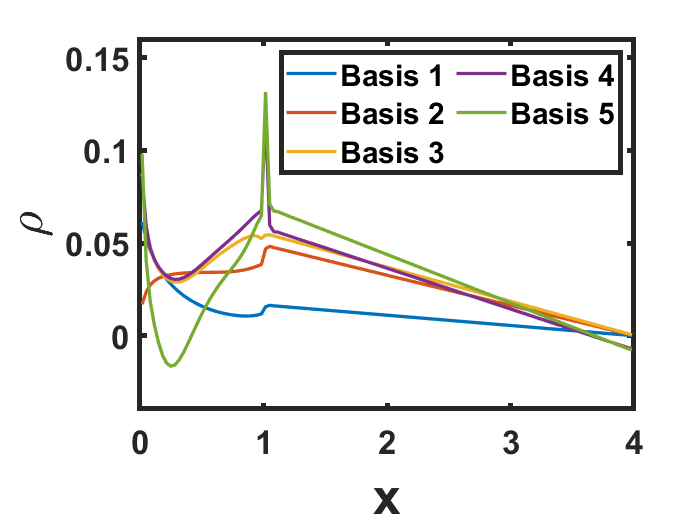}
\includegraphics[width=0.24\textwidth, height=1.5in]
{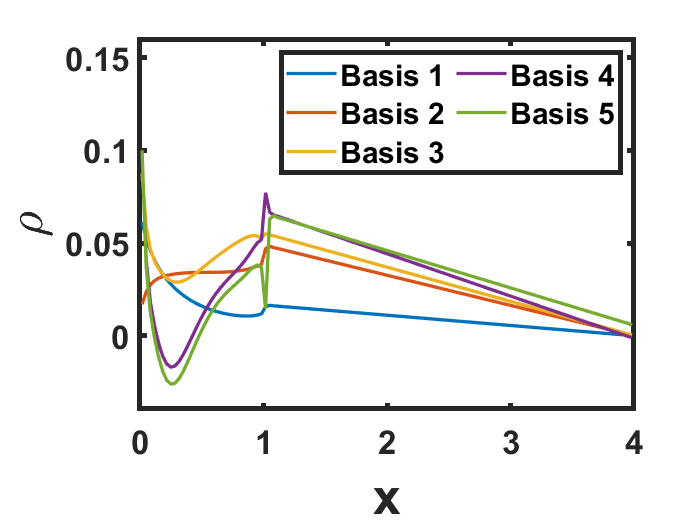}
\includegraphics[width=0.24\textwidth, height=1.5in]
{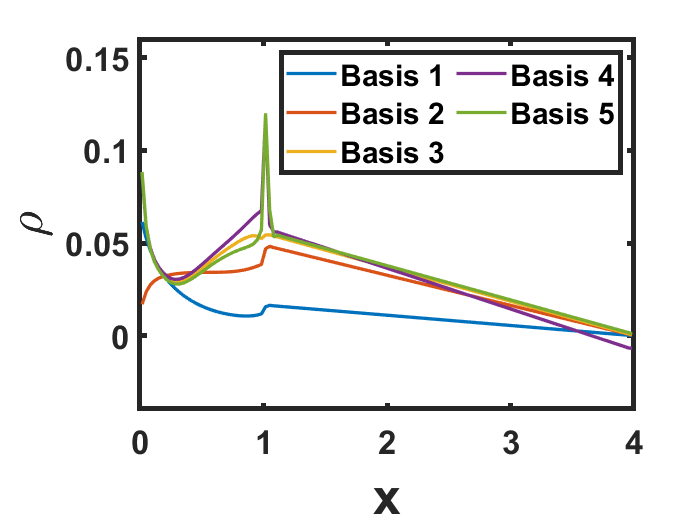}
\caption{1D slab geometry: Two-material problem. From left to right: the reduced density $\macro$ of the first 5 (orthonormalized) reduced basis functions  using  G-$L_1$,  G-Res,  PG-$L_1$,  PG-Res, respectively.}
\label{fig:1d Two-Material Problem Rho Basis Functions}
\end{figure}

Finally, we want to demonstrate the performance of PG-Res implemented by a variant strategy,  consisting of Alg. \ref{alg:pLS:offline}$^\prime$ - Alg. \ref{alg:pLS:online} in  Appendix \ref{app:variants:1} (for reduced order solvers) and the residual evaluation (in both the error indicator and the residual error computation) {based on Eqn. \eqref{eq:Res-Alt1}.} The resulting training and test errors are reported in Figure \ref{fig:1d TWo-Material Problem Spectral Ratio and Max Cond(ROM) and PG-Res alternative strategy}-(c). It is observed that the residual errors plateau when $\rbdim \geq 15$. In contrast, the residual errors in Figure \ref{fig:1d Two-Material Problem Training and Test Errors}-(d) continue to decrease when $\rbdim \geq 15$. This shows that our chosen implementation strategy suffers less from the loss of significance, especially for ROMs to achieve high resolution, and is hence more numerically robust. On the other hand, the residual stagnation is not of a concern if one wants a reduced order model with residual errors of magnitude not lower than $10^{-6}$. In such a situation, the variant considered here makes a good efficient strategy to implement PG-Res.

\subsubsection{Spatially varying scattering}
\label{sec:spatially varying scattering}

In this subsection, we consider an example with a variable scattering cross section on the domain $[x_L, x_R]= [0,4]$ with the constant source $\source(x) = 0.01$ and zero absorption cross section and zero inflow boundary conditions.
The parameter $\parameter=(\parameter_s^{(1)}, \parameter_s^{(2)})\in \mP=[90,100] \times [90,100]$,  modeling the spatially varying scattering cross section, namely,
$\scat(x; \parameter) = \parameter_s^{(1)} + \parameter_s^{(2)} x.$
%
As the parameter varies, the scattering cross section $\scat$ {varies widely} from $90$ to $500$. 
The training parameter set $\mP_{\textrm{train}}$ consists of $101 \times 101$ equally spaced points in $\mP$, and the test set $\mP_{\textrm{test}}$ consists of $10 \times 10$ randomly chosen points uniformly sampled from $\mP$.
A uniform spatial mesh with $N_x = 80$ is used and this gives $\mN = 2560$, and we take $\text{tol}_{SRatio}=10^{-14}$ as the tolerance for the spectral ratio.
Note that this error tolerance is much smaller than that in the previous two examples, and this allows us to better observe the difference of the proposed ROMs for this multi-scale example.

For all four ROMs, the training and test errors of $L_2$ and residual types are presented in Figure \ref{fig:1d Spatially Varying Scattering Training and Test Errors}, and the spectral ratios are presented in Figure \ref{fig:1d Spatially Varying Scattering Spectral Ratio and Max Cond(ROM) and PG-Res alternative strategy}-(a), all as a function of the reduced dimension $\rbdim$. In Figure \ref{fig:1d Spatially Varying Scattering Spectral Ratio and Max Cond(ROM) and PG-Res alternative strategy}-(b), we also report the maximum condition numbers of the reduced system matrices over $\mP_{\textrm{test}}$ for each ROM as $\rbdim$ grows. Some similar observations are made as for the previous examples, regarding the overall
decreasing trends of the training and testing errors as $\rbdim$ grows,  the training and test errors of the same
type being at comparable magnitude for each ROM, and the training and test errors of the residual type by
PG-Res decreasing monotonically. The following observations are particularly made for this example.

\begin{itemize}
\item The most striking distinction among the four ROMs is the fast decay of the training and test errors, especially the $L_2$ type, for the LS-Petrov-Galerkin based ROMs (see Figure \ref{fig:1d Spatially Varying Scattering Training and Test Errors}-(c)(d)). For example, for the $L_2$ type errors to reach $10^{-6}$, both PG-$L_1$ and PG-Res require about 6-7 iterations, while  G-$L_1$ and G-Res require about 12-13 iterations. 
 The observation here in the training and test errors by different methods is not entirely informed by the respective spectral ratio in Figure \ref{fig:1d Spatially Varying Scattering Spectral Ratio and Max Cond(ROM) and PG-Res alternative strategy}-(a). Nevertheless, the spectral ratio of a specific ROM with its monotonic trend still informs about the richness of the reduced trial space of this ROM.

\item Upon termination, the test $L_2$ errors are at magnitude $10^{-9}$ or $10^{-8}$. PG-Res reaches the terminal error sooner and plateaus after about 10 iterations. The attainable error level 
is closely related to  the conditioning of the reduced systems. Specifically, the maximum of the 2-norm condition numbers of the reduced system matrices $\bAr_\parameter$ and $\bArr_\parameter$ over the test set $\mP_{\textrm{test}}$ is at magnitude $10^{5}$ or $10^{6}$ (see Figure \ref{fig:1d Spatially Varying Scattering Spectral Ratio and Max Cond(ROM) and PG-Res alternative strategy}-(b)). Note {that} all simulations are carried out in double precision environment. While the $L_2$ errors reach a plateau for PG-$L_1$ and PG-Res, the respective residual errors continue to decay.

\item    For the same reduced dimension $\rbdim$, LS-Petrov-Galerkin based ROMs (bottom row of Figure \ref{fig:1d Spatially Varying Scattering Training and Test Errors}) overall lead to smaller residual training and test errors than Galerkin-based ROMs.

\end{itemize}

With the condition numbers of the reduced system matrices being of large magnitudes as in Figure \ref{fig:1d Spatially Varying Scattering Spectral Ratio and Max Cond(ROM) and PG-Res alternative strategy}-(b), it is crucial to solve the parametric least-squares problem \eqref{eq:rom:alg:p:PG} using the QR-based algorithms proposed in this work, instead of any procedure based on the normal equation in \eqref{eq:rom:alg:abs}-\eqref{eq:rom:Wh:PG}, to avoid working with reduced systems with condition number $10^{11}$.  
Similarly, it is important not to evaluate the residual for G-Res using the variant strategy in Appendix \ref{app:variant:3}.

\begin{figure}[ht]
\centering{
\begin{subfigure}{0.45\textwidth}
\includegraphics[width=\linewidth]{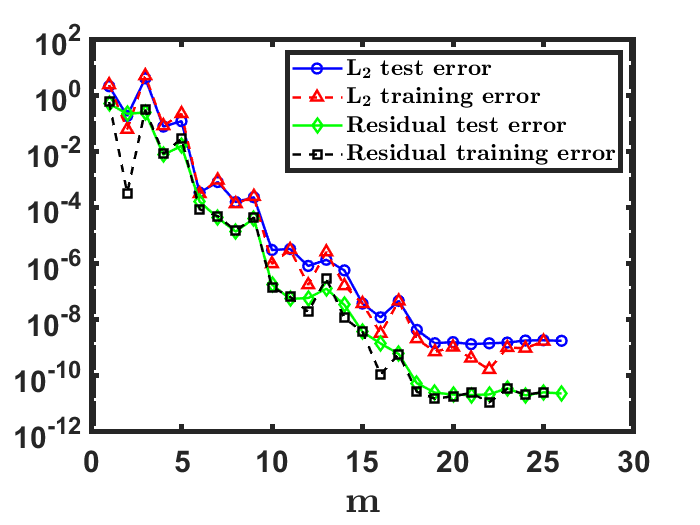}
\caption{}
\end{subfigure}
\begin{subfigure}{0.45\textwidth}
\includegraphics[width=\linewidth]{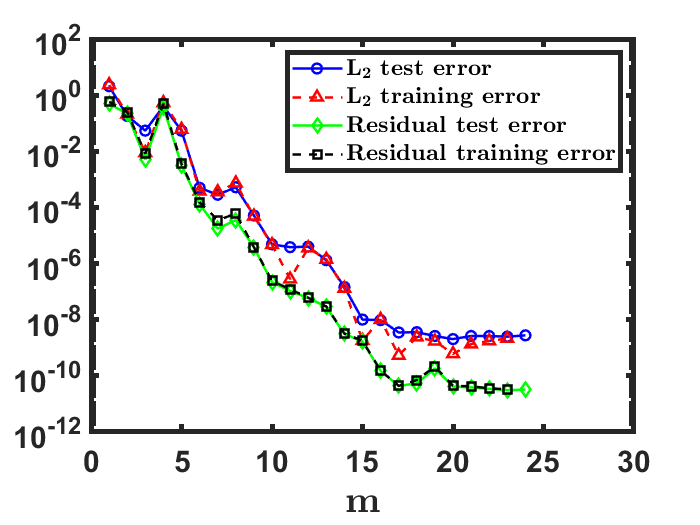}
\caption{}
\end{subfigure}

\begin{subfigure}{0.45\textwidth}
\includegraphics[width=\linewidth]{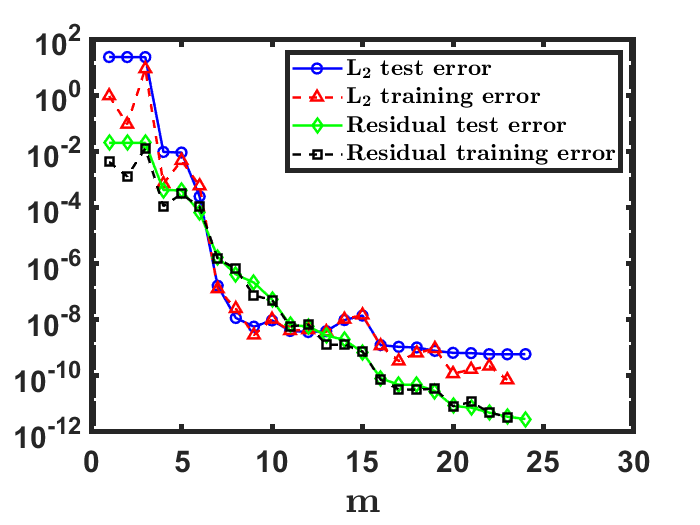}
\caption{}
\end{subfigure}
\begin{subfigure}{0.45\textwidth}
\includegraphics[width=\linewidth]{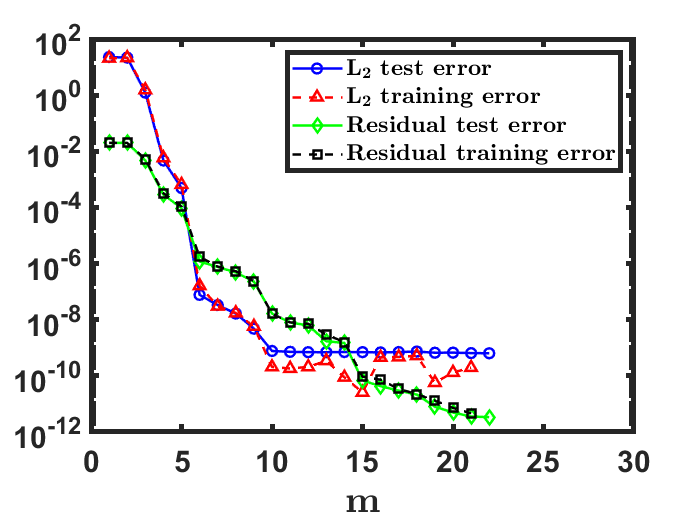}
\caption{}
\end{subfigure}
}
\caption{1D slab geometry: Spatially varying scattering. Training and test errors using (a) G-$L_1$, (b) G-Res, (c) PG-$L_1$, (d) PG-Res.}
\label{fig:1d Spatially Varying Scattering Training and Test Errors}

\end{figure}

\begin{figure}[h]
\centering{
\begin{subfigure}{0.3\textwidth}
\includegraphics[width=\linewidth]{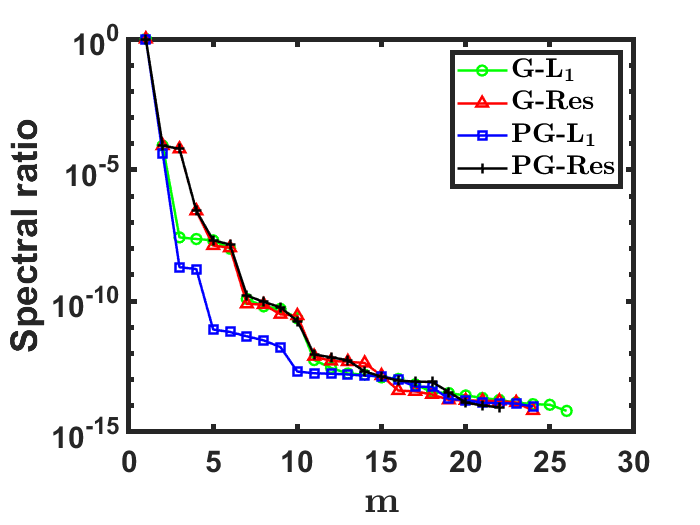}
\caption{}
\end{subfigure}
\begin{subfigure}{0.3\textwidth}
\includegraphics[width=\linewidth]{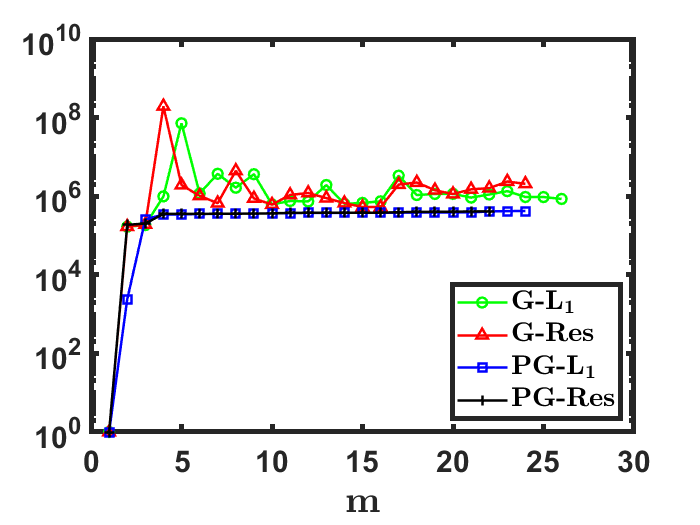}
\caption{}
\end{subfigure}
\begin{subfigure}{0.3\textwidth}
\includegraphics[width=\linewidth]{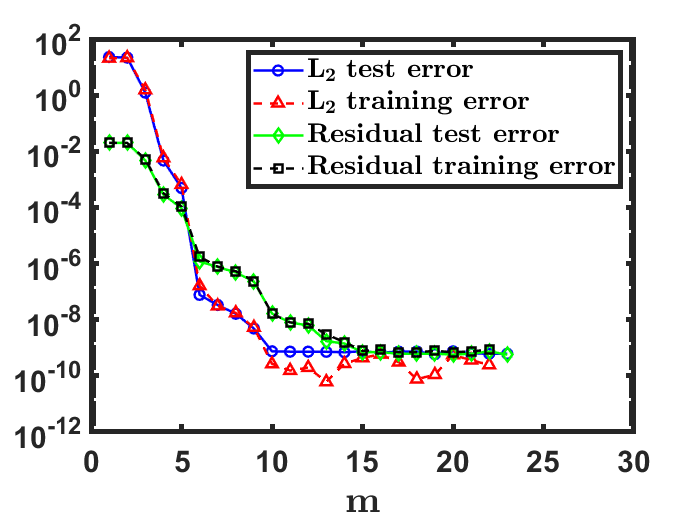}
\caption{}
\end{subfigure}
}
\caption{1D slab geometry: Spatially varying scattering. (a) Histories of the spectral ratio. (b) Maximum condition numbers of reduced system matrices over test parameter set $\mP_{\textrm{test}}$. (c)  Training and test errors using PG-Res and the variant implementation strategy Alg. \ref{alg:pLS:offline}$^\prime$ - Alg. \ref{alg:pLS:online} in Appendix \ref{app:variants:1} {and Eqn. \eqref{eq:Res-Alt1}} to evaluate the residual.
}
\label{fig:1d Spatially Varying Scattering Spectral Ratio and Max Cond(ROM) and PG-Res alternative strategy}
\end{figure}

Figure \ref{fig:1d Spatially Varying Scattering Points Picked in Greedy Search} shows the first 10 parameter points picked in the offline stage of the four ROMs. 
In both G-Res and PG-Res, extreme points are chosen early on. It is relatively less so for G-$L_1$ and PG-$L_1$. For PG-$L_1$, the first few selected parameter points all have their second component $\parameter_s^{(2)} \approx 95$, and the second selected point almost coincides with the initial $\parameter_1$. All of these may partially contribute to the dips in the training errors compared with the test errors of this ROM during early iterations (see Figure \ref{fig:1d Spatially Varying Scattering Training and Test Errors}-(c)).

\begin{figure}[ht]
\centering{
\includegraphics[width=0.24\textwidth, height=1.3in]
{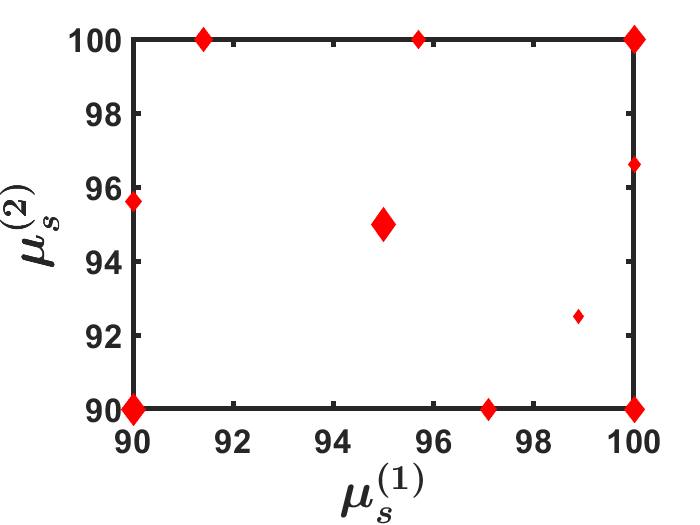}
\includegraphics[width=0.24\textwidth, height=1.3in]{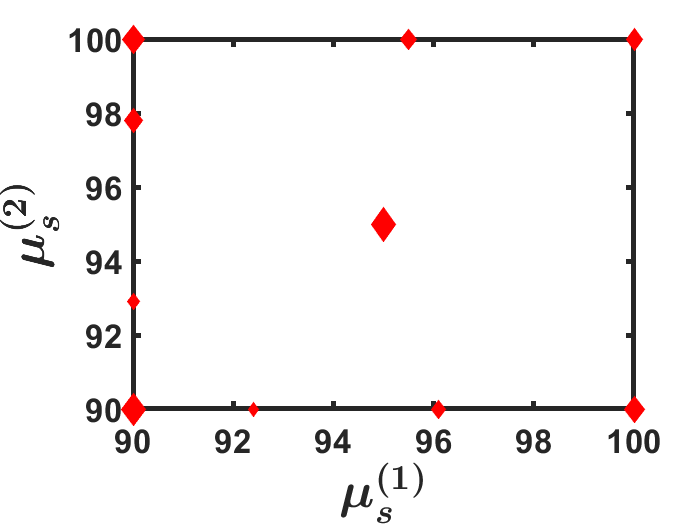}
\includegraphics[width=0.24\textwidth, height=1.3in]
{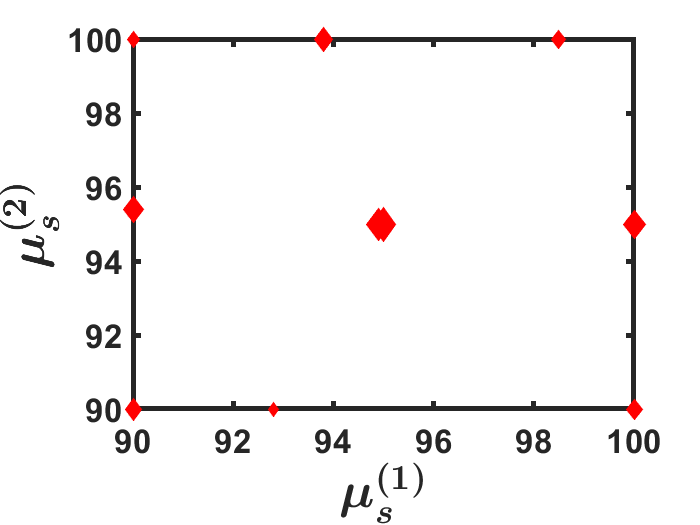}
\includegraphics[width=0.24\textwidth, height=1.3in]
{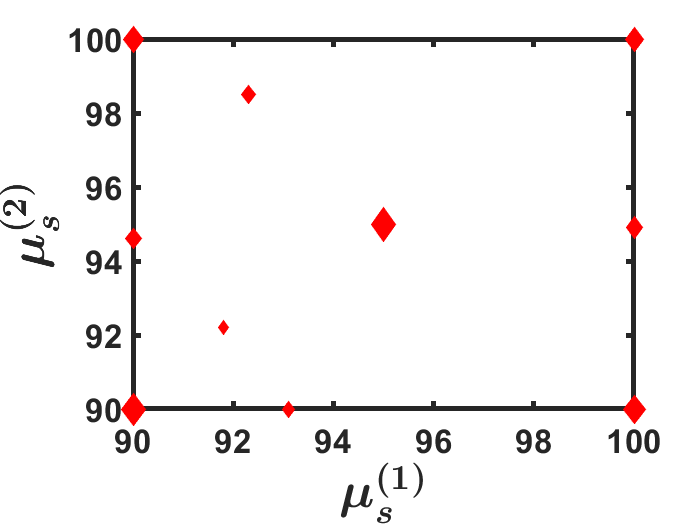}
}
\caption{1D slab geometry: Spatially varying scattering. From left to right: first 10 parameter points picked in greedy search using G-$L_1$, G-Res,  PG-$L_1$, PG-Res, respectively.}
\label{fig:1d Spatially Varying Scattering Points Picked in Greedy Search}

\end{figure}

For this example, we also demonstrate {the performance of} PG-Res implemented by a variant strategy, consisting of Alg. \ref{alg:pLS:offline}$^\prime$ - Alg. \ref{alg:pLS:online} from  Appendix  \ref{app:variants:1} and  the residual evaluation (in both the error indicator and the residual error computation) {based on Eqn. \eqref{eq:Res-Alt1}.}   The resulting training and test errors are reported in Figure   \ref{fig:1d Spatially Varying Scattering Spectral Ratio and Max Cond(ROM) and PG-Res alternative strategy}-(c). Very similar observations as in the previous section can be made, namely the residual errors plateau when $\rbdim \geq 15$, while those  in Figure \ref{fig:1d Spatially Varying Scattering Training and Test Errors}-(d) continue to decrease for such $\rbdim$. This shows that our chosen implementation strategy is numerically more robust. This residual error stagnation, occurring at magnitude $10^{-9}$, is not a practical concern.

Overall, for this example, all four ROMs perform well, with PG-Res most preferable due to its robustness and the relatively low reduced dimension needed to achieve high resolution/fidelity.

 \subsection{2D2v examples}
\label{sec:num:2D2v}
We next study the performance of the proposed ROMs for the 2D2v model (with $d_{\x} = 2$, $d_{\vel} = 3$) in which $\x = (x,y) \in \xset = [x_L,x_R] \times [y_L,y_R]$.
For all numerical examples, a uniform mesh of $N_x \times N_y$ elements is used for the spatial domain where $N_x$ and $N_y$ denote the number of elements in the $x$ and $y$ direction, respectively.
As discussed in Appendix \ref{app:impt:cost:FOM}, the FOM will be solved iteratively by the source iteration scheme accelerated by diffusion synthetic acceleration (SI-DSA). Unless otherwise specified, the zero initial guess is taken for $\macrovec$ along with $tol_{SI} = 10^{-12}$
as the stopping tolerance for the iterative solver. Table \ref{tab:average iterations SI-DSA:rho0} summarizes the average number\footnote{The average number of iterations is computed over the test parameter set $ \mP_{\textrm{test}}$ specified in each example.} of iterations required for SI-DSA to converge when solving the examples presented in this section. 

\begin{table}[!h] 
\centering 
\caption{Average number of iterations required for SI-DSA to converge, 
with the zero initial guess $\macrovec=\mathbf{0}$ and $tol_{SI} = 10^{-12}$. Lattice: $N_x = N_y = 70$ with $(40,6)$ CL-quadrature. Line Source and Pin-Cell: $N_x = N_y = 80$ with $(30,6)$ CL-quadrature.} 
\begin{tabular}{|c|c|c|}\hline 
19 (Lattice) & 17.15 (Line Source) & 40.99 (Pin-Cell)  \\
\hline
\end{tabular}
\label{tab:average iterations SI-DSA:rho0}
\end{table}

\subsubsection{Lattice example}
\label{sec:lattice}

In this subsection, we consider a lattice example on $\xset = [-3.5,3.5]\times [-3.5,3.5]$ with zero inflow boundary conditions.
Figure \ref{fig:2d Lattice Layout-2d Lattice Spectral Ratio}-(a) displays how $\scat(\x;\parameter)$, $\absorp(\x;\parameter)$, $\source(\x)$ are defined in this example. 
In particular, the scattering and absorption cross sections are parametrized by $\parameter=(\parameter_s,\parameter_a)$, namely, 
\[
(\scat(\x;\parameter),\absorp(\x;\parameter))
=
\begin{cases}
(0,\parameter_a) & \textrm{in black region}\\
(\parameter_s,0) & \textrm{elsewhere}
\end{cases},
\]
with $(\parameter_s,\parameter_a) \in \mP = [0.5,1.5] \times [8,12]$.
The training set $\mP_{\textrm{train}}$ consists of $21 \times 21$ equally spaced points in $\mP$ and the test set $\mP_{\textrm{test}}$ consists of $10 \times 10$ randomly chosen points uniformly sampled from $\mP$.
Unless otherwise specified, a uniform spatial mesh with $N_x = N_y = 70$ is used with $(40,6)$-CL quadrature for the angular discretization, and this corresponds to $\mN = (K+1)^2*N_x*N_y*N_{\theta}*N_{\xi} = 4.704\times 10^6$. We take $\text{tol}_{SRatio}=10^{-9}$ 
to terminate the offline stage of the RBM.

\begin{figure}[ht]
\centering{
\begin{subfigure}{0.4\textwidth}
\includegraphics[width=\linewidth]{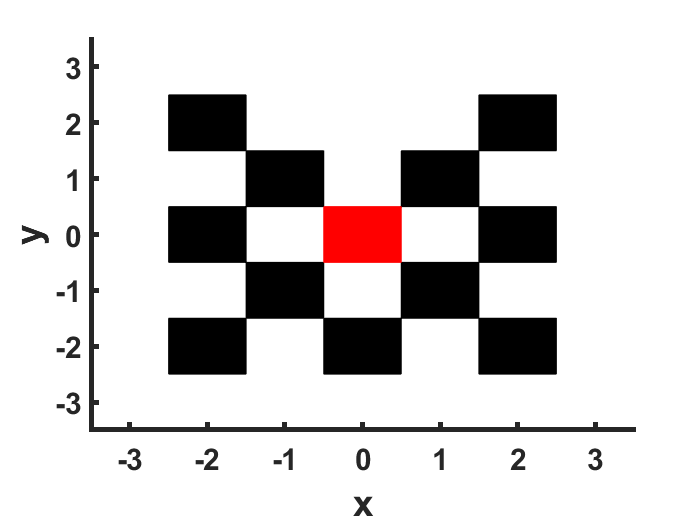}
\caption{}
\end{subfigure}
\begin{subfigure}{0.4\textwidth}
\includegraphics[width=\linewidth]{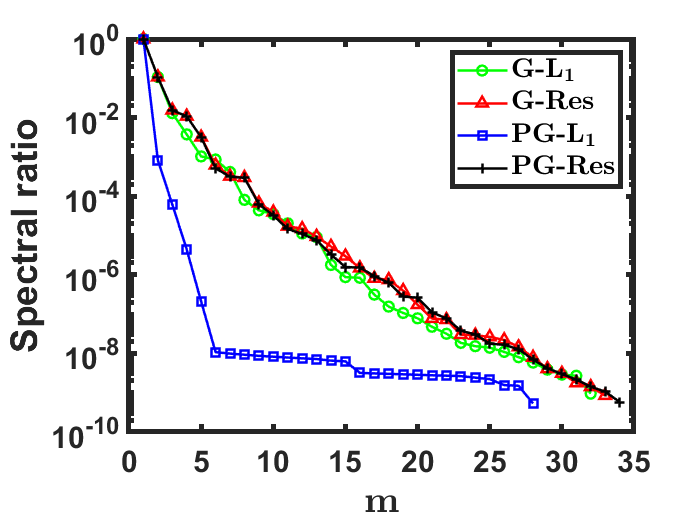}
\caption{}
\end{subfigure}
}
\caption{2D2v example: Lattice. (a) Material property.  Black: $(\scat,\absorp) = (0,\parameter_a)$, $\source(\x) = 0$. White: $(\scat,\absorp) = (\parameter_s,0)$, $\source(\x) = 0$. Red: $(\scat,\absorp) = (\parameter_s,0)$, $\source(\x) = 1$. (b) Histories of the spectral ratio.}
\label{fig:2d Lattice Layout-2d Lattice Spectral Ratio}
\end{figure}

 We implement all four ROMs, namely, G-$L_1$, G-Res, PG-$L_1$, PG-Res, with their $L_2$ and residual type training and test errors presented in Figure \ref{fig:2d Lattice Training and Test Errors}, and their spectral ratios $r^{(\rbdim)}$ in  Figure \ref{fig:2d Lattice Layout-2d Lattice Spectral Ratio}-(b),
all as a function of the reduced dimension $\rbdim$.
In Figure \ref{fig:2d Lattice Points Picked in Greedy Search}, we further present the first 10 parameter points that are picked in the offline stage of the four ROMs.

\begin{figure}[ht]
\centering{
\begin{subfigure}{0.45\textwidth}
\includegraphics[width=\linewidth]{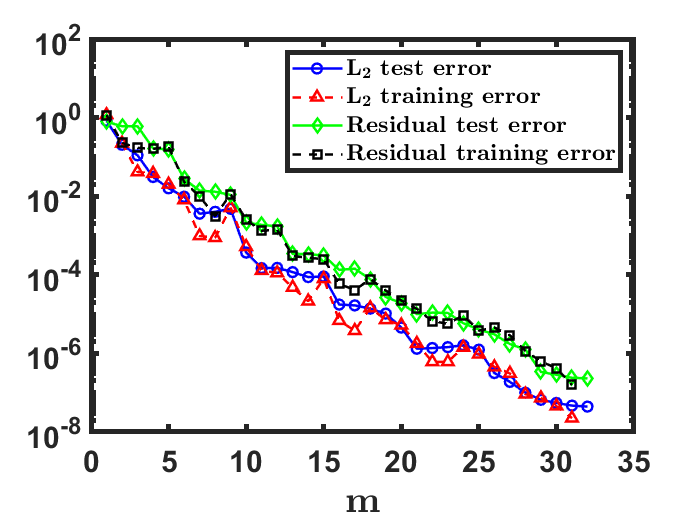}
\caption{}
\end{subfigure}
\begin{subfigure}{0.45\textwidth}
\includegraphics[width=\linewidth]{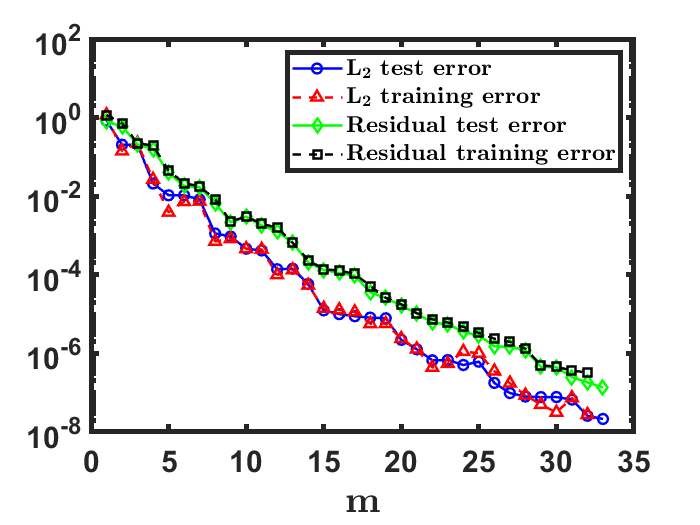}
\caption{}
\end{subfigure}
}

\begin{subfigure}{0.45\textwidth}
\includegraphics[width=\linewidth]{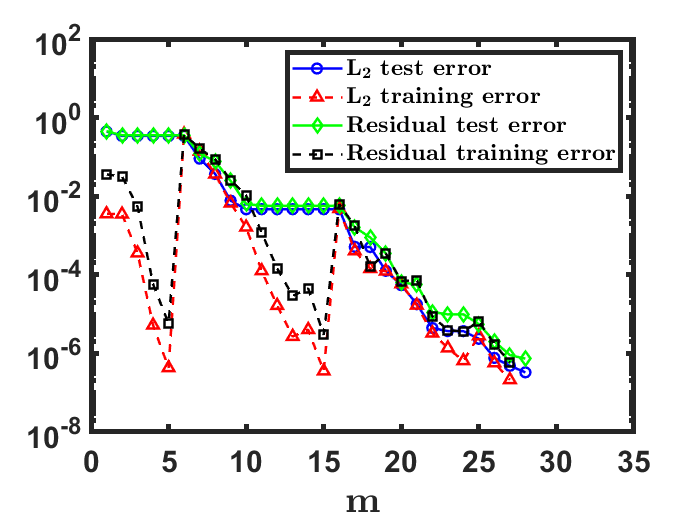}
\caption{}
\end{subfigure}
\begin{subfigure}{0.45\textwidth}
\includegraphics[width=\linewidth]{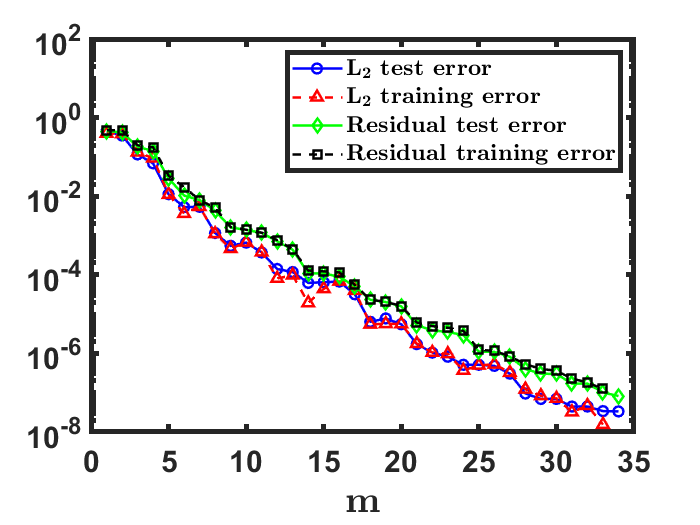}
\caption{}
\end{subfigure}
\caption{2D2v example: Lattice. Training and test errors using (a) G-$L_1$, (b) G-Res, (c) PG-$L_1$, (d) PG-Res. {See Figure \ref{fig:2d Lattice First Point Bottom Left Corner} for improved PG-$L_1$ results.}}
\label{fig:2d Lattice Training and Test Errors}

\end{figure}

\begin{figure}[ht]
\centering{
\includegraphics[width=0.24\textwidth, height=1.3in]
{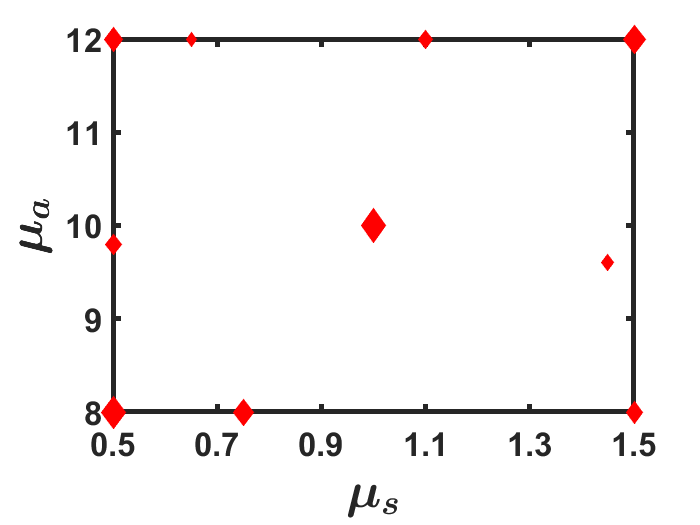}
\includegraphics[width=0.24\textwidth, height=1.3in]
{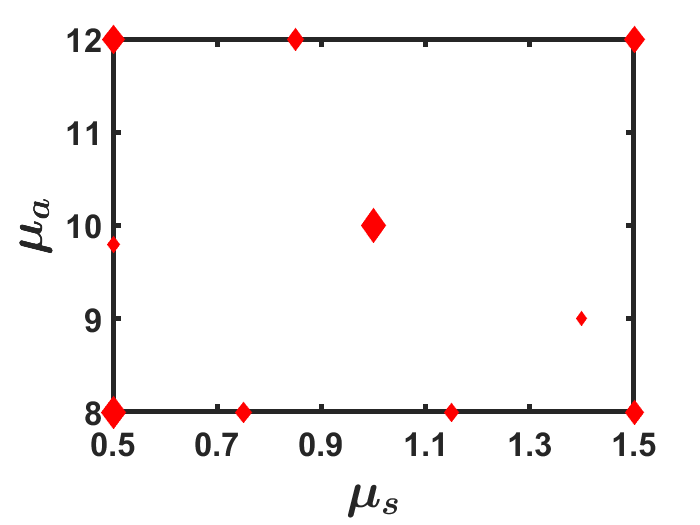}
\includegraphics[width=0.24\textwidth, height=1.3in]
{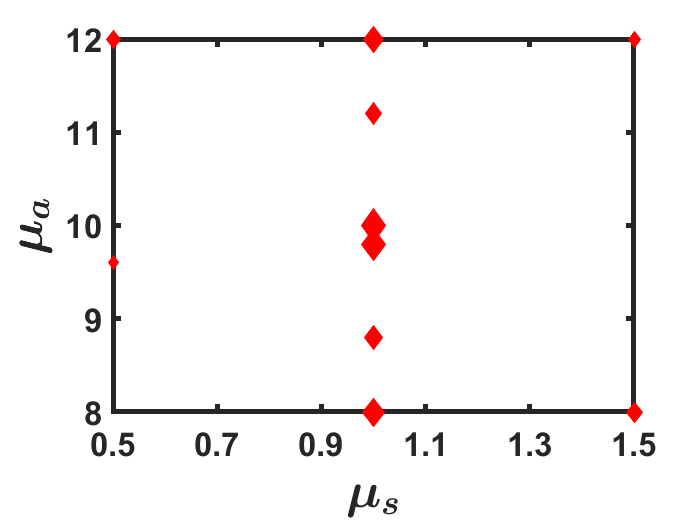}
\includegraphics[width=0.24\textwidth, height=1.3in]
{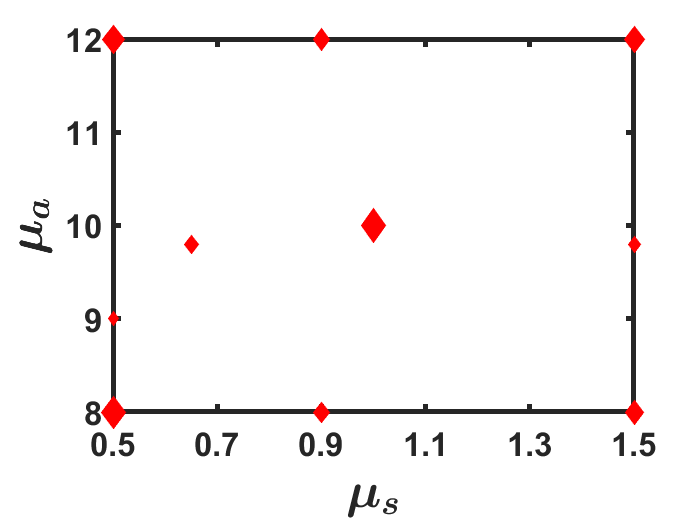}
}
\caption{2D2v example: Lattice. From left to right: first 10 parameter points picked in greedy search using G-$L_1$, G-Res, PG-$L_1$, PG-Res, respectively.}
\label{fig:2d Lattice Points Picked in Greedy Search}

\end{figure}

We want to start with PG-$L_1$. First of all, the severe dips in the training errors and the plateaus in the respective testing errors 
by PG-$L_1$ make it a relatively less efficient surrogate solver compared with the other three ROMs in order to achieve  relatively lower resolution/fidelity. For example, to achieve $L_2$ test errors around $10^{-1}$,  $10^{-2}$, $10^{-3}$, PG-$L_1$ needs to be of dimension 7, 9, 17, respectively, while the other three ROMs need to be of lower dimension around 3-4, 5-6, 8-10, respectively. The difference in these required reduced dimensions by all four ROMs becomes smaller for relatively higher resolution, e.g. for test errors to be around $10^{-4}$ and lower. The selected parameter values by PG-$L_1$ in Figure \ref{fig:2d Lattice Points Picked in Greedy Search} also show a much more imbalanced distribution during the early greedy selection. They are much less independent, and this likely explains the unusually fast decay of the spectral ratio. This test  reinforces that the spectral ratio cannot fully inform us the resolution of a ROM.

On the other hand, the remaining three ROMs, namely G-$L_1$, G-Res, PG-Res, all perform well, with some similar observations as {1D examples in slab geometry}  in Section \ref{sec:num:1D1v}, 
particularly regarding the overall decreasing trends of the training and testing errors at exponential decay rates as $\rbdim$ grows,
the training and test errors of the same type being at comparable magnitude for each ROM, the training and test errors of the residual
type by PG-Res decreasing monotonically, and the greedy selection favoring ``extreme" parameter values during the early iterations.  For each of these three ROMs, there are dips in the training errors from the respective test errors, with those by G-$L_1$ more noticeable, evidencing that the selected points are not optimal. Given that these dips are small, they will not be of real concern in terms of the overall efficiency and resolution of these ROMs.

\medskip
\noindent{\bf{Improved PG-$L_1$.}}
A natural question is whether PG-$L_1$ can be improved by starting from a different initial parameter value $\parameter_1$. Informed by Figure \ref{fig:2d Lattice Points Picked in Greedy Search}, particularly the selected $\parameter_2$ in the other three ROMs,   PG-$L_1$ is re-trained/re-built by using $\parameter_1=(\parameter_s, \parameter_a)\left|_{(0.5, 8)}\right.$, with the training and test errors as well as the first 10 selected parameter points
presented in Figure \ref{fig:2d Lattice First Point Bottom Left Corner} {(top row).}  Though fairly big dips are still present in training errors during the early greedy iterations, with the respective selected parameter values (during early greedy iterations) showing unbalanced distribution,  
PG-$L_1$ starting with this $\parameter_1=(0.5, 8)$ is greatly improved. In fact the required reduced dimension $\rbdim$ by all ROMs are more comparable when the test errors are below $10^{-2}$. This study shows that the efficiency of the PG-$L_1$ with low reduced dimension  
is more sensitive to the initial selection of the parameter value $\parameter_1$ compared with the other three ROMs. In other words, the impact of the {selection of the first parameter} value on the performance of G-$L_1$, G-Res, PG-Res seems to damp faster as $\rbdim$ grows.

The improvement above is built upon the knowledge that the parameter value $\parameter=(0.5, 8)$ is important. Such {\it a priori} knowledge in general is unavailable. We further re-build PG-$L_1$ via a more general strategy, namely by using {\it the enhanced 2-point $L_1$ error indicator} described in Appendix \ref{app:enhancedL1}, with the  training and test errors as well as the first 10 selected parameter points
presented in  
Figure \ref{fig:2d Lattice First Point Bottom Left Corner} {(bottom row).} The first parameter value $\parameter_1$ is chosen as the geometric center. Though there is a noticeable dip in the training errors in the first few greedy iterations, and there are noticeable oscillations in the $L_2$ training errors, both residual and $L_2$ test errors 
display consistent monotonicity as the reduced dimension grows.  Note that the terminal reduced dimension in Figure \ref{fig:2d Lattice First Point Bottom Left Corner}-(c) is lower than that in Figure \ref{fig:2d Lattice First Point Bottom Left Corner}-(a) by 5. The offline efficiency to build the ROMs along with their resolution for online prediction are greatly improved. In addition, the first 10 parameter values that are greedily selected are distributed in a much more balanced pattern. The improvement in this experiment comes at a price of an almost  double yet affordable offline training cost. (One can refer to Appendix \ref{app:enhancedL1}.)  As to be seen next, the offline training cost is dominated by that of FOM solves.

It is informative to observe the  overall balanced distributions of the first 20 greedily selected parameter values in Figure \ref{fig:2d Lattice 20 Points Greedy Search} by PG-$L_1$ with $\parameter_1=(\parameter_s,\parameter_a) = (0.5,8)$,  by PG-$L_1$ with the enhanced 2-point $L_1$ error indicator and $\parameter_1$ as geometric center of $\mP_{\textrm{train}}$, {compared to that of} 
G-Res with $\parameter_1$ as geometric center of $\mP_{\textrm{train}}$.

\begin{figure}[ht]
\centering{
\begin{subfigure}{0.36\textwidth}
\includegraphics[width=\linewidth]{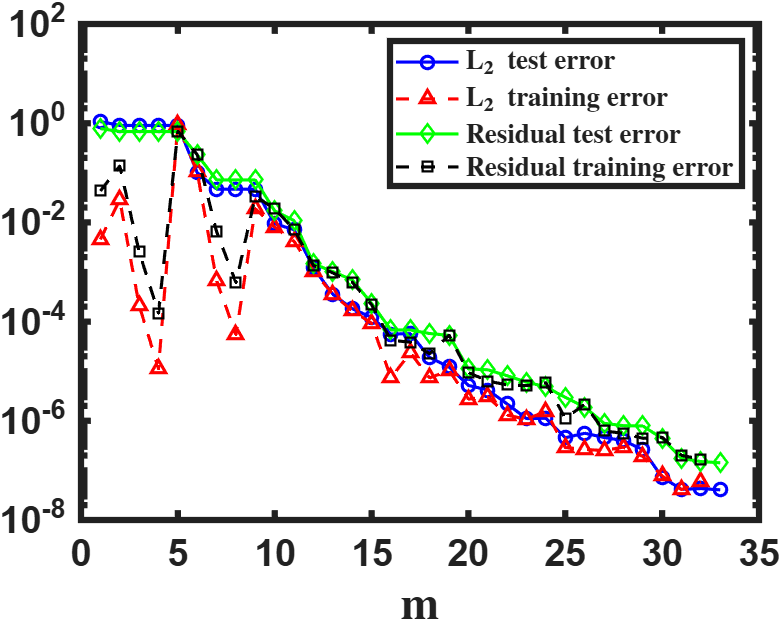}
\caption{}
\end{subfigure}
\begin{subfigure}{0.4\textwidth}
\includegraphics[width=\linewidth]{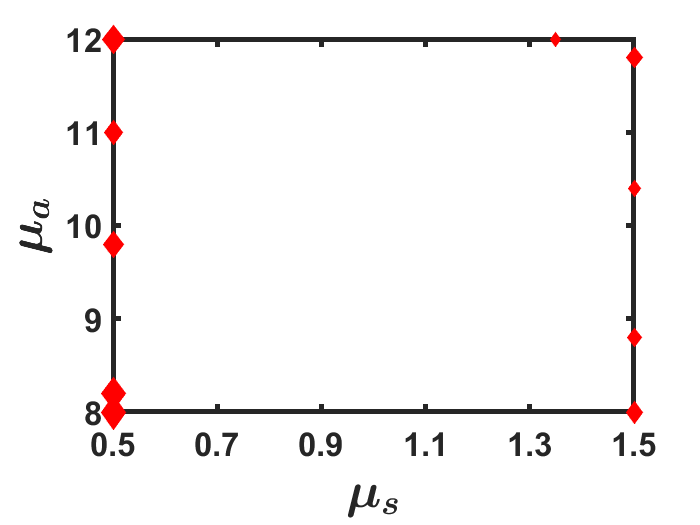}
\caption{}
\end{subfigure}
\begin{subfigure}{0.36\textwidth}
\includegraphics[width=\linewidth]{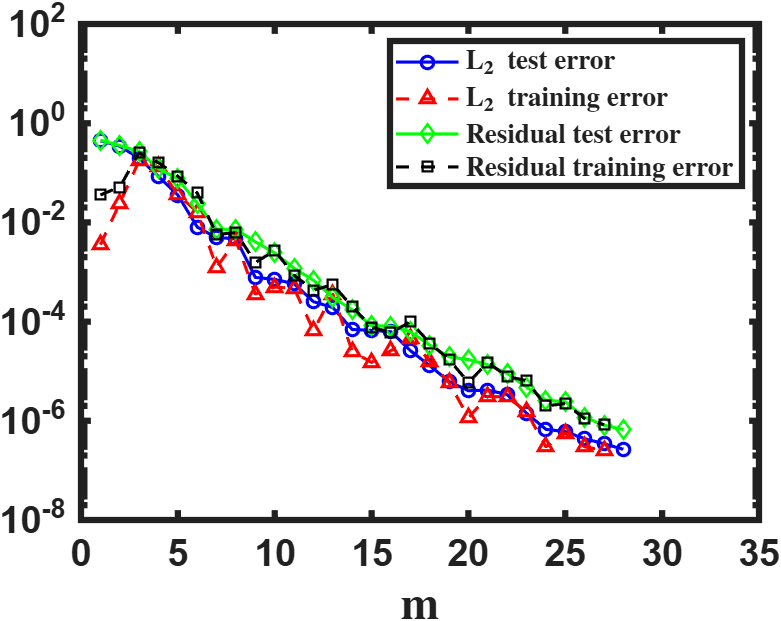}
\caption{}
\end{subfigure}
\begin{subfigure}{0.4\textwidth}
\includegraphics[width=\linewidth]{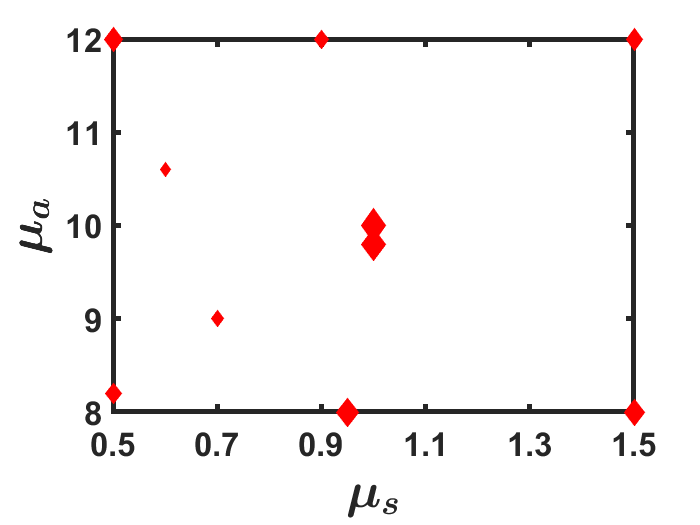}
\caption{}
\end{subfigure}
}
\caption{2D2v example: Lattice, with improved PG-$L_1$. {Top row: PG-$L_1$ with the first parameter value $\parameter_1=(\parameter_s,\parameter_a) = (0.5,8)$. Bottom row: PG-$L_1$ with enhanced 2-point $L_1$ error indicator. Left column:  Training and test errors.  
Right column: First 10 parameter points picked in greedy search. }}
\label{fig:2d Lattice First Point Bottom Left Corner}
\end{figure}

\begin{figure}[ht]
\centering{
\begin{subfigure}{0.32\textwidth}
\includegraphics[width=\linewidth]{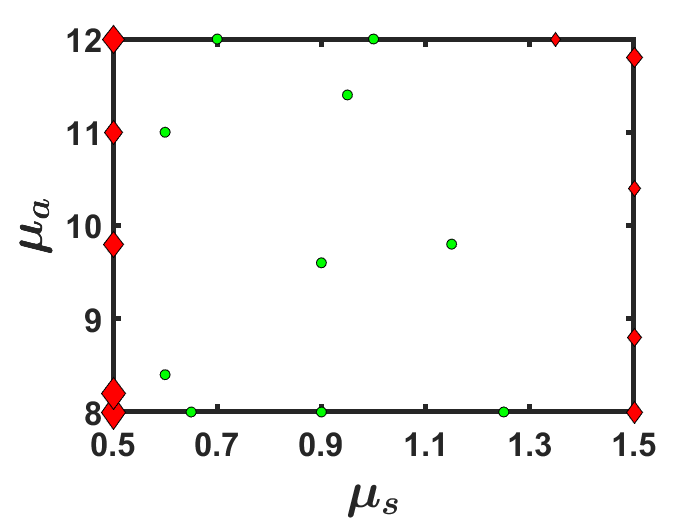}
\caption{}
\end{subfigure}
\begin{subfigure}{0.32\textwidth}
\includegraphics[width=\linewidth]{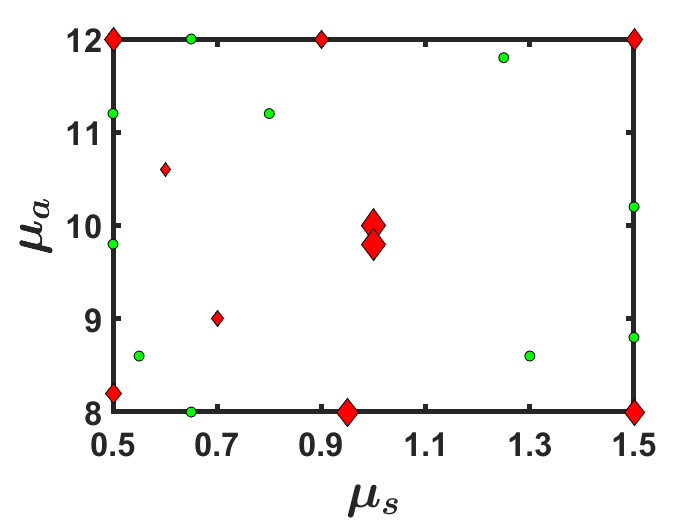}
\caption{}
\end{subfigure}
\begin{subfigure}{0.32\textwidth}
\includegraphics[width=\linewidth]{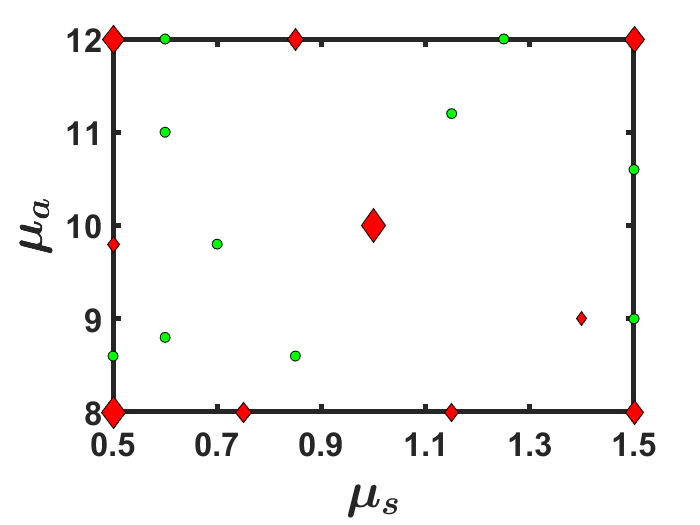}
\caption{}
\end{subfigure}
}
\caption{2D2v example: Lattice. First 20 parameter points picked in greedy search using (a) PG-$L_1$ with $\parameter_1=(\parameter_s,\parameter_a) = (0.5,8)$, (b) PG-$L_1$ with enhanced 2-point $L_1$ error indicator and $\parameter_1$ as geometric center of $\mP_{\textrm{train}}$, (c) G-Res with $\parameter_1$ as geometric center of $\mP_{\textrm{train}}$. Red: the first 10 selected points. Green: the next 10 selected points.}
\label{fig:2d Lattice 20 Points Greedy Search}
\end{figure} 

\medskip
\noindent{\bf Study of offline and online computational costs.}
This lattice  example is also used to study the computational costs of the proposed ROMs in comparison to the FOM.  
Two tests are conducted to examine the offline costs to build ROMs of different reduced dimension $\rbdim$ and to showcase the significant savings of ROMs to predict the solutions.
In both tests, the results for PG-$L_1$ are generated when using $\parameter_1=(\parameter_s, \parameter_a)\left|_{(0.5, 8)}\right.$ as the initial parameter value and the standard $L_1$ error indicator.
We want to point out that our algorithms are implemented in MATLAB on a personal laptop. 
As a high-level interpreted language, MATLAB does not allow users to have control over many implementation details, hence one cannot take all the costs presented here too absolutely. We will focus on those observations that we believe are robust and contribute to/complement  our understanding of the proposed ROMs.

\medskip
\noindent{\bf Test 1.} In this test, we focus on the offline training stage of the RBM, and compare in Figure \ref{fig:2d Lattice FOM solve time vs ROM offline time} the computational time (i.e. wall-clock time) to construct each ROM of dimension $\rbdim$ with that to run $N_{run}=\rbdim$ FOM solves at the selected parameter values during the ROM building process, with $\rbdim$ ranging from $1$ to $30$.  The solid line in each subfigure is for FOM solves, while the dashed lines are for building ROMs with a few representative $\rbdim=5, 15, 25, 30$. 

First of all, for each ROM, we want to find out when the crossover occurs between each dashed line and the solid  line. Recall that in order to construct each ROM($\rbTrial^{\rbdim};\cdot$),  FOM solvers will be called $\rbdim$ times at $\rbdim$ greedily selected parameter values. For G-$L_1$ of dimension $\rbdim=5, 15, 25, 30$, the crossover happens almost at $N_{run}=\rbdim$. This shows that the cost of the offline training stage is dominated by the FOM solvers, and the offline training stage is extremely efficient. 
For G-Res, PG-$L_1$, and PG-Res, the crossover happens around $N_{run}$ (e.g., $N_{run}=\rbdim, \rbdim+1, \rbdim+2$), again supporting that the offline training stage is very efficient. 

Related, we also present in Figure \ref{fig:2d Lattice-Line Source ROM offline times}-(a) the history of the computational times to build each ROM of growing dimension. The growth rates in the cost seem to be linear at least up to $\rbdim=30$, again indicating the computational cost is dominated by that of FOM solves when one refers to the theoretical computational complexity in  \eqref{eq:final:cost:off}. 

From  Figure \ref{fig:2d Lattice FOM solve time vs ROM offline time} and Figure \ref{fig:2d Lattice-Line Source ROM offline times}-(a), one can see that the times to build all four ROMs of the same reduced dimension do not differ significantly, with PG-$L_1$ {being} the fastest to build for this specific example. It is also observed that  it is relatively faster to build G-$L_1$ than to build G-Res, and it is faster to build PG-$L_1$ than to build PG-Res, as discussed in Remark \ref{rem:costComp:offline}, due to the fact that $L_1$ error indicators involve lower costs than residual-based error indicators. 

{Finally we present in Figure \ref{fig:2d Lattice-Line Source ROM offline times and errors}-(a) the computational time (sec) to build ROMs versus $L_2$ training errors
during the offline training stage. This is a different way to view the offline ($L_2$) training errors from Figure \ref{fig:2d Lattice Training and Test Errors}-(a)(b)(d),   Figure \ref{fig:2d Lattice First Point Bottom Left Corner}-(a) and the computational time from  Figure \ref{fig:2d Lattice-Line Source ROM offline times}-(a) as the reduced dimension $\rbdim$  grows. Qualitatively comparable rates of change are observed for all four ROMs, with no clear winner over the range of the reduced dimension  examined here.}

\begin{figure}[ht]
    \centering
    {
    \begin{subfigure}{0.24\textwidth}
    \includegraphics[width=\linewidth]{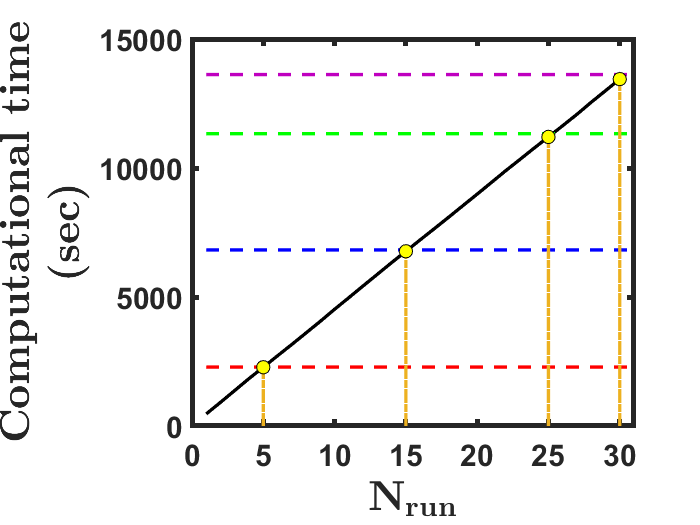}
    \caption{G-$L_1$}
    \end{subfigure}
    \begin{subfigure}{0.24\textwidth}
    \includegraphics[width=\linewidth]{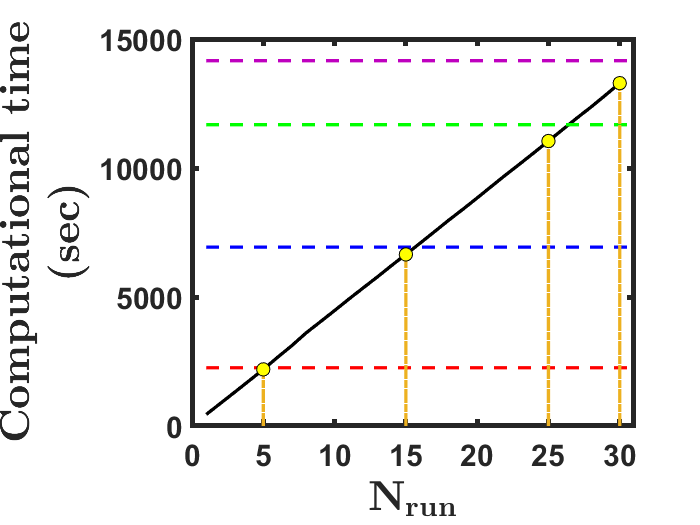}
    \caption{G-Res}
    \end{subfigure}
    \begin{subfigure}{0.24\textwidth}
    \includegraphics[width=\linewidth]{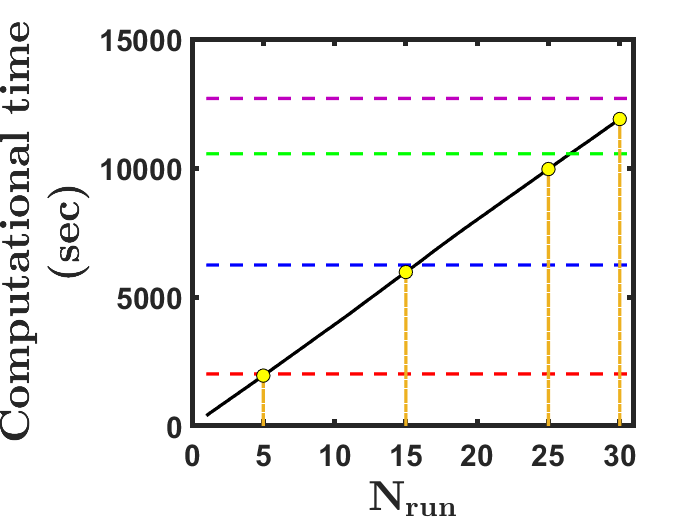}
    \caption{PG-$L_1$}
    \end{subfigure}
    \begin{subfigure}{0.24\textwidth}
    \includegraphics[width=\linewidth]{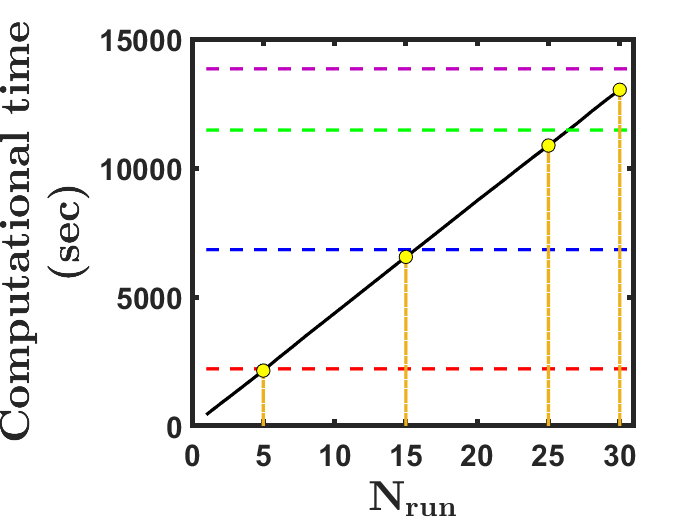}
    \caption{PG-Res}
    \end{subfigure}
    }
    \caption{2D2v example: Lattice. $\mN = 4.704 \times10^6$. Comparison between the computational time (sec)  to build each ROM($\rbTrial^{\rbdim};\cdot$) and the total computational time (sec)  of $N_{run}=\rbdim$ FOM solves at the selected parameters during the same process.   Solid line: FOM. Dashed line: ROM  of dimension $\rbdim=5, 15, 25, 30$ (from bottom to top).
 }
    \label{fig:2d Lattice FOM solve time vs ROM offline time}
\end{figure}

\begin{figure}[ht]
    \centering{
    \begin{subfigure}{0.45\textwidth}
\includegraphics[width=\linewidth]{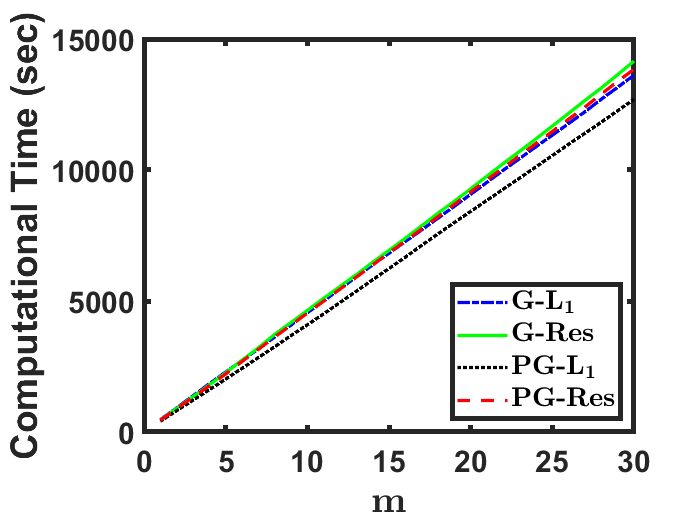}
        \caption{} 
    \end{subfigure}
     \begin{subfigure}{0.45\textwidth}  \includegraphics[width=\linewidth]{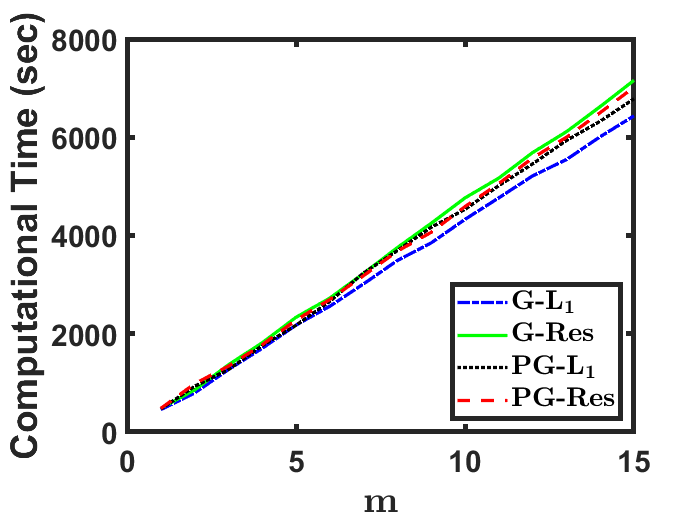}
        \caption{} 
    \end{subfigure}   
    }
    \caption{2D2v example: The history of the computational time (sec) to build ROMs of $\rbdim$ dimensions  using G-$L_1$, G-Res, PG-$L_1$, and PG-Res.  Left:  Lattice with $\mN = 4.704 \times10^6$. Right:  Line source with $\mN = 4.608 \times10^6$. }
    \label{fig:2d Lattice-Line Source ROM offline times}
\end{figure}

\begin{figure}[ht]
    \centering{
  \begin{subfigure}{0.45\textwidth}    \includegraphics[width=\linewidth] 
{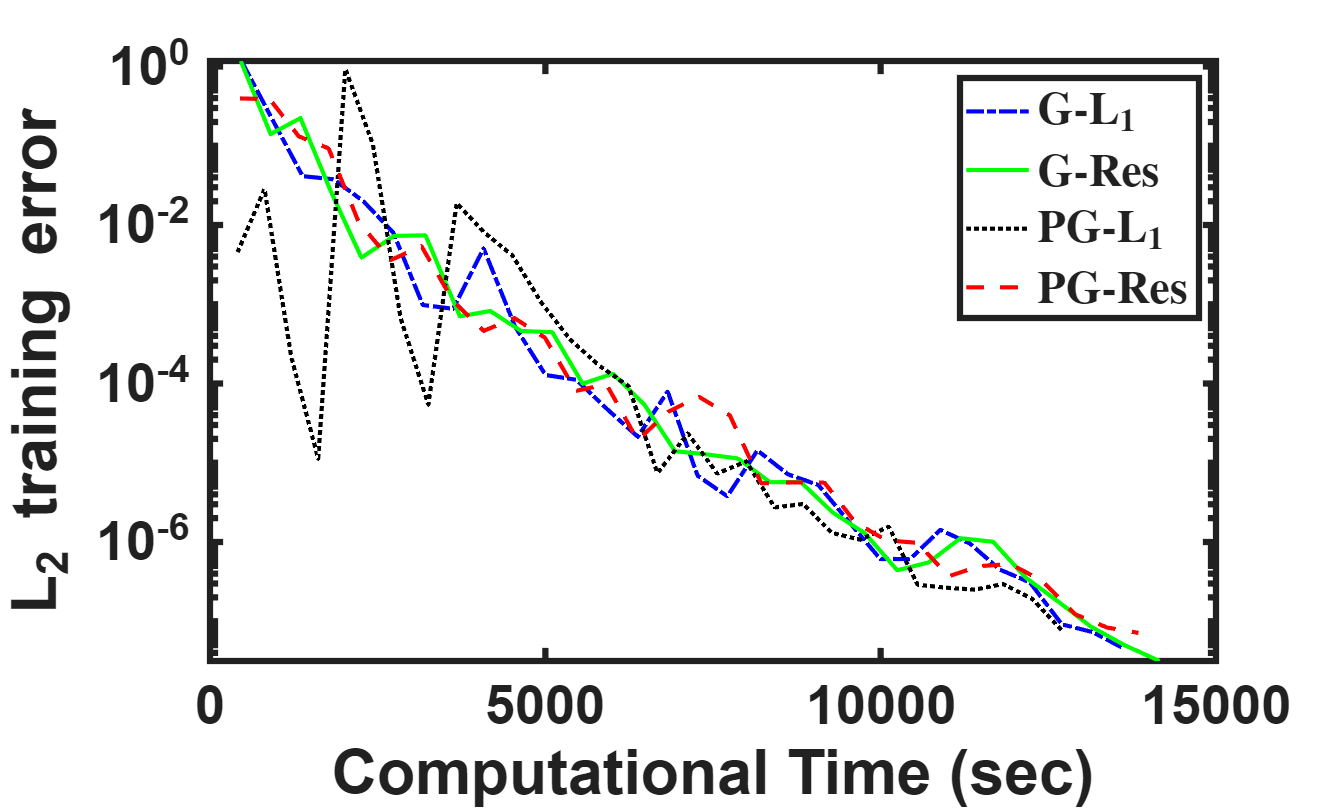}
    \caption{} 
    \end{subfigure}
     \begin{subfigure}{0.45\textwidth}   \includegraphics[width=\linewidth] 
{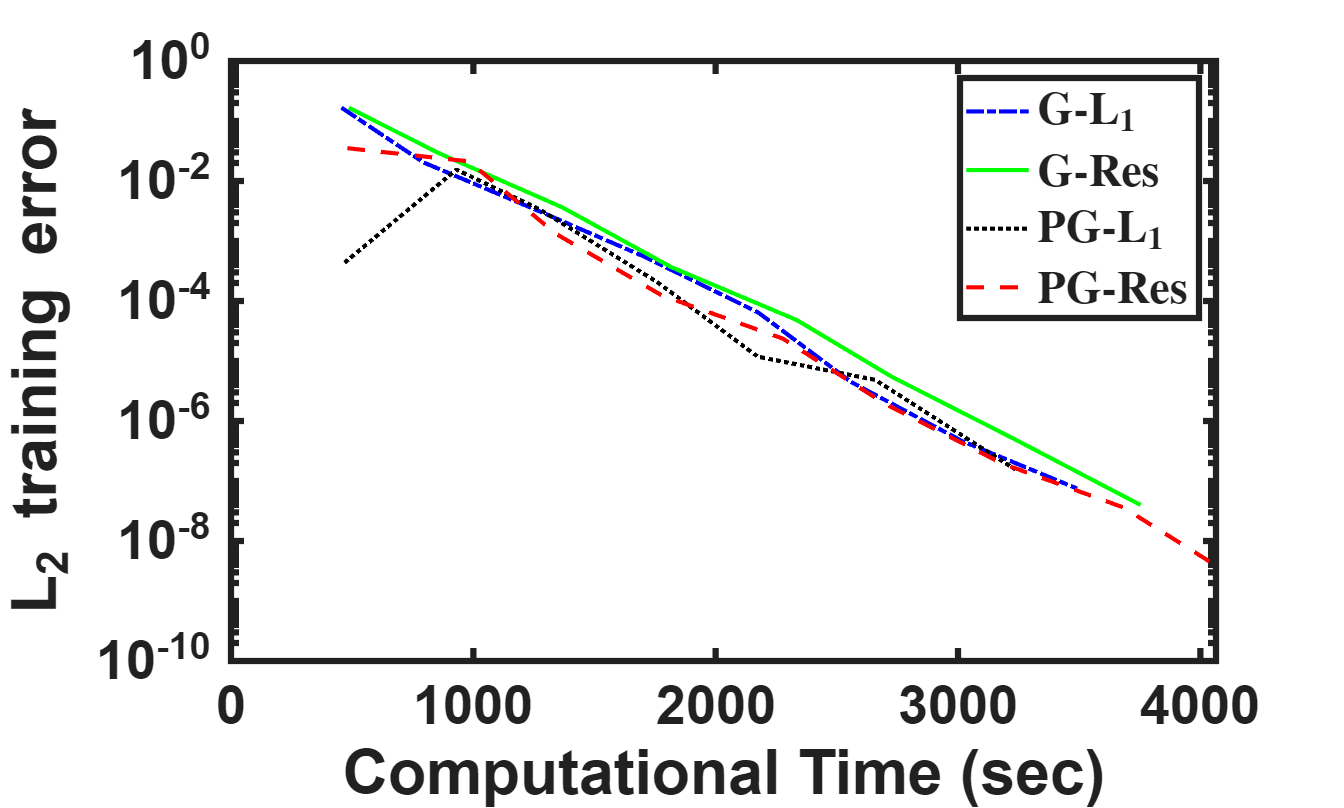}
    \caption{}
    \end{subfigure}
    }
    \caption{{2D2v example: The computational time (sec) to build ROMs versus $L_2$ training errors during the offline training stage as the reduced dimension $\rbdim$ grows.  Left:  Lattice with $\mN = 4.704 \times10^6$. Right:  Line source with $\mN = 4.608 \times10^6$. }}
    \label{fig:2d Lattice-Line Source ROM offline times and errors}
\end{figure}

\medskip
\noindent{\bf Test 2.} We next compare the computational costs of the proposed ROMs (with the reduced dimension $\rbdim=15$) and FOM during the online prediction stage. Note that for the online prediction, the reduced system matrices and data vectors have already been pre-computed and are available as indicated in Alg. \ref{alg:entireAlg}. Table \ref{tab:2d Lattice FOM solve time vs ROM online time} reports the averaged wall-clock times for each ROM and the wall-clock time of FOM to solve the RTE at $(\parameter_s,\parameter_a) = (0.6,11.5)$ and $(\parameter_s,\parameter_a) = (1.4,8.5)$ when $\mN = 20*6*4*N_x*N_y$ with $N_x = N_y = 21k$, $k = 1, 2, 3$.
Note that $20*6$ corresponds to $(20,6)$-CL quadrature  in angle used in this test. 
At each $\parameter$, the {presented wall-clock time of a ROM is the average over} 10 runs of the method.

As expected, the FOM solve time increases as the spatial mesh is refined. In particular, on the finest mesh with $\mN\sim 1.9 \times 10^6$, it takes about 3 minutes to run one FOM solve. On the other hand, the ROM solve times are around $10^{-5}$ seconds, with a $3\times 10^6$ speedup rate when using G-$L_1$ or G-Res, and they are about $10^{-4}$ seconds with a $6\times 10^5$ speedup rate when using PG-$L_1$ or PG-Res.  Unlike FOM, the computational times of each ROM are overall independent of $\mN$. Though our implementation in MATLAB on a personal laptop does not estimate the speedup rates more absolutely, one can be confident to conclude that these ROMs will result in significant cost savings over the FOM to simulate the parametric RTE, especially in more realistic settings with even larger full dimension $\mN$.

\begin{table}[h]
\caption{2D2v example: Lattice. Computational time (sec) to solve FOM and ROM with G-$L_1$, G-Res, PG-$L_1$, PG-Res at $(\parameter_s,\parameter_a)$ with different $\mN$ when $\rbdim = 15$.}
\vspace{0.1in} 
\centering 
\begin{tabular}{|c|c|c|c|c|c|c|}\hline 
$(\mu_s,\mu_a)$ & $\mathcal{N}$ & FOM & G-$L_1$ & G-Res & PG-$L_1$ & PG-Res \\ \hline 
\multirow{3}{4em}{(0.6,11.5)} & 2.12E+05 & 9.20E+00 & 6.31E-05 & 5.93E-05 & 2.04E-04 & 2.75E-04 \\ 
& 8.47E+05 & 6.00E+01 & 6.79E-05 & 6.33E-05 & 2.14E-04 & 2.70E-04 \\ 
& 1.91E+06 & 1.80E+02 & 6.18E-05 & 6.25E-05 & 2.18E-04 & 2.89E-04 \\ 
\hline 
\multirow{3}{4em}{(1.4,8.5)} & 2.12E+05 & 9.78E+00 & 5.48E-05 & 5.51E-05 & 2.10E-04 & 2.53E-04 \\ 
& 8.47E+05 & 5.60E+01 & 5.97E-05 & 5.82E-05 & 2.11E-04 & 2.68E-04 \\ 
& 1.91E+06 & 1.66E+02 & 5.83E-05 & 5.58E-05 & 2.04E-04 & 2.69E-04 \\ 
\hline 
\end{tabular} 
\label{tab:2d Lattice FOM solve time vs ROM online time}
\end{table}

\subsubsection{Line source}
\label{sec:line source}

In this subsection, we consider a line source example on $\xset = [0,1]\times [0,1]$ with zero absorption and zero inflow boundary conditions and $\source(\x) = \source(x,y) = \exp(-100((x-0.5)^2+(y-0.5)^2))$.
We treat the scattering cross section as the parameter such that $\scat(\x; \parameter) = \parameter_s$ with $\parameter_s \in \mP = [0.5,5]$.
The training set $\mP_{\textrm{train}}$ consists of $101$ equally spaced points in $\mP$ and the test set $\mP_{\textrm{test}}$ consists of $20$ randomly chosen points uniformly sampled from $\mP$.
A uniform spatial mesh with $N_x = N_y = 80$ is used with $(30,6)$-CL quadrature for the {angular discretization}, {and this} corresponds to
$\mN = 4.608*10^6$. We take $\text{tol}_{SRatio}=10^{-7}$ 
to terminate the offline stage of the RBM.

We implement all four ROMs, namely, G-$L_1$, G-Res, PG-$L_1$, PG-Res, with their $L_2$ and residual type training and test errors presented in Figure \ref{fig:2d Line Source Training and Test Errors}, and their spectral ratios $r^{(\rbdim)}$ in  Figure \ref{fig:2d Line Source Spectral Ratio Points Picked in Greedy Search-Pin Cell Spectral Ratio}-(a), all as a function of the reduced dimension $\rbdim$.
In Figure \ref{fig:2d Line Source Spectral Ratio Points Picked in Greedy Search-Pin Cell Spectral Ratio}-(b), we further present the first 10 parameter points that are picked in the offline stage to build each ROM.

\begin{figure}[h]
\centering{
\begin{subfigure}{0.45\textwidth}
\includegraphics[width=\linewidth]{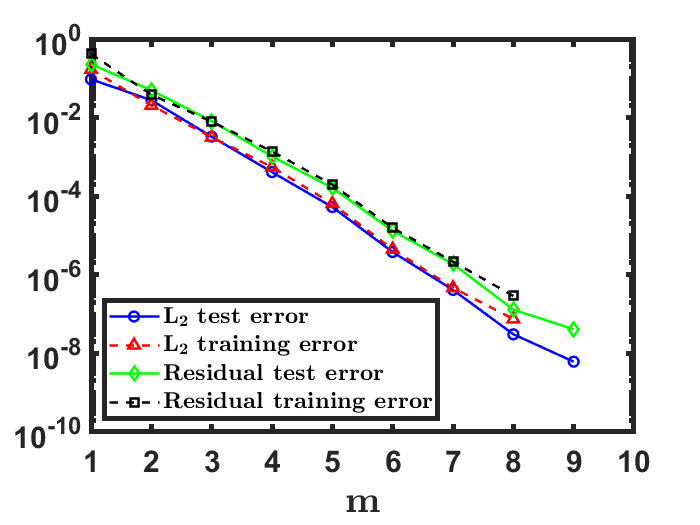}
\caption{}
\end{subfigure}
\begin{subfigure}{0.45\textwidth}
\includegraphics[width=\linewidth]{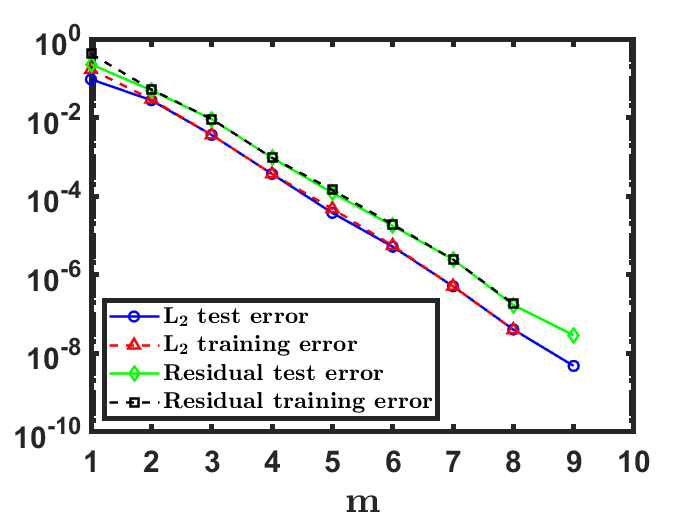}
\caption{}
\end{subfigure}
}

\begin{subfigure}{0.45\textwidth}
\includegraphics[width=\linewidth]{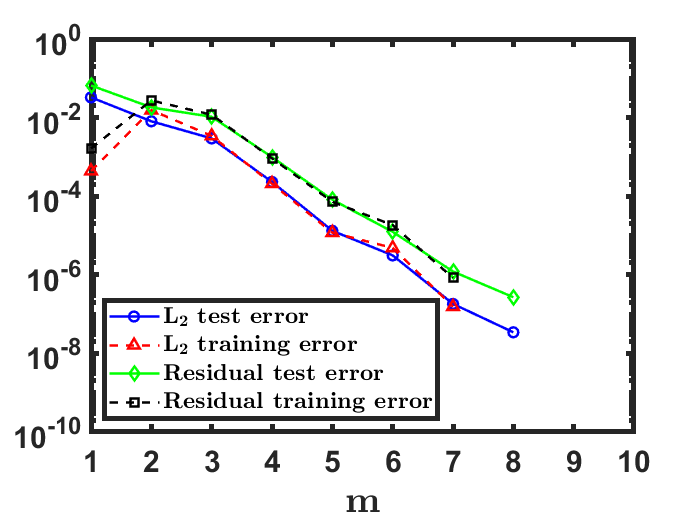}
\caption{}
\end{subfigure}
\begin{subfigure}{0.45\textwidth}
\includegraphics[width=\linewidth]{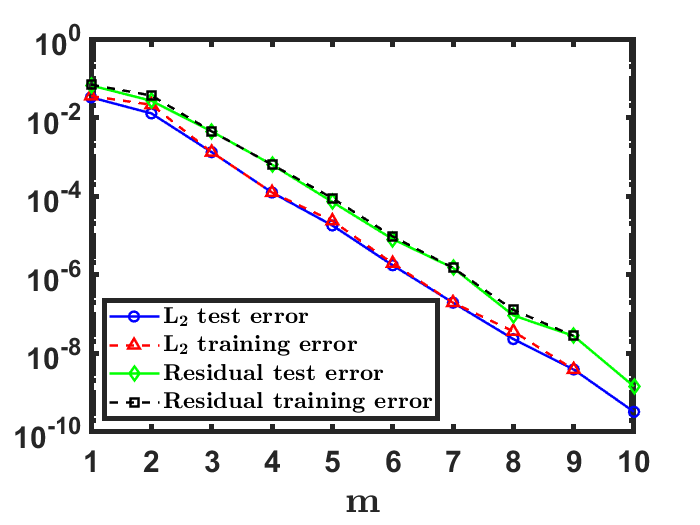}
\caption{}
\end{subfigure}

\caption{2D2v example: Line source. Training and test errors using (a) G-$L_1$, (b) G-Res, (c) PG-$L_1$, (d) PG-Res.}
\label{fig:2d Line Source Training and Test Errors}

\end{figure}

\begin{figure}[ht]
    \centering
    {
    \begin{subfigure}{0.3\textwidth}
    \includegraphics[width=\linewidth]{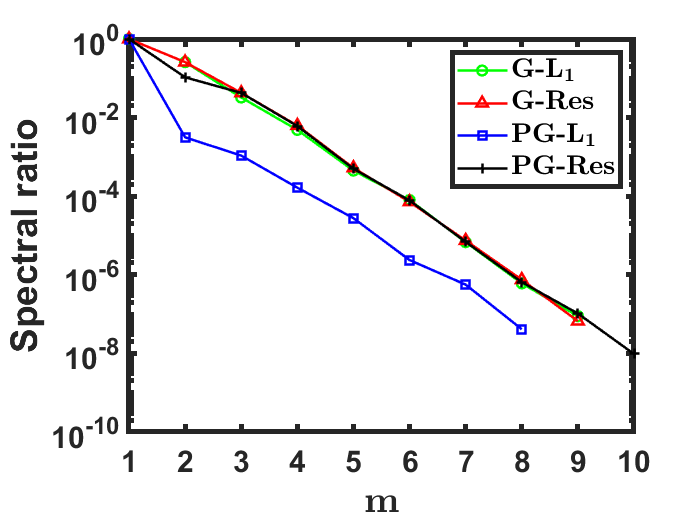}
    \caption{}
    \end{subfigure}
    \begin{subfigure}{0.3\textwidth}
    \includegraphics[width=\linewidth]{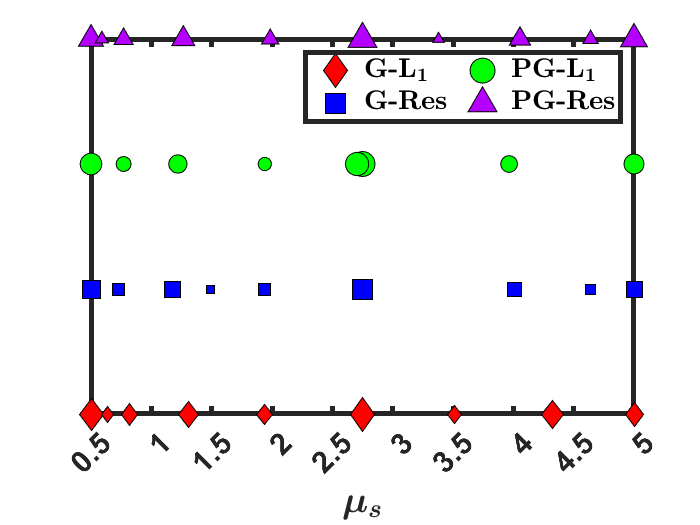}
        \caption{}
    \end{subfigure}
        \begin{subfigure}{0.3\textwidth}
    \includegraphics[width=\linewidth]{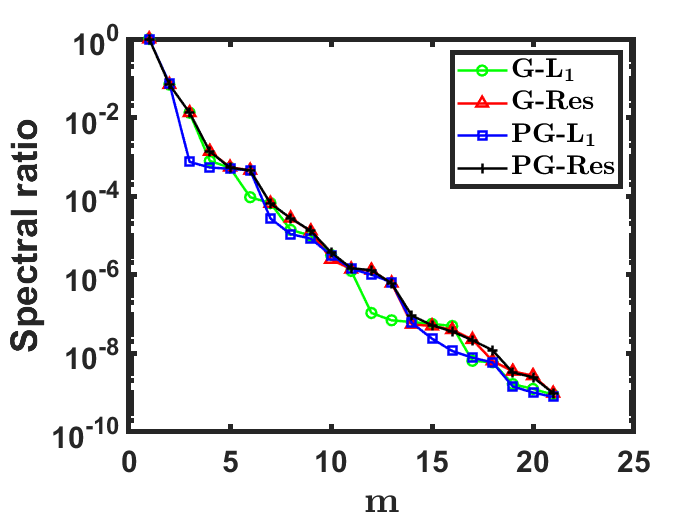}
        \caption{}
    \end{subfigure}
    }
    \caption{2D2v example: (a) Line source,   histories of  the spectral ratio. (b) Line source, parameter points picked in greedy search. (c) Pin-cell,  histories of  the spectral ratio.}
    \label{fig:2d Line Source Spectral Ratio Points Picked in Greedy Search-Pin Cell Spectral Ratio}
\end{figure}

Overall, all four ROMs perform well, with a lot of similar properties as seen in previous examples. We will not repeat these, but highlight a few observations.

\begin{itemize}
    \item 
    There is a noticeable dip in the training errors of PG-$L_1$ at the first iteration. This is related to the fact that  the first two parameter points picked in the offline stage are close to each other.
    This also explains the more rapid decay of the spectral ratio of PG-$L_1$ at the second iteration than the other methods.

    \item 
    Relatively more parameter points are picked in the left half of the parameter domain, with a noticeable cluster in $[0.5,1]$, indicating that the solution manifold is more sensitive to smaller values of $\parameter_s$.
    
\end{itemize}

This example is also used to demonstrate 
the offline costs to build ROMs of different reduced dimension $\rbdim$ and to showcase the significant saving of ROMs to predict the solutions. Two tests similar to those in the previous section are conducted.

\medskip
\noindent{\bf Test 1.} In this test, we focus on the offline training stage of the RBM, and compare in Figure \ref{fig:2d Line Source FOM solve time vs ROM offline time} the computational time (i.e. wall-clock time) to construct each ROM of dimension $\rbdim$ with that to run  $N_{run}=\rbdim$ FOM solves at the selected parameter values during the ROM building process, with $\rbdim$ ranging from $1$ to $15$.  The solid line in each subfigure is for FOM solves, while the dashed lines are for building ROMs with a few representative $\rbdim=5, 10, 15$.  For each ROM, we find that the crossover between each dashed line and the solid line occurs nearly at $N_{run}=\rbdim$. This confirms what we have learned from the lattice example in Section \ref{sec:lattice} that the cost of the offline training stage is dominated by the FOM solvers, and this offline training stage is extremely efficient.

Related, we also present in Figure \ref{fig:2d Lattice-Line Source ROM offline times}-(b) the history of the computational times to build each ROM of growing dimension; again as in Section \ref{sec:lattice}, we observe near linear growth of such cost in $\rbdim$. This is consistent with the observation that the offline computational cost is dominated by that of FOM solves. 
The costs to build all four ROMs are comparable, with G-$L_1$ the fastest for this specific example. Similar to what is observed in Section \ref{sec:lattice} and predicted by Remark \ref{rem:costComp:offline}, it is relatively faster to build G-$L_1$ than G-Res, and it is faster to build PG-$L_1$ than PG-Res.

{Finally we present in Figure \ref{fig:2d Lattice-Line Source ROM offline times and errors}-(b) the computational time (sec) to build ROMs versus $L_2$ training errors
during the offline training stage, as a different way to view the offline ($L_2$) training errors from Figure \ref{fig:2d Line Source Training and Test Errors} and the computational time from  Figure \ref{fig:2d Lattice-Line Source ROM offline times}-(b). To achieve the same resolution in the $L_2$ training errors, it is observed that PG-Res (resp. G-Res) is the most (resp. least) efficient in training  throughout the range of the reduced dimension examined here.}

\begin{figure}[ht]
    \centering
    {
    \begin{subfigure}{0.24\textwidth}
    \includegraphics[width=\linewidth]{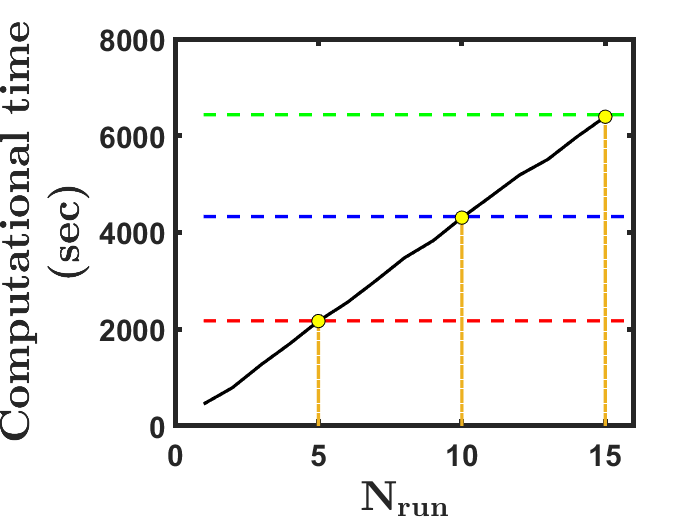}
    \caption{G-$L_1$}
    \end{subfigure}
    \begin{subfigure}{0.24\textwidth}
    \includegraphics[width=\linewidth]{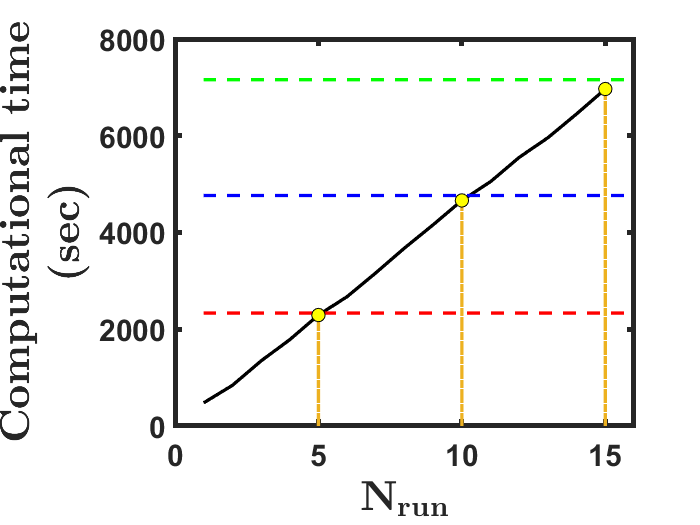}
    \caption{G-Res}
    \end{subfigure}
    \begin{subfigure}{0.24\textwidth}
    \includegraphics[width=\linewidth]{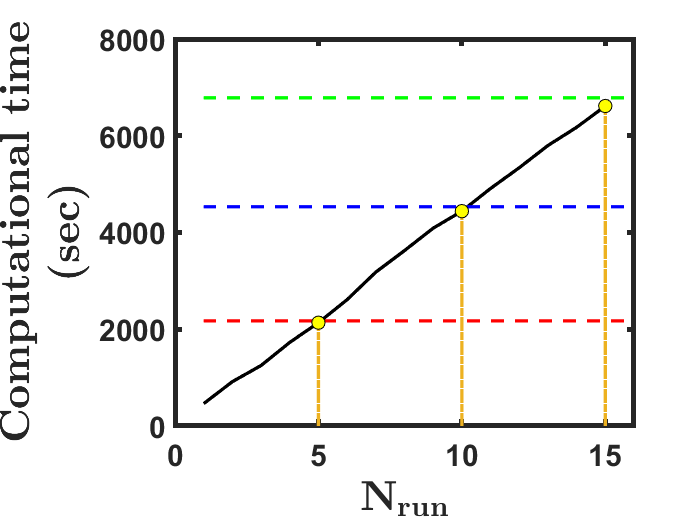}
    \caption{PG-$L_1$}
    \end{subfigure}
    \begin{subfigure}{0.24\textwidth}
    \includegraphics[width=\linewidth]{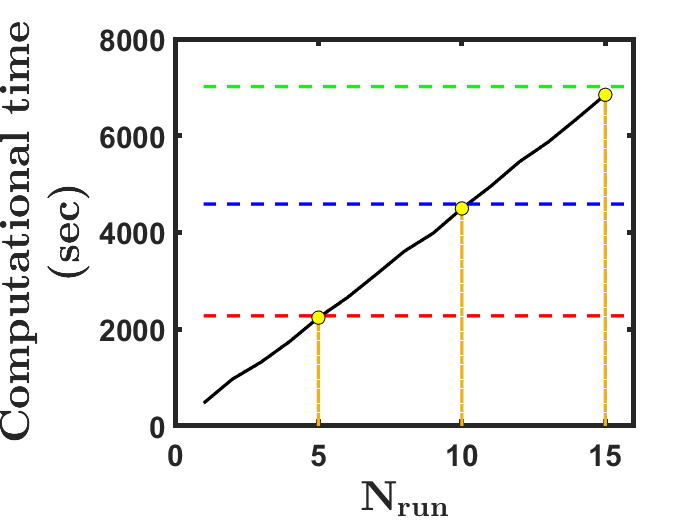}
    \caption{PG-Res}
    \end{subfigure}
    }
    \caption{2D2v example: Line source. $\mN = 4.608 \times10^6$. Comparison between the computational time (sec)  to build each ROM($\rbTrial^{\rbdim};\cdot$) and the total computational time (sec)  of $N_{run}=\rbdim$ FOM solves at the selected parameters during the same process.   Solid line: FOM. Dashed line: ROM  of dimension $\rbdim=5, 10, 15$ (from bottom to top). }
    \label{fig:2d Line Source FOM solve time vs ROM offline time}
\end{figure}

\medskip
\noindent{\bf Test 2.} We next compare the computational costs of the proposed ROMs (with the reduced dimension $\rbdim=8$) and the FOM during the online prediction stage. 
Table \ref{tab:2d Line Source FOM solve time vs ROM online time} reports the averaged wall-clock times for each ROM and the wall-clock time of the FOM to solve the RTE at $\parameter_s = 1, 5$ when $\mN = 30*6*4*N_x*N_y$ with $N_x = N_y = 20*2^k$, $k = 0, 1, 2$.
Note that $30*6$ corresponds to $(30,6)$-CL quadrature in angle used in this test. 
At each $\parameter$, the {presented wall-clock time of a ROM is the average over} 10 runs of the method.

As expected, the FOM solve time increases as the spatial mesh is refined. In particular, on the finest mesh with $\mN\sim 4.6\times 10^6$, it takes about 7-8 minutes to run one FOM solve. On the other hand, the ROM solve times are around $10^{-4}, 10^{-5}$ seconds, with a speedup rate of magnitude $10^6$ for all four ROMs. The computational times of each ROM stay at the same magnitude for increasing $\mN$. Overall,  each of our four ROMs will result in significant cost savings over the FOM to simulate the parametric RTE.

\begin{table}[h] 
\caption{2D2v example: Line source. Computational time (sec) to solve FOM and ROM with G-$L_1$, G-Res, PG-$L_1$, PG-Res at $\parameter_s$ with different $\mN$ when $\rbdim = 8$.}
\vspace{0.1in} 
\centering 
\begin{tabular}{|c|c|c|c|c|c|c|}\hline 
$\mu_s$ & $\mathcal{N}$ & FOM & G-$L_1$ & G-Res & PG-$L_1$ & PG-Res \\ \hline 
\multirow{3}{*}{1} & 2.88E+05 & 1.40E+01 & 5.66E-05 & 5.41E-05 & 1.52E-04 & 1.51E-04 \\ 
& 1.15E+06 & 6.75E+01 & 5.29E-05 & 5.67E-05 & 1.77E-04 & 1.70E-04 \\ 
& 4.61E+06 & 4.16E+02 & 5.57E-05 & 5.42E-05 & 1.67E-04 & 1.45E-04 \\ 
\hline 
\multirow{3}{*}{5} & 2.88E+05 & 1.14E+01 & 5.34E-05 & 5.07E-05 & 1.67E-04 & 1.52E-04 \\ 
& 1.15E+06 & 7.00E+01 & 5.89E-05 & 5.75E-05 & 1.79E-04 & 1.45E-04 \\ 
& 4.61E+06 & 4.69E+02 & 5.58E-05 & 5.48E-05 & 1.61E-04 & 1.41E-04 \\ 
\hline 
\end{tabular} 
\label{tab:2d Line Source FOM solve time vs ROM online time}
\end{table}

\subsubsection{Pin-cell}
\label{sec:pin-cell}

In this subsection, we consider a pin-cell example on $\xset = [-1,1] \times [-1,1]$ with zero inflow boundary conditions and $\source(\x) = \exp(-100(x^2+y^2))$.
The parameter $\parameter=(\parameter_s, \parameter_a)$ arises from modeling the scattering and absorption cross sections, namely,
\[
\scat(\x; \parameter)
=
\begin{cases}
\parameter_s, & \left| x \right| \leq 0.5 \ \text{and} \ \left| y \right| \leq 0.5 \\
100, & \text{otherwise}
\end{cases}
,
\quad
\absorp(\x; \parameter)
=
\begin{cases}
\parameter_a, & \left| x \right| \leq 0.5 \ \text{and} \ \left| y \right| \leq 0.5 \\
0, & \text{otherwise}
\end{cases}
,
\]
and $(\parameter_s,\parameter_a) \in \mP = [0.05,0.5] \times [0.05,0.5]$.
The training set $\mP_{\textrm{train}}$ consists of $19 \times 19$ equally spaced points in $\mP$ and the test set $\mP_{\textrm{test}}$ consists of $10 \times 10$ randomly chosen points uniformly sampled from $\mP$.
A uniform spatial mesh with $N_x = N_y = 80$ is used with $(30,6)$-CL quadrature for the angular discretization.
This gives $\mN = 4.608*10^6$, and we take $\text{tol}_{SRatio}=10^{-9}$ to terminate the offline stage of the RBM.

We implement all four ROMs, 
with their $L_2$ and residual type training and test errors presented in Figure \ref{fig:2d Pin Cell Training and Test Errors}, and their spectral ratios $r^{(\rbdim)}$ in Figure \ref{fig:2d Line Source Spectral Ratio Points Picked in Greedy Search-Pin Cell Spectral Ratio}-(c), all as a function of the reduced dimension $\rbdim$.
We further present the first 10 parameter points that are picked in the offline stage of each ROM in Figure \ref{fig:2d Pin Cell Points Picked in Greedy Search}. Overall,  all four ROMs perform well for this example. Many of the similar observations and comments can be made as the examples in previous sections, e.g. regarding the overall decreasing trends of the training and testing errors as $\rbdim$ grows, the training and test errors of the same type being at the same magnitude for each ROM, 
the number of iterations for the spectral ratio to reach the specified tolerance being comparable among the four ROMs,
the training and test errors of the residual type by PG-Res decreasing monotonically, and the greedy selection favoring ``extreme" parameter values during the early iterations.

\begin{figure}[h]
\centering{
\begin{subfigure}{0.45\textwidth}
\includegraphics[width=\linewidth]{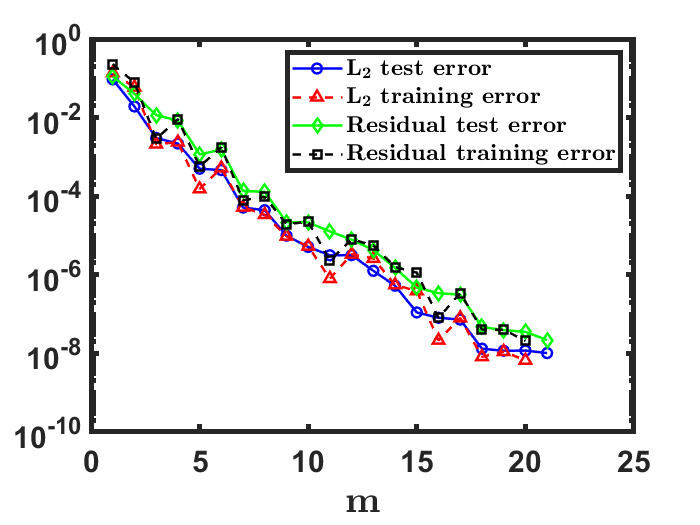}
\caption{}
\end{subfigure}
\begin{subfigure}{0.45\textwidth}
\includegraphics[width=\linewidth]{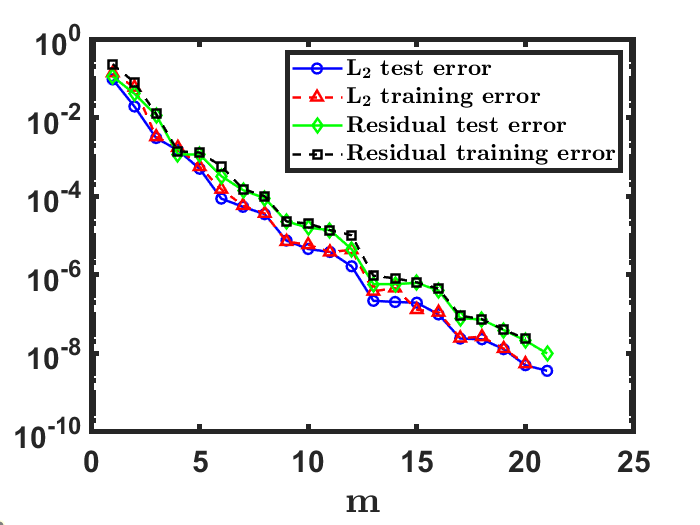}
\caption{}
\end{subfigure}
}

\begin{subfigure}{0.45\textwidth}
\includegraphics[width=\linewidth]{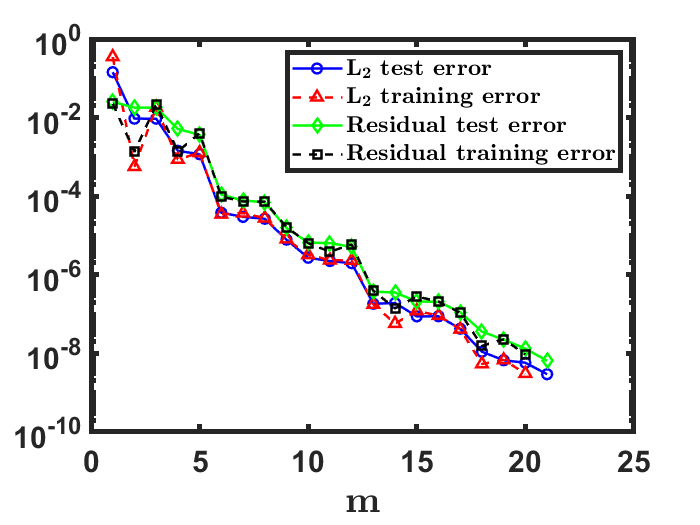}
\caption{}
\end{subfigure}
\begin{subfigure}{0.45\textwidth}
\includegraphics[width=\linewidth]{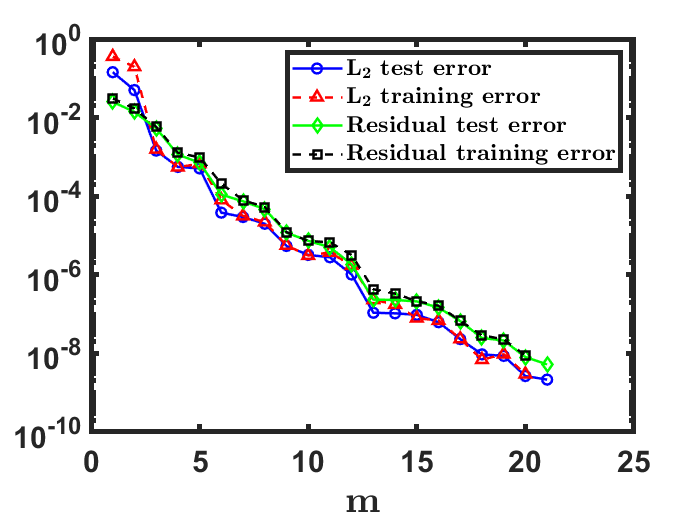}
\caption{}
\end{subfigure}

\caption{2D2v example: Pin-cell. Training and test errors using (a) G-$L_1$, (b) G-Res, (c) PG-$L_1$, (d) PG-Res.}
\label{fig:2d Pin Cell Training and Test Errors}

\end{figure}

\begin{figure}[h!]
\centering{
\includegraphics[width=0.24\textwidth, height=1.3in]
{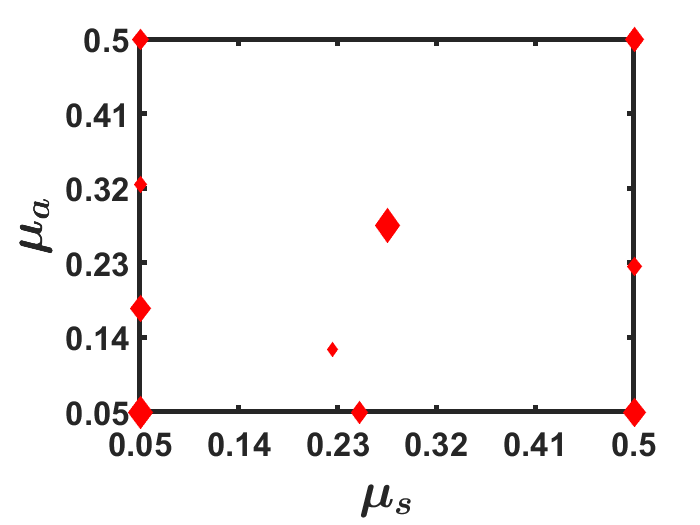}
\includegraphics[width=0.24\textwidth, height=1.3in]
{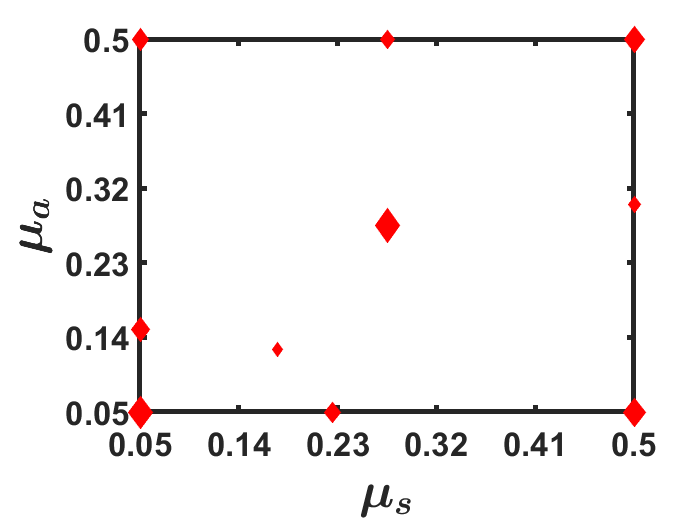}
\includegraphics[width=0.24\textwidth, height=1.3in]
{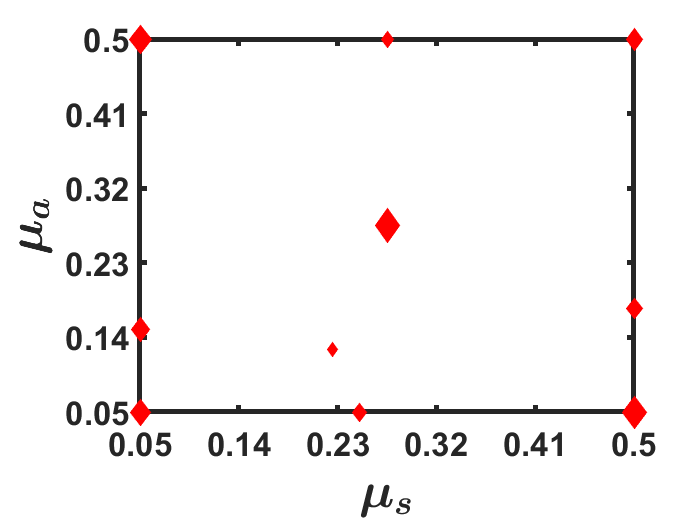}
\includegraphics[width=0.24\textwidth, height=1.3in]
{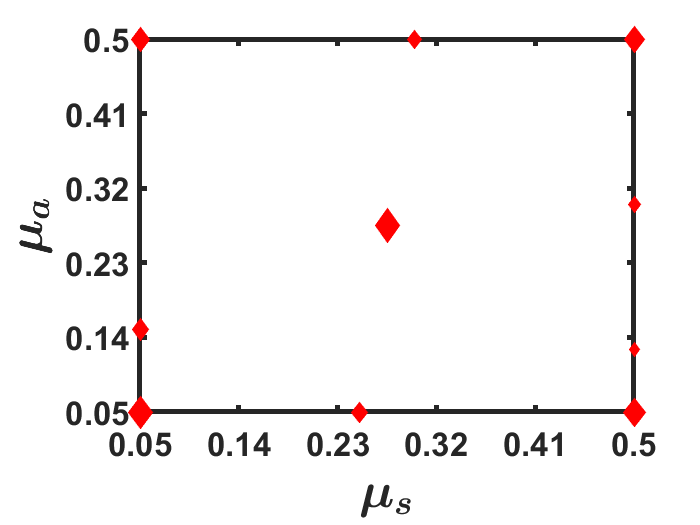}
}
\caption{2D2v example: Pin-cell. First 10 parameter points picked in greedy search using G-$L_1$, G-Res, PG-$L_1$, PG-Res, respectively.}
\label{fig:2d Pin Cell Points Picked in Greedy Search}

\end{figure}

For this example, the average number of iterations for SI-DSA to converge to the specified error tolerance $tol_{SI} = 10^{-12}$ is about 41, as reported in  Table \ref{tab:average iterations SI-DSA:rho0}, when the initial guess is taken as $\macrovec^{(0)}={\bf 0}$. Intuitively, the reduced density by our ROMs can provide a better initial guess and potentially speed up the convergence, and this idea was investigated in  \cite{peng2024reduced} based  on POD-based ROM, and will be demonstrated here.  Particularly, the reduced densities computed by our G-$L_1$ with different reduced dimensions are used as the initial guess. The average number of iterations, computed over $\mP_{\textrm{test}}$,  for SI-DSA to converge with such ROM initial guesses are  reported in Table \ref{tab:average iterations SI-DSA-ROM}. The results confirm that if one is interested in using FOMs to compute high-fidelity solutions for RTE, the reduced densities by our ROMs can be better initial guesses than zero initial guess and hence reduce the number of iterations required by the iterative SI-DSA solvers. The reduction is more prominent as the reduced dimension grows, due to that $\|\angflux_{RB}^{\rbdim}(\parameter)-\angflux_{h}(\parameter)\|_h\rightarrow 0$ as $\rbdim$ grows.
In Table \ref{tab:average iterations SI-DSA-ROM}, we also include the results when the initial densities in SI-DSA are computed by G-$L_1$ built/trained on a coarser $40\times 40$ spatial mesh (while the angular mesh and training set stay the same). This is motivated by the quest to possibly further reduce the cost to prepare/train ROM initial guesses, and turns out to be limited in its efficiency. Here is a simple explanation:  
$\|\angflux_{RB, 40\times 40}^{\rbdim}(\parameter)-\angflux_{h,80\times 80}(\parameter)\|_h\rightarrow 
\|\angflux_{h, 40\times 40}(\parameter)-\angflux_{h, 80\times 80}(\parameter)\|_h$  as $\rbdim$ grows, while for the pin-cell example we have $\|\angflux_{h, 40\times 40}(\parameter)-\angflux_{h, 80\times 80}(\parameter)\|_h\sim 10^{-2}$. The subscript $40\times 40$ or $80\times 80$ stands for the spatial mesh size to compute the FOM and to train the reduced order solvers.

\begin{table}[h!] 
\centering 
\caption{2D2v example: Pin-cell. Average number of iterations required for SI-DSA to converge with $tol_{SI} = 10^{-12}$ and $N_x = N_y = 80$ with (30,6)-CL quadrature. SI-DSA-0: SI-DSA with $\macrovec^{(0)} = \mathbf{0}$. SI-DSA-ROM($n_x, n_y$): SI-DSA with $\macrovec^{(0)}$ taken as the reduced density by G-$L_1$ trained  on a uniform $n_x \times n_y$ spatial mesh  with (30,6)-CL quadrature.} 
\begin{tabular}{|c|c|c|c|c|}\hline 
$\rbdim$ & $r^{(\rbdim)}$ & SI-DSA-0 & SI-DSA-ROM(80,80) & SI-DSA-ROM(40,40) \\
\hline
5 & $\sim 10^{-4}$ & \multirow{3}{*}{40.99} & 31.13 & 37.24 \\
\cline{1-2} \cline{4-5}
10 & $\sim 10^{-6}$ & & 22.48 & 37.25 \\
\cline{1-2} \cline{4-5}
15 & $\sim 10^{-8}$ & & 17.28 & 37.25 \\
\hline
\end{tabular}
\label{tab:average iterations SI-DSA-ROM}
\end{table}

\section{Conclusion}
\label{sec:conclusion}

In this work, we follow the reduced basis method framework, and systematically design and numerically test four families of projection-based ROMs to efficiently solve the parametric RTE.  They are shown to be efficient and reliable as accurate surrogate solvers for this intrinsically high dimensional transport model at many instances of parameter values. 
Though the RTE is not symmetric, ROMs based on Galerkin and Petrov-Galerkin projections work well overall. In addition, the simple $L_1$ error/importance indicator is shown to be effective {in guiding} the greedy parameter selection when combined with Galerkin projection. When it is combined with the Petrov-Galerkin projection defined via residual minimization, the initial selection of the parameter value can have a more lingering effect on the resolution of the ROMs with relatively smaller dimension. This can be improved by using an enhanced $L_1$ error indicator instead at a price of additional yet affordable offline training cost.  The residual of a function in the full order approximation space, when being used to define the reduced order solver in Petrov-Galerkin projection, or as an error indicator to guide  parameter selection, is always measured as a functional. With this,  our constructed ROMs are independent of a specific basis used to represent the approximation space.

By utilizing the affine assumption of the parameter dependence in the model, implementation strategies are carefully designed to best harvest the potential efficiency of the reduced algorithms, and this saving holds for both the offline training stage to construct the reduced order solvers, and the online prediction stage to compute the solutions by these surrogate solvers. 
Our reduced order solvers can predict the solutions for any parameter value at a significantly reduced cost. In some 2D2v examples we tested, a $10^4-10^6$  speedup factor is observed, and this will be even higher for larger full dimension $\mN$. Some algorithmic highlights for implementation include Alg. \ref{alg:pLS:offline}-Alg.\ref{alg:pLS:online} and Alg. \ref{alg:ResC:1} when residual minimization over and residual evaluation at many instances of parameter are involved. Unlike some previous developments, our proposed strategies for efficient implementation also take into account the conditioning of the reduced system matrices as well as the numerical robustness without the residual stagnation especially when the resolution/fidelity of the ROMs is high. It is worth noting that the implementation strategies proposed in this work are broadly applicable to ROMs for other parametric PDEs.  

The computational complexity to construct each ROM is estimated. Though it is nontrival to thoroughly verify the sharpness of these estimates, our numerical study of the computational costs with the 2D2v lattice problem and line source problem indicate that the cost during the offline training is dominated by computing the FOM solutions at greedily selected parameters. Recall that the number of degrees of freedom of the FOM for the high dimensional model like RTE generally is quite high (e.g. $10^5-10^6$ in our 2D2v examples in some moderate discretization setting). Unlike in POD-type ROMs where a large collection of FOM solutions are needed for the offline training stage, the number of FOM inquiries is minimal in the RBM setting, and this particularly supports that RBM-based ROMs are ideal for building efficient surrogate solvers for high dimensional parametric transport models like the RTE. 

For all the proposed ROMs, the spectral ratio stopping criterion is applied to terminate the offline stage. The spectral ratio is a quantity that measures how much extra information is added if the reduced basis space is further enriched. This simple and intuitive stopping criterion however is not certifiable, namely, it does not inform us how accurate the reduced solutions are. A scaled residual-based error indicator on the other hand can certify the $L_2$ errors in the reduced solutions when the absorption cross section is positively bounded below uniformly; therefore, G-Res and PG-Res are certified ROMs in such physical settings.  One important future task is to investigate and design certified RBM-based ROMs for the parametric steady-state (and even time-dependent) RTE with more general absorption cross sections.
Another interesting direction is to further improve the offline efficiency, by using the surrogate solvers in \cite{peng2022reduced} to compute the FOM solution during each greedy iteration in Alg. \ref{alg:RBM} and in Alg. \ref{alg:entireAlg} at least in the regimes when the solvers in \cite{peng2022reduced} are efficient. 
{In the presence of boundary layers, additional algorithmic developments are needed at both the FOM \cite{guermond2010asymptotic} and ROM levels, in order to design accurate and efficient surrogate solvers.}
The model order reduction techniques developed for the RTE with isotropic scattering and one energy group will contribute to efficient simulations of the parametric RTE in more realistic settings, e.g., with anisotropic scattering and multi-group energy. 

\section{Acknowledgments}
{In this work, Matsuda was supported  in part by National Science Foundation  grant DMS-1913072,  Chen was supported in part by National Science Foundation grant DMS-2208277 and by Air Force Office of Scientific Research grant FA9550-25-1-0181, Cheng was supported in part by DOE grant DE-SC0023164, Air Force Office of Scientific Research grant FA9550-25-1-0154 and Virginia Tech, and Li was supported in part by  National Science Foundation  grant DMS-1913072 and Air Force Office of Scientific Research grant FA9550-26-1-0003. 
The authors would also like to thank Dr. Zhichao Peng at  Hong Kong University of Science and Technology for generously sharing  his computer codes.}
\begin{appendices}%

\counterwithin{equation}{section}

\section{FOM by SI-DSA
and computational complexity}
\label{app:impt:cost:FOM}

The FOM is solved iteratively by the source iteration scheme accelerated by diffusion synthetic acceleration (SI-DSA) \cite{adams2002fast,peng2022reduced}. 
This is a well-established iterative method for the 
RTE model \eqref{eq:kinetic transport equation} when it is   discretized in space by a pure upwind strategy along with the $S_N$ angular treatment. The algorithm starts from an initial guess of $\macrovec^{(0)}$, then iterates  by a source  iteration (SI) combined with a transport sweep, and accelerated by a (consistent) diffusion synthetic acceleration (DSA).   By a transport sweep,  we refer to that the angular flux in the next iterate, namely, $\{\angfluxvec_j^{(k)}\}_{j=1}^{N_{\vel}}$,  is solved algebraically from
\begin{equation}
  \label{eq:trans-sweep}
  (\Upwind_j + \hat{\mathbf{\Sigma}}_a+\hat{\mathbf{\Sigma}}_s) \angfluxvec_j^{(k)} = \hat{\mathbf{\Sigma}}_s \macrovec^{(k-1)} + \dataVec_j,
  \end{equation}
  which is a block lower-triangular system once  the mesh elements (hence the degrees of freedom) are suitably ordered. The DSA correction/pre-conditioning is especially important for accelerating the convergence in the scattering dominated regime. In this work, we follow Algorithm 1 in  \cite{peng2022reduced} with the correction step given in Appendix A.1 therein.

  Let us examine the computational complexity of SI-DSA.
  If SI is performed without acceleration, at the $k$th iteration, one solves \eqref{eq:trans-sweep} for $\{\angfluxvec_j^{(k)}\}_{j=1}^{N_{\vel}}$ and then computes $\macrovec^{(k)} = \sum_{j=1}^{N_{\vel}} \vweight_j \angfluxvec_j^{(k)}$.
  SI terminates when $\| \macrovec^{(k)} - \macrovec^{(k-1)} \|_{\ell_{\infty}} < \vareps_{SI}$ where $\vareps_{SI}$ is a  specified tolerance.
  With acceleration, the algorithm includes a correction step after the transport sweep in which one solves an equation for $\delta \macrovec^{(k)}$, a correction to $\macrovec^{(k)}$, and then updates $\macrovec^{(k)}$ as $\macrovec^{(k)} = \sum_{j=1}^{N_{\vel}} \vweight_j \angfluxvec_j^{(k)} + \delta \macrovec^{(k)}$.
  This correction equation is solved using algebraic multigrid methods at a cost of $O(N_{\x})$, which is much lower than $O(\mN)$, the cost of one iteration  of  \eqref{eq:trans-sweep}.
  Therefore, assuming that SI-DSA converges in $N_{iter}$ iterations, the leading order cost of the FOM by SI-DSA is:   
  $\boxed{O\big(\mN N_{iter})}$.

\section{{Details of several proofs}}
\subsection{Proof of Lemma \ref{lem:norm:alg}}
\label{ap:lem1}

Consider any $g_h, \hat{g}_h\in U_h$,  let   $\mathbf{g}$, $\hat{\mathbf{g}}$, $\mathbf{r}$ be the coordinate vectors of $g_h, \hat{g}_h, r_h(g_h)$, respectively. 
 By direct calculation, we have 
\begin{align}
    \|g_h\|^2_h=\sum_{j=1}^{N_{\vel}}\vweight_j \mathbf{g}_j^T\massMat \mathbf{g}_j=
\mathbf{g}^T\textrm{diag}(\vweight_1\massMat,\vweight_2\massMat,\dots,\vweight_{N_{\vel}}\massMat)\mathbf{g},
\end{align}
and this gives \eqref{lem:norm:alg.a}. Similarly, 
\begin{subequations}
\begin{align}
\|r_h(g_h)\|_h^2&=\mathbf{r}^T \textrm{diag}(\vweight_1\massMat,\vweight_2\massMat,\dots,\vweight_{N_{\vel}}\massMat)\mathbf{r},\label{lem:norm:alg.c}\\
(r_h(g_h), \hat{g}_h)_h&=\hat{\mathbf{g}}^T \textrm{diag}(\vweight_1\massMat,\vweight_2\massMat,\dots,\vweight_{N_{\vel}}\massMat) \mathbf{r}.\label{lem:norm:alg.d}
\end{align}
\end{subequations}
On the other hand, with $\bI_{N_\x}$ as the $N_{\x}\times N_{\x}$ identity matrix, one has
\begin{equation}
a_h(g_h, \hat{g}_h)-l_h(\hat{g}_h)=\hat{\mathbf{g}}^T\textrm{diag}(\vweight_1 \bI_{N_\x},\dots,\vweight_{N_{\vel}} \bI_{N_\x})(\bA \mathbf{g}-\dataVec),
\label{lem:norm:alg.e}
\end{equation}
    and \eqref{eq:res:gh} therefore leads to 
\begin{equation}
\mathbf{r}=\textrm{diag}(\massMat^{-1},\dots,\massMat^{-1})(\bA \mathbf{g}-\dataVec).
\label{lem:norm:alg.f}
\end{equation}  
By combining \eqref{lem:norm:alg.c} and \eqref{lem:norm:alg.f}, one reaches \eqref{lem:norm:alg.b}.

\subsection{Proof of Proposition \ref{prop:aPostErr}}
\label{ap:prop1}

    First of all, by performing the now standard $L_2$-type stability analysis for the upwind DG method applied to linear transport equations (also see \cite{sheng2021uniform}) and utilizing $(\langle g_h\rangle_h, g_h-\langle g_h\rangle_h)_h=0,\;\forall g_h\in U_h,$ with $\langle g_h\rangle_h:=\sum_{j=1}^{N_v}\vweight_j g_{h,j}$, one can derive the coercivity of the bilinear form $a_{h,\parameter}$, namely \begin{equation}
        \absorp^\star \|g_h\|_h^2\leq a_{h,\parameter}(g_h, g_h),\quad \; \forall g_h\in U_h.
         \end{equation}
  We then proceed as in \cite{haasdonk2017reduced} (i.e., Propositions 2.20, 2.24), with the shorthand notation $e_h(\parameter)=\angflux_{RB}^m(\cdot;\parameter)-\angflux_h(\cdot;\parameter)\in U_h$ and the definition of the residual in \eqref{eq:res:gh}: 
    \begin{align*}
      \absorp^\star \|e_h(\parameter)\|_h^2&\leq a_{h,\parameter}(e_h(\parameter), e_h(\parameter))\\
        &= 
    a_{h,\parameter}(\angflux_{RB}^{\rbdim}(\cdot;\parameter), e_h(\parameter))-l_{h,\parameter}(e_h(\parameter))\\
    &=(r_{h,\parameter}(\angflux_{RB}^\rbdim(\cdot;\parameter)), e_h(\parameter))_h \leq 
    \|r_{h,\parameter}(\angflux_{RB}^\rbdim(\cdot;\parameter))\|_h \|e_h(\parameter)\|_h.
    \end{align*}
    This leads to \eqref{eq:aPostErr}.

\subsection{Proof of Theorem \ref{thm:why:QRp}}
\label{ap:thm1}

The key to the proof lies in that the range of $\bAr_{\parameter}$ is in the range of $\bQ$, due to  \eqref{eq:A:PG:p} as well as  \eqref{eq:QRp:0}-\eqref{eq:QRp:1} in Step 2 of Alg. \ref{alg:pLS:offline}, namely
\begin{equation}
\textrm{Range}(\bAr_{\parameter})\subset
\textrm{Range}(\bB)=
\textrm{Range}(\bQ),
\end{equation}
where $\bB$ is defined in Step 2 of Alg. \ref{alg:pLS:offline}.
This implies $\bAr_{\parameter}=\bQ\bQ^T \bAr_\parameter$. This also implies $\rbdim=\textrm{Rank}(\bAr_{\parameter})\leq \textrm{Rank}(\bQ)=s$. From here, one can proceed as follows,
\begin{align*}
    \bAr_{\parameter}&=\bQ\bQ^T\left(\sum_{q=1}^{Q_A} \theta_q^A(\parameter) {\bAr}^q\right)  \quad \textrm{(by \eqref{eq:A:PG:p})}\\
    &=\bQ\left(\sum_{q=1}^{Q_A} \theta_q^A(\parameter) \bQ^T{\bAr}^q\right)=\bQ\Big(\underbrace{\sum_{q=1}^{Q_A} \theta_q^A(\parameter) \bY^q}_{\bY_\parameter}\Big)  \quad \textrm{(by Step 3 of Alg. \ref{alg:pLS:offline})}\\ 
        &=\bQ \tilde{\bQ}_\mu \bR_\mu \quad \textrm{(by \eqref{eq:QRp:2}).}
\end{align*}
Set $\widehat{\bQ}_\parameter=\bQ \tilde{\bQ}_\mu$, and one can verify 
$$\widehat{\bQ}_\parameter^T\widehat{\bQ}_\parameter=
(\bQ \tilde{\bQ}_\mu)^T\bQ \tilde{\bQ}_\mu=\tilde{\bQ}_\mu^T(\bQ^T\bQ) \tilde{\bQ}_\mu= \tilde{\bQ}_\mu^T \tilde{\bQ}_\mu=\bI_{\rbdim\times\rbdim}.$$
The full-rankness of $\bY_\parameter$ can be deduced from $\textrm{Null}(\bY_\parameter)\subset \textrm{Null}(\bAr_\parameter)$, with $\bAr_{\parameter}$ being rank-$\rbdim$. Finally, \eqref{eq:QRp:3} follows naturally.

\subsection{Proof of Theorem \ref{thm:ResC:1}}
\label{ap:thm2}

Using the definitions in \eqref{eq:A:PG:p}, \eqref{eq:QRp:0}, \eqref{eq:ResC:1}
and the Kronecker product $\otimes$, the residual can be rewritten into
\begin{equation} 
\|\bA_{\parameter} \orthorbsmat^{\rbdim}\mathbf{c} - \dataVec_\parameter\|_{\mM_h}=\| {\bAr}_{\parameter} \mathbf{c}-{\bbr}_{\parameter}\|_{\ell_2}=
\left\|\bB\begin{bmatrix}\boldsymbol{\theta}^A(\parameter)\otimes \mathbf{c}\\
-\boldsymbol{\theta}^b(\parameter)
\end{bmatrix}\right\|_{\ell_2}.
\end{equation}
Recall the column-pivoted QR factorization and the notation in \eqref{eq:QRp:1}. Note that $\bP$ is unitary, and left-multiplication by $\bQ$ does not change the 2-norm of a vector, one further gets
    \begin{equation*} 
\| {\bAr}_{\parameter} \mathbf{c}-{\bbr}_{\parameter}\|_{\ell_2}
=\left\|\bB\bP \bP^T\begin{bmatrix}\boldsymbol{\theta}^A(\parameter)\otimes \mathbf{c}\\
-\boldsymbol{\theta}^b(\parameter)
\end{bmatrix}\right\|_{\ell_2}
=\left\|\bQ\bR \bP^T\begin{bmatrix}\boldsymbol{\theta}^A(\parameter)\otimes \mathbf{c}\\
-\boldsymbol{\theta}^b(\parameter)
\end{bmatrix}\right\|_{\ell_2}=\left\|\bR \bP^T\begin{bmatrix}\boldsymbol{\theta}^A(\parameter)\otimes \mathbf{c}\\
-\boldsymbol{\theta}^b(\parameter)
\end{bmatrix}\right\|_{\ell_2}.
    \end{equation*}

\section{A generalized $L_1$ error indicator and variants of implementation strategies}
\label{app:variants}

 \subsection{Enhanced $k$-point $L_1$ error indicator}
\label{app:enhancedL1}

It is observed that the simple $L_1$ error indicator does not always find the best parameter value, especially during the early greedy iterations. Below, a generalized $L_1$ error indicator will be described, referred to as the {\it enhanced $k$-point  $L_1$ error indicator}, 
with $k$ as a hyper-parameter of a small natural number. 
Suppose the set $\mP_{\rbdim}$  of the first $\rbdim$ parameter values are greedily selected. 
Let $\mP_{\rbdim}^{(k)}\subset \mP_{\textrm{train}}\setminus\mP_{\rbdim}$ be the set of  $k$ parameter samples where the top $k$ largest values of $\{\Delta_{\rbdim}^{(L)}(\parameter): \parameter\in \mP_{\textrm{train}}\setminus\mP_{\rbdim}\}$ are attained. The next parameter $\parameter_{\rbdim+1}$ is then selected as 
\begin{equation}
\parameter_{\rbdim+1} = \argmax_{\parameter \in \mP_{\rbdim}^{(k)}}\| \angflux_h(\cdot;\parameter) - \angflux_{RB}^\rbdim(\cdot; \parameter) \|_h.
\end{equation}
Note that $k = 1$ corresponds to using the standard $L_1$ error indicator for parameter greedy selection. With the enhanced $k$-point $L_1$ error indicator ($k>1$), $k$ FOM solves are needed during each greedy iteration and the offline computational cost increases respectively. It is demonstrated in Section \ref{sec:lattice} (with $k=2$, and the offline computational cost doubled) that this enhanced $k$-point $L_1$ error indicator with $k>1$ can greatly improve the robustness of the offline training stage of PG-$L_1$ and hence the resolution of the resulting ROM.

\subsection{Variant of Alg. \ref{alg:pLS:offline} - Alg. \ref{alg:pLS:online}} 
\label{app:variants:1}

First of all, to implement ROM($\rbTrial^{\rbdim};\parameter$) via LS-Petrov-Galerkin projection for many parameter values, one natural way to form the concatenated matrix $\bB$ in \eqref{eq:QRp:0} of Alg. \ref{alg:pLS:offline} is 
\begin{equation}
\bB=[\bAr^1, \bAr^2, \dots, \bAr^{Q_A}],\quad \textrm{with}\;\; Q_B=\rbdim Q_A.
\label{eq:QRp:0a}
 \end{equation}
 That is, $\bB$ only gathers the parameter-independent sub-matrices defining $\bAr_{\parameter}$ in \eqref{eq:A:PG:p}, as in \cite{kressner2024randomized}.  Let us denote the variant of Alg. \ref{alg:pLS:offline} as Alg. \ref{alg:pLS:offline}$^\prime$,  where the only difference comes from using $\bB$ in \eqref{eq:QRp:0a}.
 This variant of the offline stage algorithm together with the online algorithm Alg. \ref{alg:pLS:online} provides a good procedure to efficiently and robustly solve the parametric least-squares problem \eqref{eq:rom:PG:noW} at many parameter values. Theorem \ref{thm:why:QRp} also holds for Alg. \ref{alg:pLS:offline}$^\prime$ - Alg. \ref{alg:pLS:online}, with no modification needed.

\begin{itemize}
\item  This variant, Alg. \ref{alg:pLS:offline}$^\prime$ - Alg. \ref{alg:pLS:online}, works well to implement {PG-$L_1$} when no residual errors are computed (e.g., the training residual errors or the test residual errors). Note that these errors are reported in our numerical experiments mainly to evaluate the proposed methods, and they are not needed for either building ROMs or using ROMs for prediction.   
\item This variant, Alg. \ref{alg:pLS:offline}$^\prime$ - Alg. \ref{alg:pLS:online}, combined with the residual computation in \eqref{eq:Res-Alt1}, can also  work well to implement {PG-Res} when the reduced dimension $\rbdim$ is relatively small or moderate, and the respective reduced order solvers have relatively low resolution/fidelity.

\item On the other hand,  for {PG-Res} with relatively larger reduced dimension $\rbdim$ and higher resolution/fidelity, Alg. \ref{alg:ResC:1} is preferred over the formula \eqref{eq:Res-Alt1} for residual computation to avoid the residual stagnation hence to improve the numerical robustness. Alg. \ref{alg:pLS:offline} - Alg. \ref{alg:pLS:online} team up better with Alg. \ref{alg:ResC:1} for the overall efficiency, without the need to compute the column-pivoted QR factorization for two different $\textbf{B}$ in \eqref{eq:QRp:0} and in \eqref{eq:QRp:0a}. 
\end{itemize}

The last two points are closely related to the variants to be discussed next. They are illustrated numerically in Figure \ref{fig:1d Two-Material Problem Training and Test Errors}-(d) and Figure \ref{fig:1d TWo-Material Problem Spectral Ratio and Max Cond(ROM) and PG-Res alternative strategy}-(c) for the two-material problem in Section \ref{sec:two-material}, and in  Figure \ref{fig:1d Spatially Varying Scattering Training and Test Errors}-(d) and Figure \ref{fig:1d Spatially Varying Scattering Spectral Ratio and Max Cond(ROM) and PG-Res alternative strategy}-(c) for the spatially varying scattering example in Section \ref{sec:spatially varying scattering}.

 \subsection{Variants for residual computation in {\bf PG-Res}}
 \label{app:variants:2}

 Suppose  Alg. \ref{alg:pLS:offline}$^\prime$ - Alg. \ref{alg:pLS:online} are applied to implement ROM($\rbTrial^{\rbdim};\parameter$) via LS-Petrov-Galerkin projection.  Below are two variants to evaluate the residual 
 $\|r_{h,\parameter}(\angflux_{RB}^{\rbdim}(\cdot;\parameter))\|_h=\| {\bAr}_{\parameter} \mathbf{c}-{\bbr}_{\parameter}\|_{\ell_2}$ at $\mathbf{c}=\reducedcoeff^{\rbdim}(\parameter)$. This is the residual-based error indicator $\Delta^{(R)}_{\rbdim}(\parameter)$ in PG-Res, and it is also used to compute the training and test residual errors at $\parameter$.

\begin{itemize}
\item [$\diamond$] {\bf Variant 1:}
 One alternative way to compute $\|r_{h,\parameter}(\angflux_{RB}^{\rbdim}(\cdot;\parameter))\|_h=\|\bAr_{\parameter} \reducedcoeff^{\rbdim}(\parameter)-{\bbr}_{\parameter}\|_{\ell_2}$ is by making good use of some intermediate quantities available from Alg. \ref{alg:pLS:online}, as shown next.

\begin{lemma} The following holds
 \begin{align}
\label{eq:Res-Alt1}
     \|\bAr_{\parameter} \reducedcoeff^{\rbdim}(\parameter)-{\bbr}_{\parameter}\|_{\ell_2}
     &=(\|\bbr_{\parameter}\|_{\ell_2}^2-\|\bd_\parameter\|_{\ell_2}^2)^{1/2}.
      \end{align}
\end{lemma}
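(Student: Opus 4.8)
\textbf{Proof proposal for \eqref{eq:Res-Alt1}.}

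The plan is to exploit the least-squares structure of the reduced problem together with the QR factorization computed in Alg.~\ref{alg:pLS:online}. Recall from Theorem~\ref{thm:why:QRp} (which applies verbatim to Alg.~\ref{alg:pLS:offline}$^\prime$ -- Alg.~\ref{alg:pLS:online}) that the economy QR factorization of the reduced system matrix is $\bAr_{\parameter}=\widehat{\bQ}_\parameter\bR_\parameter$ with $\widehat{\bQ}_\parameter=\bQ\tilde{\bQ}_\parameter\in\mathR^{\mN\times\rbdim}$ having orthonormal columns and $\bR_\parameter$ upper triangular and invertible, and that $\reducedcoeff^{\rbdim}(\parameter)=\bR_\parameter^{-1}\bd_\parameter$ with $\bd_\parameter=\widehat{\bQ}_\parameter^T\bbr_{\parameter}$ (this is exactly what Steps 2--4 of Alg.~\ref{alg:pLS:online} compute, noting $\sum_q\theta_q^b(\parameter)\tilde{\bb}^q=\bQ^T\bbr_\parameter$, hence $\bd_\parameter=\tilde{\bQ}_\parameter^T\bQ^T\bbr_\parameter=\widehat{\bQ}_\parameter^T\bbr_\parameter$).

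First I would substitute $\bAr_\parameter=\widehat{\bQ}_\parameter\bR_\parameter$ and $\reducedcoeff^{\rbdim}(\parameter)=\bR_\parameter^{-1}\bd_\parameter$ into the residual, obtaining $\bAr_{\parameter}\reducedcoeff^{\rbdim}(\parameter)-\bbr_\parameter=\widehat{\bQ}_\parameter\bd_\parameter-\bbr_\parameter$. Then I would decompose $\bbr_\parameter$ into its component in $\mathrm{Range}(\widehat{\bQ}_\parameter)$ and the orthogonal complement: since $\widehat{\bQ}_\parameter^T\widehat{\bQ}_\parameter=\bI_{\rbdim\times\rbdim}$, the vector $\widehat{\bQ}_\parameter\bd_\parameter=\widehat{\bQ}_\parameter\widehat{\bQ}_\parameter^T\bbr_\parameter$ is precisely the orthogonal projection of $\bbr_\parameter$ onto $\mathrm{Range}(\widehat{\bQ}_\parameter)$, so $\widehat{\bQ}_\parameter\bd_\parameter-\bbr_\parameter=-(\bI-\widehat{\bQ}_\parameter\widehat{\bQ}_\parameter^T)\bbr_\parameter$ lies in the orthogonal complement. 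The Pythagorean identity then gives $\|\bbr_\parameter\|_{\ell_2}^2=\|\widehat{\bQ}_\parameter\bd_\parameter\|_{\ell_2}^2+\|(\bI-\widehat{\bQ}_\parameter\widehat{\bQ}_\parameter^T)\bbr_\parameter\|_{\ell_2}^2$, and since $\|\widehat{\bQ}_\parameter\bd_\parameter\|_{\ell_2}=\|\bd_\parameter\|_{\ell_2}$ (orthonormal columns), rearranging yields $\|\bAr_\parameter\reducedcoeff^{\rbdim}(\parameter)-\bbr_\parameter\|_{\ell_2}^2=\|\bbr_\parameter\|_{\ell_2}^2-\|\bd_\parameter\|_{\ell_2}^2$, which is \eqref{eq:Res-Alt1} after taking square roots (the right-hand side is nonnegative since the left-hand side is). Finally I would note that $\|r_{h,\parameter}(\angflux_{RB}^{\rbdim}(\cdot;\parameter))\|_h=\|\bAr_\parameter\reducedcoeff^{\rbdim}(\parameter)-\bbr_\parameter\|_{\ell_2}$ follows from Lemma~\ref{lem:norm:alg} and Lemma~\ref{lem:reform:ROMPG}.

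There is no real obstacle here; this is a routine application of the Pythagorean theorem for orthogonal projections. The one point worth stating carefully in the writeup is why the two displayed square norms can legitimately be subtracted --- i.e.\ that $\|\bd_\parameter\|_{\ell_2}\le\|\bbr_\parameter\|_{\ell_2}$ --- which is immediate from the projection interpretation. The practical caveat, already flagged in Appendix~\ref{app:variants:1}, is that this formula computes a small quantity as the difference of two potentially large nearly-equal numbers, so it is susceptible to loss of significance when the residual is tiny; this is a numerical remark, not part of the proof.
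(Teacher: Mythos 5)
Your proposal is correct and follows essentially the same route as the paper's proof: both reduce the identity to the Pythagorean relation coming from the least-squares optimality (residual orthogonal to $\mathrm{Range}(\bAr_\parameter)$), substitute the QR factorization $\bAr_\parameter=\widehat{\bQ}_\parameter\bR_\parameter$ with $\bAr_\parameter\reducedcoeff^{\rbdim}(\parameter)=\widehat{\bQ}_\parameter\bd_\parameter$, and use that $\widehat{\bQ}_\parameter$ preserves the $\ell_2$ norm. Your explicit projector formulation $\widehat{\bQ}_\parameter\bd_\parameter=\widehat{\bQ}_\parameter\widehat{\bQ}_\parameter^T\bbr_\parameter$ is just a restatement of the optimality condition the paper invokes, so there is no substantive difference.
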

\begin{proof} The proof is mostly by direct calculation.
    \begin{align*}
\|\bAr_{\parameter} \reducedcoeff^{\rbdim}(\parameter)-{\bbr}_{\parameter}\|_{\ell_2}^2
   &=\|{\bbr}_{\parameter}\|_{\ell_2}^2-\|\bAr_{\parameter} \reducedcoeff^{\rbdim}(\parameter)\|_{\ell_2}^2\\
     &=\|{\bbr}_{\parameter}\|_{\ell_2}^2-\|\widehat{\bQ}_\parameter\bR_\parameter \reducedcoeff^{\rbdim}(\parameter)\|_{\ell_2}^2\\
     &=\|{\bbr}_{\parameter}\|_{\ell_2}^2-\|\widehat{\bQ}_\parameter\bd_\parameter\|_{\ell_2}^2=\|{\bbr}_{\parameter}\|_{\ell_2}^2-\|\bd_\parameter\|_{\ell_2}^2.
\end{align*}
The first equality is due to the optimality condition of the least-squares solution in \eqref{eq:rom:PG:noW}, namely, 
${\bbr}_{\parameter}-\bAr_{\parameter} \reducedcoeff^{\rbdim}(\parameter)\perp\textrm{Range}(\bAr_{\parameter})$.  The last is due to that $\widehat{\bQ}_\parameter$ does not change the 2-norm of a vector. 
\end{proof}
In \eqref{eq:Res-Alt1}, $\bd_\parameter$ is available from Step 3 of Alg. \ref{alg:pLS:online}, while $\|\bbr_{\parameter}\|_{\ell_2}$ can be computed with an offline-online strategy by utilizing the affine structure of $\bbr_{\parameter}$, namely 
\begin{align}
\|{\bbr}_{\parameter}\|_{\ell_2}^2 = \|\sum_{q=1}^{Q_b} \theta_q^b(\parameter){\bbr}^q\|_{\ell_2}^2=\sum_{q=1}^{Q_b} \sum_{p=1}^{Q_b} \theta_q^b(\parameter)\theta_p^b(\parameter)\underbrace{({\bbr}^q)^T{\bbr}^p}_{\textrm{pre-computed}}.
\end{align}
This alternative strategy is quite efficient, and works well numerically in most cases. However,  it can suffer, in some numerical examples, from the loss of significance as the reduced dimension $\rbdim$ grows. This degradation in robustness can be observed from  Figure \ref{fig:1d TWo-Material Problem Spectral Ratio and Max Cond(ROM) and PG-Res alternative strategy}-(c) and Figure  \ref{fig:1d Spatially Varying Scattering Spectral Ratio and Max Cond(ROM) and PG-Res alternative strategy}-(c), in the form of the  stagnation of the residual errors when $\rbdim\geq 15$. It does not help if one uses $\Delta^{(R)}_{\rbdim}(\parameter)= \big((\|\bbr_{\parameter}\|_{\ell_2}-\|\bd_\parameter\|_{\ell_2})(\|\bbr_{\parameter}\|_{\ell_2}+\|\bd_\parameter\|_{\ell_2})\big)^{1/2}$. It is understandable that the observed stagnation is mild and is unlikely to be a concern in practice. Nevertheless, we find Alg. \ref{alg:pLS:offline} - Alg. \ref{alg:pLS:online} with Alg. \ref{alg:ResC:1} are more robust regardless of the reduced dimension $\rbdim$ being large or small (before the termination of the offline training stage).

\item[$\diamond$]{\bf Variant 2:} Another alternative way to compute $\| {\bAr}_{\parameter} \mathbf{c}-{\bbr}_{\parameter}\|_{\ell_2}$ comes from a reformulation of the reduced residual, derived by following a similar argument as for Theorem \ref{thm:ResC:1}. As a reminder, we are considering Alg. \ref{alg:pLS:offline}$^\prime$-Alg.  \ref{alg:pLS:online}, and $\textbf{B}$ is defined in \eqref{eq:QRp:0a} in Alg. \ref{alg:pLS:offline}$^\prime$ with its column-pivoted QR factorization in \eqref{eq:QRp:1}. We also introduce $\hat{\bB}=[\bbr^1,\bbr^2,\dots,\bbr^{Q_b}]$:
\begin{align} 
\| {\bAr}_{\parameter} \mathbf{c}-{\bbr}_{\parameter}\|_{\ell_2}^2
&=\left\|\bB \left(\boldsymbol{\theta}^A(\parameter)\otimes \mathbf{c}\right)-\hat{\bB}\boldsymbol{\theta}^b(\parameter)\right\|_{\ell_2}^2=\left\|\bB\bP\bP^{T} \left(\boldsymbol{\theta}^A(\parameter)\otimes \mathbf{c}\right)-\hat{\bB}\boldsymbol{\theta}^b(\parameter)\right\|_{\ell_2}\notag\\
&=\left\|\bQ\bR \bP^T \left(\boldsymbol{\theta}^A(\parameter)\otimes \mathbf{c}\right)-\bQ\bQ^T\hat{\bB}\boldsymbol{\theta}^b(\parameter)\right\|^2_{\ell_2}+\left\|(\mathbf{I}-\bQ\bQ^T)\hat{\bB}\boldsymbol{\theta}^b(\parameter)\right\|_{\ell_2}^2\notag\\
&=\left\|\underbrace{\bR \bP^T}_{s\times (\rbdim Q_A)} \left(\boldsymbol{\theta}^A(\parameter)\otimes \mathbf{c}\right)-\underbrace{\bQ^T\hat{\bB}}_{s\times Q_b}\boldsymbol{\theta}^b(\parameter)\right\|^2_{\ell_2}+\left\|\underbrace{(\mathbf{I}-\bQ\bQ^T)\hat{\bB}}_{\mN\times Q_b}\boldsymbol{\theta}^b(\parameter)\right\|_{\ell_2}^2.
\label{eq:Res-Alt2}
\end{align}
This  reformulation of the residual into their orthogonal components in \eqref{eq:Res-Alt2} will address the potential stagnation of the computed residual  and improve the robustness of the reduced model (also see \cite{chen2019robust}), and an offline-online strategy is readily available with the parameter-independent matrices $\bR \bP^T, \bQ^T\hat{\bB}, (\mathbf{I}-\bQ\bQ^T)\hat{\bB}$ pre-computed. \eqref{eq:Res-Alt2} is however not adopted for residual computation in this work, due to that the size of $(\mathbf{I}-\bQ\bQ^T)\hat{\bB}$ depends on the full dimension $\mN$, hence the overall efficiency will be compromised. 
\end{itemize}
 \subsection{Variant for residual computation in {\bf G-Res}}
 \label{app:variant:3}
 
For G-Res, one variant is related to computing the residual $\|\bA_{\parameter} \orthorbsmat^{\rbdim}\mathbf{c} - \dataVec_\parameter\|_{\mM_h}=\| {\bAr}_{\parameter} \mathbf{c}-{\bbr}_{\parameter}\|_{\ell_2}$ at $\mathbf{c}=\reducedcoeff^{\rbdim}(\parameter)$. This variant is based on 
    $\|\bA_{\parameter} \orthorbsmat^{\rbdim}\mathbf{c} - \dataVec_\parameter\|^2_{\mM_h}=(\bA_{\parameter} \orthorbsmat^{\rbdim}\mathbf{c} - \dataVec_\parameter)^T\mM_h(\bA_{\parameter} \orthorbsmat^{\rbdim}\mathbf{c} - \dataVec_\parameter)$
    and an offline-online procedure ensured by the parameter-separability assumption on $\bA_{\parameter}, \dataVec_\parameter$. This  works well in many cases, yet it is not recommended as a general strategy, as the conditioning of the task can be worsened due to $\textrm{cond}_2(B^TB)=(\textrm{cond}_2(B))^2$.

\section{Some details on computational complexity}
\label{app:CC}

\subsection{Computational complexity of Alg. \ref{alg:pG}} 
\label{app:CC:alg2}
Due to the sparsity of $\Upwind$ and $\{\absorpmat^q\}_{q=1}^{Q_a}$ as well as {the special structure of $\{\scatmat^q\}_{q=1}^{Q_s}$ as  shown in \eqref{eq:sys:scat},}
the cost to pre-compute $\{{\bArr}^q\}_{q=1}^{Q_A}$, $\{{\bbrr}^q\}_{q=1}^{Q_b}$ in Alg. \ref{alg:pG} is $O(\mN \rbdim^2 Q_A) + O(\mN \rbdim Q_b)$.
At a given $\parameter$, the cost to compute ${\bArr}_\parameter$ and ${\bbrr}_{\parameter}$ is $O(\rbdim^2 Q_A) + O(\rbdim Q_b)$, and solving \eqref{eq:rom:G:sol}, e.g., by a direct method, will be at a cost of $O(\rbdim^3)$. Therefore, the computational complexity of Alg. \ref{alg:pG} is \eqref{eq:CC:ROM-G} to solve the Galerkin-based ROM \eqref{eq:rom:alg:p:G} (or \eqref{eq:rom:G:sol}) for $N_\parameter$ parameter values in $\mP_{\textrm{interest}}$.

\subsection{Computational complexity of  Alg. \ref{alg:pLS:offline}-Alg. \ref{alg:pLS:online}}
\label{app:CC:alg34}

\noindent $\diamond$
{\bf Alg. \ref{alg:pLS:offline}:}  Due to the sparsity of $\mG$, $\Upwind$ and $\{\absorpmat^q\}_{q=1}^{Q_a}$ as well as the special structure of $\{\scatmat^q\}_{q=1}^{Q_s}$ as in \eqref{eq:sys:scat}, the cost to pre-compute  $\{\bAr^q\}_{q=1}^{Q_A}$, $\{\bbr^q\}_{q=1}^{Q_b}$ in Step 1 of  Alg. \ref{alg:pLS:offline} is $O(\mN \rbdim Q_A) + O(\mN Q_b)$.
Recall that the leading order costs of computing the reduced QR of a matrix with or without the column pivoting are the same, hence the cost of Step 2 is  $O(\mN Q_B^2)$, with $Q_B=\rbdim Q_A+Q_b$. It is easy to see the cost of Step 3 is $O(\mN s \rbdim Q_A) + O(\mN s Q_b)$. With these, the total cost of  Alg. \ref{alg:pLS:offline} is $O\big(\mN(Q_B^2+sQ_B)\big)$, 
which, with $s\leq Q_B$,
is further bounded above by 
$O\big(\mN Q_B^2\big)=O\big(\mN (m^2Q_A^2+Q_b^2)\big). $  

\smallskip
\noindent $\diamond$
{\bf Alg. \ref{alg:pLS:online}:} 
At a given $\parameter$, the costs of Steps 1-4 of Alg. \ref{alg:pLS:online} are  $O(s \rbdim Q_A)$,  $O(s \rbdim^2)$, $ O(s(\rbdim + Q_b))$, $ O(\rbdim^2)$, respectively. Therefore the total cost of Alg. \ref{alg:pLS:online} applied to $N_\parameter$ parameter values is 
$O\big(N_{\parameter}(s(\rbdim Q_A+ \rbdim^2+\rbdim + Q_b) +\rbdim^2)\big)$,
which, with  $s\leq Q_B$, is bounded above by 
$O\big(N_{\parameter} (\rbdim^3 Q_A+\rbdim^2 (Q_A^2+Q_b)+Q_b^2\big).$

By combining the costs above, one comes up with the  computational complexity as in \eqref{eq:CC:ROM-PG} to solve the LS-Petrov-Galerkin-based ROM  \eqref{eq:rom:alg:p:PG} (or \eqref{eq:rom:PG:noW}) for $N_\parameter$ parameter values in $\mP_{\textrm{interest}}$.

\end{appendices}

\bibliographystyle{plain}
\bibliography{references.bib, references1.bib}

\end{document}